\documentclass[12pt,twoside,a4paper]{amsart}
\usepackage{amssymb}
\usepackage{latexsym}
\usepackage{amsmath}
\usepackage{euscript}
\usepackage{graphics}

 \textwidth=15cm
 \textheight=23cm
 \hoffset=-10mm
 \voffset=-16mm

   \def\sH{{\mathfrak H}}   
      
   \def\sN{{\mathfrak N}}

      \def\dC{{\mathbb C}}

   \def\dN{{\mathbb N}}   
      \def\dR{{\mathbb R}}

\def\cD{{\mathcal D}}      \def\cF{{\mathcal F}}
   \def\cH{{\mathcal H}}   
      \def\cL{{\mathcal L}}
   \def\cN{{\mathcal N}}   \def\cO{{\mathcal O}}
\def\cP{{\mathcal P}}   \def\cQ{{\mathcal Q}}   
      \def\cU{{\mathcal U}}
      \def\cX{{\mathcal X}}
   \def\cZ{{\mathcal Z}}


\def\wt#1{{{\widetilde #1} }}
\def\wh#1{{{\widehat #1} }}

\def\bm\chi{\mbox{\boldmath$\chi$}}

\def\ran{{\rm ran\,}}

\def\dom{{\rm dom\,}}

\def\col{{\rm col\,}}
\def\dim{{\rm dim\,}}

\def\rank{{\rm rank\,}}

\let\xker=\ker \def\ker{{\xker\,}}

\def\cmr{{\dC \setminus \dR}}

\newtheorem{theorem}{Theorem}[section]
\newtheorem{proposition}[theorem]{Proposition}
\newtheorem{corollary}[theorem]{Corollary}
\newtheorem{lemma}[theorem]{Lemma}

\theoremstyle{definition}

\newtheorem{remark}[theorem]{Remark}
\newtheorem{definition}[theorem]{Definition}

\numberwithin{equation}{section}

\begin{document}

\title[Abstract interpolation problem]
{Abstract interpolation problem in Nevanlinna classes}
\author{Vladimir Derkach}

\address{Department of Mathematics \\
Donetsk National University \\
Universitetskaya str. 24 \\
83055 Donetsk \\
Ukraine} \email{derkach.v@gmail.com}

\date{\today}
\thanks{This research has been done partially while the author was visiting the Department of Mathematics of
Weizmann Institute of Science as a Weston Visiting Scholar }
\subjclass{Primary  47A57; Secondary  30E05, 47A06,  47B25, 47B32.} \keywords{Symmetric
relation, selfadjoint extension, reproducing kernel, abstract interpolation  problem,
boundary triplet, resolvent matrix, moment problem.}

\begin{abstract} The abstract interpolation  problem (AIP) in the Schur class was posed
V.~Katznelson, A. Kheifets and P. Yuditskii in 1987 as an extension of the
V.P.~Potapov's approach to interpolation problems. In the present paper an analog of the
AIP for Nevanlinna classes is considered. The description of solutions of the AIP  is
reduced to the description of L-resolvents of some model symmetric operator associated
with the AIP. The latter description is obtained by using the M.G.~Krein's theory of
L-resolvent matrices. Both regular and singular cases of the AIP are treated. The
results are illustrated by the following examples: bitangential interpolation problem,
full and truncated moment problems. It is shown that each of these problems can be
included into the general scheme of the AIP.
\end{abstract}

\maketitle
\section{Introduction}
The abstract interpolation  problem (AIP) was posed by V.~Katz\-nelson, A. Kheifets and
P. Yuditskii in~\cite{KKhYu} as an extension of the V.P.~Potapov's approach to
interpolation problems~\cite{KP}. In a sense the problem consists in contractive
"embedding" of some partial isometry $V$ acting in a structured Hilbert space
$\cH\oplus\cL$ into a model unitary operator acting in a space $\cH^w\oplus\cL$, where
$\cH^w$ is the de Branges-Rovnyak space corresponding to an operator valued function
(ovf) $w$ from the Schur class $S(\cL)$. A description of the set of all ovf $w\in
S(\cL)$ for which such an "embedding" is possible were reduced in~\cite{KKhYu} to the
description of all
scattering matrices of unitary extensions of a given partial isometry $V$ (\cite{ArGr}). 
In a number of papers it was shown that many problems of analysis, such that the
bitangential interpolation problem \cite{KhYu94}, moment problem~\cite{Kh96}, lifting
problem~\cite{Ku96}, and others can be included into the general scheme of AIP.

In the present paper we consider a parallel version of AIP for Nevanlinna class. The
class $N[\cL]$ consists of all ovf on $\dC_+\cup\dC_-$ with values in the set $[\cL]$ of
bounded linear operators in $\cL$ such that $m(\bar\lambda)=m(\lambda)^*$ and the kernel
\begin{equation}\label{eq:0.1}
    {\mathsf N}_\omega^m(\lambda)=\frac{m(\lambda)-m(\omega)^*}{\lambda-\bar\omega}
\end{equation}
is nonnegative on $\dC_+$. Then the kernel ${\mathsf N}_\omega^m(\lambda)$ is also
nonnegative on $\dC_+\cup\dC_-$.

In introduction we restrict ourselves to the case when $\dim\cL<\infty$ and identify
$\cL$ with the space $\dC^d$ where the standard basis is chosen. Then every ovf $m\in
N^{d\times d}:=N[\dC^d]$ can be considered as a $d\times d$ matrix valued function
(mvf). Let $\cH(m)$ be the reproducing kernel Hilbert space (see~\cite{Br77},
\cite{AG04}) which is characterized by the properties:
\begin{enumerate}
\item[(1)] ${\mathsf N}^{m}_\omega(\lambda)u\in\cH(m)$ for all $\omega\in\dC\setminus\dR$
and $u\in\dC^d$;
\item[(2)] for every $f\in\cH(m)$  the following identity holds
\begin{equation}\label{eq:0.2}
    \left\langle f(\cdot),{\mathsf
    N}^{m}_\omega(\lambda)u\right\rangle_{\cH({m})}=u^*f(\omega),\quad
\omega\in\dC\setminus\dR, u\in\dC^d.
\end{equation}
\end{enumerate}
The AIP in the class $N^{d\times d}$ can be formulated as follows.

Let $\cX$ be a complex linear space, let $B_1$, $B_2$ be linear operators in $\cX$, let
$C_1$, $C_2$ be linear operators from $\cX$ to $\dC^d$, and let $K$ be a nonegative
sesquilinear form on $\cX$ which satisfies the following identity
\begin{enumerate}
\item[(A1)]
$K(B_2h,B_1g)-K(B_1h,B_2g)=(C_1h,C_2g)_{\dC^d}-(C_2h,C_1g)_{\dC^d}$
\end{enumerate}
for every $h,g\in\ \cX$. Consider the following

\noindent {\bf Problem $AIP(B_1, B_2, C_1, C_2, K)$}. Let the data set $( B_1, B_2, C_1,
C_2, K)$ satisfies the assumption $(A1)$. Find a mvf $ m \in {N}^{d\times d}$ such that
for some linear mapping $F:X\to\cH(m)$ the following conditions hold:
\begin{enumerate}
\item[(C1)]
$(F{B_2h})(\lambda)-\lambda(F {B_1h})(\lambda)
=\left[\begin{array}{cc}I&-m(\lambda)\end{array}\right]\left[%
\begin{array}{c}
  C_1 h \\
C_2 h \\
\end{array}%
\right]$;
\item[(C2)] $\|F h\|_{\cH(m)}^2\leq K(h,h)$
\end{enumerate}
 for all $h\in \cX$.

Clearly $\ker K=\{h\in \cX:\,K(h,h)=0\}$ is a linear subspace of $\cX$.
Let $\cH$ be the completion of the factor-space $\wh{\cX}=\cX/\ker K$ endowed with the
scalar product
\begin{equation}\label{InnerP}
(\wh h,\wh g)_{\cH}=K(h,g), \quad \wh h=h+\ker K,\,\wh g=g+\ker K,\,\,h, g\in  \cX.
\end{equation}
It follows from (A1) that the linear relation
\[
    \wh{A}=\left\{\left\{%
\left[\begin{array}{c}
  \widehat{B_1h} \\
  C_1 h \\
\end{array}%
\right],\left[%
\begin{array}{c}
  \widehat{B_2h} \\
  C_2 h \\
\end{array}%
\right]\right\}: h\in\cX\right\}
\]
is symmetric in $\cH\oplus\dC^d$. The main result of the paper is the following
description of all the AIP solutions $m$.

\noindent{\bf Theorem 1.}\label{ThA} {\it Let the data set $(B_1,B_2,C_1,C_2,K)$
satisfies the assumption $(A1)$ and let $\ran C_2=\cL=\dC^d$. Then the Problem
$AIP(B_1,B_2,C_1,C_2,K)$ is solvable and the set of its solutions is parametrized by the
formula
\begin{equation}\label{eq:0.4}
\left[%
\begin{array}{c}
  m(\lambda) \\
  I_d \\
\end{array}%
\right]=\left[%
\begin{array}{cc}
  I_d & 0 \\
  \lambda & I_d \\
\end{array}%
\right]\left[%
\begin{array}{c}
  P_{\cL}(\wt{A}-\lambda)^{-1}I_{\cL} \\
      I_{\cL} \\
\end{array}%
\right](I_d+\lambda P_{\cL}(\wt{A}-\lambda)^{-1}I_{\cL})^{-1},
\end{equation}
where $\wt{A}$ ranges over the set of all selfadjoint extensions of $\wh{A}$ with the
exit in a Hilbert space $\wt{\cH}\oplus\cL\supset\cH\oplus\cL$. The corresponding linear
mapping $F:X\to\cH(\varphi,\psi)$ is given by
\begin{equation}\label{eq:0.51}
    (Fh)(\lambda)=P_\cL(\wt A-\lambda)^{-1}\wh h,\quad h\in X.
\end{equation}
}

Due to Theorem~1 the description of all solutions $m$ of the $AIP(B_1,B_2,C_1,C_2,K)$ is
reduced to the problem of description of all $\cL$-resolvents of the linear relation
$\wh A$. The latter description has been obtained by M.G. Kre\u{\i}n in~\cite{Kr49} (see
also~\cite{KrSa}). In order to apply this theory to the linear relation $\wh A$ we will
impose additional assumptions on the data set $(B_1$, $B_2$, $C_1$, $C_2$, $K)$.
\begin{enumerate}
\item[(A2)] $\dim\ker K<\infty$ and $\cX$ admits the representation
\begin{equation}\label{eq:0.61}
\cX=\cX_0\dotplus\ker K,
\end{equation}
such that $B_j\cX_0\subseteq\cX_0$ $( j=1,2)$.
\item[(A3)] $B_2=I_\cX$ and the operators $B_1|_{\cX_0}:\cX_0\subset\cH\to\cH$,
$C_1|_{\cX_0}$, $C_2|_{\cX_0}:\cX_0\subset\cH\to\cL$ are bounded.
\end{enumerate}
The continuations of the operators $B_1|_{\cX_0}$, $C_1|_{\cX_0}$, $C_2|_{\cX_0}$ will
be denoted by $\wt B_1\in[\cH]$, $\wt C_1$, $\wt C_2\in[\cH,\cL]$.

Denote by $\wt N^{d\times d}$ the set of Nevanlinna pairs $\{p,q\}$ of $d\times d$ mvfs
$p(\cdot)$, $q(\cdot)$ holomorphic on $\cmr$ such that:
\begin{enumerate}
\def\labelenumi{\rm (\roman{enumi})}
\item the kernel
$
 {\sf N}_\omega^{p,q} (\lambda)
  ={\displaystyle\frac{q(\bar\lambda)^*p(\bar\omega) -
   p(\bar\lambda)^*q(\bar\omega)}{\lambda-\bar\omega}}
$ is nonnegative on $\dC_+$; \item $q(\bar \lambda)^*p(\lambda)-p(\bar
\lambda)^*q(\lambda)=0$, $\lambda \in \cmr$; \item $0\in\rho(p(\lambda)-\lambda
q(\lambda))$, $\lambda \in\dC_\pm$.
\end{enumerate}
In the regular case ($\ker K=\{0\}$)
the set of $\cL$-resolvents $P_\cL(\wt A-\lambda)^{-1}|_\cL$ of $\wh A$ can be described
by the formula (see~\cite{Kr1}, \cite{KrSa})
\begin{equation}\label{eq:0.5}
  P_{\cL}(\wt{A}-\lambda)^{-1}I_{\cL} =(\wh w_{11}(\lambda)q(\lambda)+\wh w_{12}(\lambda)p(\lambda))(\wh
w_{21}(\lambda)q(\lambda)+\wh w_{22}(\lambda)p(\lambda))^{-1},
\end{equation}
where $\{p,q\}\in\wt N^{d\times d}$ 
and $\wh W=[ \wh w_{ij}(\lambda)]_{i,j=1}^2$
is an $\cL-$resolvent matrix  of $\wh A$ which can be calculated explicitly in terms of
the data set (see~\eqref{eq:4.81}). Combining~\eqref{eq:0.4} and~\eqref{eq:0.5} one
obtains the following

\noindent
{\bf Theorem 2.}\label{Lres} {\it Let the AIP data set $(B_1$, $B_2$, $C_1$,
$C_2$, $K)$ satisfy $(A1)$, $(A3)$,  $\ker K=\{0\}$ and $\ran C_2=\cL=\dC^d$
and let 
\begin{equation}\label{eq:0.6}
    \Theta(\lambda)=\begin{bmatrix} \theta_{11}(\lambda) &
\theta_{12}(\lambda)\\\theta_{21}(\lambda) &
\theta_{22}(\lambda)\end{bmatrix}=I_{\cL\oplus\cL}-\lambda \begin{bmatrix} \wt C_1\\\wt
C_2\end{bmatrix}
    (I_\cH-\lambda \wt B_1)^{-1}\begin{bmatrix}  -\wt C_2^+ &\ \wt C_1^+\end{bmatrix}.
\end{equation}
 Then the formula
\begin{equation}\label{eq:0.7}
    m(\lambda)=(\theta_{11}(\lambda)q(\lambda)+\theta_{12}(\lambda)p(\lambda))
    (\theta_{21}(\lambda)q(\lambda)+\theta_{22}(\lambda)p(\lambda))^{-1},
\end{equation}
 establishes the 1-1 correspondence between the set of all  solutions
$m$ of the $AIP$ and the set of all equivalence classes of Nevanlinna pairs
$\{p,q\}\in\wt N(\cL)$. }

The operators $\wt C_1^+$, $\wt C_2^+$ in~\eqref{eq:0.6} are adjoint operators to $\wt
C_1$, $\wt C_2:\cH\to\cL$, that is
\[
(\wt C_j^+u,h)_\cH=(u,\wt C_jh)_\cL\quad (j=1,2,h\in\cH,u\in\cL).
\]
Let the matrix $J\in\dC^{2d\times 2d}$ be given by
\[
J=\begin{bmatrix} 0 & -iI_d\\
                    iI_d & 0
\end{bmatrix}
\]
The mvf $\Theta(\lambda)$ belongs to the Potapov class (see~\cite{Pot}), i.e.
$\Theta(\lambda)$ has the following $J$-property
\[
\frac{J-W(\lambda)JW(\lambda)^*}{\lambda-\bar\lambda}\ge 0\quad \mbox{ for all }
\lambda\in\dC\setminus\dR.
\]

 In the singular case the
formula~\eqref{eq:0.5} gives a description of all $\cL$-resolvents of the linear
relation
\[
A_0=\left\{\left\{\left[%
\begin{array}{c}
  B_1 h \\
  C_1 h \\
\end{array}%
\right],\left[%
\begin{array}{c}
  B_2 h \\
  C_2 h \\
\end{array}%
\right]\right\}:h\in\cX_0\right\}.
\]
To obtain a description of all $\cL$-resolvents of $\wh A(\supset A_0)$ one should
consider in~\eqref{eq:0.5} only those Nevanlinna pairs $\{p,q\}\in\wt N(\cL)$ for which
$\wt A\supset\wh A(\supset A_0)$. We show that after the replacement in~\eqref{eq:0.7}
of $\Theta(\lambda)$ by the
 $\Theta(\lambda)V$, where $V$ is an appropriate $J$-unitary
matrix, the formula~\eqref{eq:0.7} gives a description of all the solutions of the AIP
when $\{p,q\}$ ranges over the set of all Nevanlinna pairs of the form
\begin{equation}\label{eq:pq}
p(\lambda)=\begin{bmatrix} \wt 0_\nu & 0\\ 0 & p_1(\lambda)\end{bmatrix},\quad
q(\lambda)=\begin{bmatrix} \wt I_\nu & 0\\ 0 & q_1(\lambda)\end{bmatrix},\quad
\{p_1,q_1\}\in\wt N^{d-\nu},
\end{equation}
where $\nu=\dim C\ker K$.

All these results are formulated in the paper in a more general situation, when the mvf
$m$ is replaced by a Nevanlinna pair $\{\varphi,\psi\}$. Moreover, we do not suppose, in
general, that $\dim\cL<\infty$.

The paper is organized as follows. In Section 2 we recall the definition of the class
$\wt N(\cL)$ of Nevanlinna pairs from~\cite{ABDS93},~\cite{DHMS}. To each selfadjoint
linear relation $\wt A$ and a scale spaces $\cL$ we associate a Nevanlinna pair
$\{\varphi,\psi\}$ by the formula
\begin{equation}\label{eq:0.11}
   \psi(\lambda):=P_\cL(\wt A-\lambda)^{-1}|_{\cL},\quad
   \varphi(\lambda):=I_\cL+\lambda P_\cL(\wt A-\lambda)^{-1}|_{\cL},
    \lambda\in\rho(\wt A).
\end{equation}
Conversely, given a Nevanlinna pair $\{\varphi,\psi\}$ normalized  by the condition
$p(\lambda)-\lambda q(\lambda)=I_d$ we construct a functional model for a selfadjoint
linear relation $\wt A(\varphi,\psi)$, such that the pair $\{\varphi,\psi\}$ is related
to $\wt A(\varphi,\psi)$ via~\eqref{eq:0.11}. In the case when the pair
$\{\varphi,\psi\}$ is equivalent to a pair $\{I_d,m\}$ with $m\in N[\cL]$, functional
model $A(m)$ for this symmetric operator was given in~\cite{ABDS93} (see
also~\cite{DM95}). Conditions when the model $A(\varphi,\psi)$ is reduced to $A(m)$ are
discussed. In Sections 3 and 4 we formulate the AIP in the classes $N[\cL]$ and $\wt
N(\cL)$ and give a complete description of its solutions under some additional
restrictions on the data set both in the regular and singular cases. The results of the
paper are illustrated in Section 5 with an example of bitangential interpolation
problems in the classes $N^{d\times d}$ and $\wt N^{d\times d}$, reduced there to the
AIP with appropriately chosen data set. These problems have been studied earlier
in~\cite{Nud77}, \cite{ABDS93},  \cite{Kh1}, \cite{Dym89}, \cite{Dym01},
\cite{BolDym98}.

Mention, that the Arov and Grossman's description of scattering matrices of unitary
extensions of an isometry $V$ in~\cite{ArGr} used in the Schur type AIP is an analog of
M.G. Kre\u{\i}n's description~\eqref{eq:0.5} of $\cL$-resolvents of a symmetric
operator~\cite{Kr1}. One of the goals of this paper is the formulation of the AIP, where
the M.G.~Kre\u{\i}n's formula~\eqref{eq:0.5} works directly. In particular, we use the
example of the full moment problem to show that the reduction of this problem to the
Nevanlinna type AIP is more natural and simpler than that in~\cite{Kh96}, where the
reduction of the moment problem to the Schur type AIP was performed.

Another goal of the paper was to elaborate the operator approach to singular AIP. This
approach is illustrated with an example of singular truncated moment problem, where we
discuss the results of~\cite{Bol96} and explain them from our point of view.

The paper is dedicated to the centennial of M.G.~Kre\u{\i}n.

\section{Functional model of a selfadjoint linear relation}
\subsection{Nevanlinna pairs} Let $\cL$ be a Hilbert space.
\begin{definition}
\label{npair}  A pair $\{\Phi,\Psi\}$ of $[{\cL}]$-valued functions $\Phi(\cdot)$,
$\Psi(\cdot)$ holomorphic on $\cmr$ is said to be a \textit{ Nevanlinna pair} if:
\begin{enumerate}
\def\labelenumi{\rm (\roman{enumi})}
\item the kernel
\[
 {\sf N}_\omega^{\Phi,\Psi} (\lambda)
  =\frac{\Psi(\bar\lambda)^*\Phi(\bar\omega) -
   \Phi(\bar\lambda)^*\Psi(\bar\omega)}{\lambda-\bar\omega},
\quad
 \lambda,\omega \in \dC_+
\]
is nonnegative on $\dC_+$; \item $\Psi(\bar \lambda)^*\Phi(\lambda)-\Phi(\bar
\lambda)^*\Psi(\lambda)=0$, $\lambda \in \cmr$; \item
$0\in\rho(\Phi(\lambda)-\lambda\Psi(\lambda))$, $\lambda \in\dC_\pm$. \end{enumerate}
\end{definition}

Two Nevanlinna pairs $\{\Phi,\Psi\}$ and $\{\Phi_1,\Psi_1\}$ are said to be
\textit{equivalent}, if $\Phi_1 (\lambda)=\Phi(\lambda)\chi(\lambda)$ and
$\Psi_1(\lambda)=\Psi(\lambda)\chi(\lambda)$ for some operator function
$\chi(\cdot)\in[\cH]$, which is holomorphic and invertible on $\dC_+\cup\dC_-$. The set
of all equivalence classes of Nevanlinna pairs in $\cL$ will be denoted by $\wt N(\cL)$.
We will write, for short, $\{\Phi,\Psi\}\in \wt N(\cL)$ for Nevanlinna pair
$\{\Phi,\Psi\}$.

 A Nevanlinna pair $\{\Phi,\Psi\}$ will be said to be \textit{normalized} if
$\Phi(\lambda)-\lambda\Psi(\lambda)=I_\cH$. Clearly, every Nevanlinna pair
$\{\Phi,\Psi\}$ is equivalent to the unique normalized Nevanlinna pair
$\{\varphi,\psi\}$ given by
\begin{equation}\label{NormNP}
\varphi(\lambda)=\Phi(\lambda)(\Phi(\lambda)-\lambda\Psi(\lambda))^{-1},\quad
\psi(\lambda)=\Psi(\lambda)(\Phi(\lambda)-\lambda\Psi(\lambda))^{-1}.
\end{equation}
The set $\wt N(\cL)$ can be identified with the set of Nevanlinna families
(see~\cite{DHMS})
\begin{equation}\label{NevFam}
    \tau(\lambda)=\{\{\Phi(\lambda)u,
    \Psi(\lambda)u\}:\,u\in\cL\},\quad\{\Phi,\Psi\}\in\wt N(\cL).
\end{equation}
Define the class $ N(\cL)$ as the set of all Nevanlinna pairs $\{\Phi,\Psi\}\in \wt
N(\cL)$ such that $\ker\Phi(\lambda)=\{0\}$ for some (and hence for all)
$\lambda\in\cmr$. The set $N[\cL]$ can be embedded in $ N(\cL)$ via the mapping
\[
m\in N[\cL]\mapsto \{I_\cL,m\}\in N(\cL).
\]
\subsection{Nevanlinna pair corresponding to a selfadjoint linear relation and a
scale.} Let $\sH$, $\cL$ be Hilbert spaces, let $\wt A$ be a selfadjoint linear relation
in $\sH\oplus\cL$ and let $P_\cL$ be the orthogonal projection onto the scale space
$\cL$. Define the operator valued functions
\begin{equation}\label{eq:1.1}
   \psi(\lambda):=P_\cL(\wt A-\lambda)^{-1}|_{\cL},\quad
   \varphi(\lambda):=I_\cL+\lambda P_\cL(\wt A-\lambda)^{-1}|_{\cL},
    \lambda\in\rho(\wt A).
\end{equation}

\begin{proposition}
\label{pr:1.1} The pair of operator valued functions
$\{\varphi,\psi\}$, associated with a selfadjoint linear relation
$\wt A$ and the scale $\cL$ is a normalized Nevanlinna pair.
\end{proposition}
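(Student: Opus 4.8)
The plan is to check the normalization and then the three defining conditions of Definition~\ref{npair}, reducing everything to two standard facts about the resolvent $R_\lambda:=(\wt A-\lambda)^{-1}$ of the selfadjoint relation $\wt A$: that $R_\lambda^*=R_{\bar\lambda}$, and that the resolvent identity $R_\lambda-R_\omega=(\lambda-\omega)R_\lambda R_\omega$ holds. Since $\wt A=\wt A^*$ one has $\cmr\subseteq\rho(\wt A)$, so $R_\lambda$ is a bounded everywhere-defined operator on $\sH\oplus\cL$ depending holomorphically on $\lambda\in\cmr$; hence $\psi$ and $\varphi$ are $[\cL]$-valued and holomorphic on $\cmr$. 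The normalization $\varphi(\lambda)-\lambda\psi(\lambda)=I_\cL$ is immediate from~\eqref{eq:1.1}, and since $I_\cL$ is boundedly invertible this at once gives condition (iii), that is $0\in\rho(\varphi(\lambda)-\lambda\psi(\lambda))$.

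Writing $\iota\colon\cL\to\sH\oplus\cL$ for the natural embedding, so that $P_\cL=\iota^*$, $\iota\iota^*=P_\cL$ and $\iota^*\iota=I_\cL$, I record next the two symmetry relations. From selfadjointness, $\psi(\bar\lambda)^*=(\iota^*R_{\bar\lambda}\iota)^*=\iota^*R_\lambda\iota=\psi(\lambda)$, and consequently $\varphi(\bar\lambda)^*=\varphi(\lambda)$ as well. Condition (ii) then reduces to $\psi(\lambda)\varphi(\lambda)-\varphi(\lambda)\psi(\lambda)=0$, which holds trivially because $\varphi(\lambda)=I_\cL+\lambda\psi(\lambda)$ commutes with $\psi(\lambda)$.

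The substance is condition (i). Using the two symmetry relations, the numerator of ${\sf N}_\omega^{\varphi,\psi}(\lambda)$ equals $\psi(\lambda)\varphi(\bar\omega)-\varphi(\lambda)\psi(\bar\omega)$. Expanding through $\varphi=I_\cL+\mu\,\psi$ and using $\iota\iota^*=P_\cL$, I obtain
\[
\psi(\lambda)\varphi(\bar\omega)-\varphi(\lambda)\psi(\bar\omega)
=\iota^*(R_\lambda-R_{\bar\omega})\iota-(\lambda-\bar\omega)\,\iota^*R_\lambda P_\cL R_{\bar\omega}\iota .
\]
The resolvent identity $R_\lambda-R_{\bar\omega}=(\lambda-\bar\omega)R_\lambda R_{\bar\omega}$ then factors out $\lambda-\bar\omega$ and collapses the two terms, so that, writing $P_\sH:=I-P_\cL$ for the orthogonal projection onto $\sH$,
\[
{\sf N}_\omega^{\varphi,\psi}(\lambda)=\iota^*R_\lambda(I-P_\cL)R_{\bar\omega}\iota=\iota^*R_\lambda P_\sH R_{\bar\omega}\iota .
\]
Nonnegativity on $\dC_+$ is now a Gram computation: given $\lambda_1,\dots,\lambda_n\in\dC_+$ and $u_1,\dots,u_n\in\cL$, put $w_k:=R_{\bar\lambda_k}\iota u_k$; using $R_{\bar\lambda_k}=R_{\lambda_k}^*$, the associated quadratic form $\sum_{i,j}\langle{\sf N}_{\lambda_j}^{\varphi,\psi}(\lambda_i)u_j,u_i\rangle$ equals $\langle P_\sH\sum_j w_j,\sum_i w_i\rangle=\big\|P_\sH\sum_k w_k\big\|^2\ge 0$.

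The only point needing care is that $\wt A$ is a linear relation rather than an operator, so I would invoke the relation-theoretic forms of $R_\lambda^*=R_{\bar\lambda}$ and of the resolvent identity; these are standard and harmless, since for $\lambda\in\cmr$ the resolvent $R_\lambda$ is a genuine bounded operator (annihilating $\mul\wt A$). The decisive step is the cancellation in condition (i): it is exactly the resolvent identity that converts $R_\lambda P_\cL R_{\bar\omega}$ into the orthogonal projection $P_\sH=I-P_\cL$, turning the kernel into a manifestly nonnegative Gram form.
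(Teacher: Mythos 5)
Your proof is correct and follows essentially the same route as the paper: the central step is exactly the paper's identity \eqref{eq:1.31}, namely that the resolvent identity and $R_\lambda^*=R_{\bar\lambda}$ collapse the kernel to $P_\cL R_\lambda P_\cH R_{\bar\omega}|_\cL$, from which nonnegativity follows as a Gram form. You merely make explicit a few points the paper leaves implicit --- the quadratic-form computation $\bigl\|P_\sH\sum_k w_k\bigr\|^2\ge 0$, the verification of property (ii) via commutativity of $\varphi(\lambda)=I_\cL+\lambda\psi(\lambda)$ with $\psi(\lambda)$, and the relation-theoretic form of the resolvent identities --- all of which are accurate.
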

\begin{proof}
Consider the kernel
\begin{equation}\label{eq:1.3}
{\mathsf N}_\omega^{{\varphi}{\psi}}(\lambda)=\frac{\psi(\bar\lambda)^*\phi(\bar\omega)
-\phi(\bar\lambda)^*\psi(\bar\omega)}{\lambda-\bar\omega},
\quad\lambda,\omega\in\dC_+\cup\dC_-.
\end{equation}
It follows from~\eqref{eq:1.1}-\eqref{eq:1.3} that
\begin{equation}\label{eq:1.31}
\begin{split}
{\mathsf
N}_\omega^{{\varphi}{\psi}}(\lambda)&=\frac{\psi(\lambda)-\psi(\omega)^*}{\lambda-\bar\omega}
-\psi(\lambda)\psi(\omega)^*\\
&=P_\cL\frac{R_\lambda-R_{\bar\omega}}{\lambda-\bar\omega}|_\cL-
P_\cL R_\lambda P_\cL R_{\bar\omega}|_\cL\\
&=P_\cL R_\lambda P_\cH R_{\bar\omega}|_\cL
\end{split}
\end{equation}
and hence the kernel ${\mathsf N}_\omega^{{\varphi}{\psi}}(\lambda)$ is nonnegative.

The property (ii) is easily checked, $\varphi(\lambda)-\lambda\psi(\lambda)=I_\cL$
and, hence, the pair 
$\{\phi,\psi\}$  is a normalized Nevanlinna pair.
\end{proof}
\begin{definition}\label{def:1.1}
The pair of operator valued functions determined by~\eqref{eq:1.1} will be called the
Nevanlinna pair corresponding to the selfadjoint linear relation $\wt A$ and the scale
$\cL$.
\end{definition}
\begin{remark}\label{rem:1.1}
Definition~\ref{def:1.1} is inspired by the notion of the Weyl family of a symmetric
operator corresponding to a boundary relation, see~\cite{DHMS06}. Namely, the Nevanlinna
pair $\{\varphi,\psi\}$ determined by~\eqref{eq:1.1} generates via~\eqref{NevFam} the
Weyl family of the symmetric linear relation $S=\wt A\cap(\cH\oplus\cH)$, corresponding
to the boundary relation
\[
\Gamma=
 \left\{\,
 \left\{ \begin{bmatrix} f \\ f' \end{bmatrix},
         \begin{bmatrix} h' \\ h \end{bmatrix}
 \right\} :\,
 \left\{ \begin{bmatrix} f \\ h \end{bmatrix},
         \begin{bmatrix} f' \\ h' \end{bmatrix}
 \right\}
 \in \wt A
 \,\right\}.
\]
The proof of Proposition~\ref{pr:1.1} is contained in~\cite[Theorem~3.9]{DHMS06}.
Moreover, it is shown in~\cite{DHMS06} that the converse is also true, every Nevanlinna
family generates via~\eqref{NevFam} the Weyl family of a symmetric linear operator $S$.
In the case when the pair $\{\varphi,\psi\}$ is equivalent to a pair $\{I_\cL,m\}$ with
$m\in N[\cL]$, functional model $A(m)$ for this symmetric operator $S$ was given
in~\cite{ABDS93} (see also~\cite{DM95}).
\end{remark}

In the following theorem we give another functional model of a selfadjoint linear
relation $\wt A$ recovered from a Nevanlinna pair. Consider the reproducing kernel
Hilbert space $\cH(\Phi,\Psi)$, which is characterized by the properties:
\begin{enumerate}
\item[(1)] ${\mathsf N}^{\Phi\Psi}_\omega(\lambda)u\in\cH(\Phi,\Psi)$ for all $\omega\in\dC\setminus\dR$
and $u\in\cL$;
\item[(2)] for every $f\in\cH(\Phi,\Psi)$  the following identity holds
\begin{equation}\label{eq:1.4}
    \left\langle f(\cdot),{\mathsf
    N}^{\Phi\Psi}_\omega(\lambda)u\right\rangle_{\cH({\Phi,\Psi})}=(f(\omega),u)_\cL,\quad
\omega\in\dC\setminus\dR, u\in\cL.
\end{equation}
\end{enumerate}
It follows from~\eqref{eq:1.4} that the evaluation operator $E(\lambda):f\mapsto
f(\lambda)$ $(f\in\cH({\Phi,\Psi}))$ is a bounded operator from $\cH(\Phi,\Psi)$ to
$\cL$.
\begin{theorem}\label{thm:1.2}
Let $\{\Phi,\Psi\} \in \wt{N}(\cL)$. Then the linear relation
\begin{equation}\label{eq:1.5}
    \wt A({\Phi,\Psi})=\left\{\left\{\left[%
\begin{array}{c}
  f \\
  u \\
\end{array}%
\right],\left[%
\begin{array}{c}
  f' \\
  u' \\
\end{array}%
\right]\right\}:\begin{array}{c}
       f,f'\in \cH({\Phi,\Psi}),\,u,u'\in\cL, \\
                  f'(\lambda)-\lambda f(\lambda)=\Phi(\bar\lambda)^*u-\Psi(\bar\lambda)^*u' \\
                \end{array}  \right\}
\end{equation}
is a selfadjoint linear relation in $\cH(\Phi,\Psi)\oplus\cL$ and the normalized pair
$\{\varphi,\psi\}$ given by~\eqref{NormNP} is the Nevanlinna pair corresponding to $\wt
A(\Phi,\Psi)$ and $\cL$.
\end{theorem}
\begin{proof} {\it Step 1.} Let us show that $\wt A({\Phi,\Psi})$ contains
vectors of the form
\begin{equation}\label{eq:1.6}
\{ F_\omega u,F'_\omega u\}:=   \left\{\left[%
\begin{array}{c}
  {\mathsf N}_\omega(\cdot)u \\
  \Psi(\bar\omega) \\
\end{array}%
\right],\left[%
\begin{array}{c}
 \bar \omega{\mathsf N}_\omega(\cdot)u \\
  \Phi(\bar\omega)u \\
\end{array}%
\right]\right\},\quad u\in\cL,\,\,\omega\in\dC_+\cup\dC_-,
\end{equation}
where ${\mathsf N}_\omega(\cdot)={\mathsf N}^{\Phi\Psi}_\omega(\cdot)$ and the
restriction $\wt A'$ of $\wt A({\Phi,\Psi})$ to the span of vectors $\{ F_\omega
u,F'_\omega u\}$ is a symmetric linear relation.

Indeed, it follows from~\eqref{eq:1.5} and the equality
\[
(\bar \omega-\lambda){\mathsf N}_\omega(\lambda)u=\Phi(\bar\lambda)^*\Psi(\bar\omega) -
   \Psi(\bar\lambda)^*\Phi(\bar\omega)
\]
that $\{ F_\omega u,F'_\omega u\}\in\wt A({\Phi,\Psi})$.

For arbitrary $\omega_j\in\dC\setminus\dR$, $u_j\in\cL$ $(j=1,2)$
one obtains
\[
\begin{split}
\left\langle\bar\omega_1 \right.&\left.{\mathsf
N}_{\omega_1}(\cdot)u_1,{\mathsf
N}_{\omega_2}(\cdot)u_2\right\rangle_{\cH(\Phi,\Psi)}
-\left\langle{\mathsf N}_{\omega_1}(\cdot)u_1,\bar\omega_2{\mathsf
N}_{\omega_2}(\cdot)u_2\right\rangle_{\cH(\Phi,\Psi)}\\
&+(\Phi(\bar\omega_1)u_1,\Psi(\bar\omega_2)u_2)_\cL-(\Psi(\bar\omega_1)u_1,\Phi(\bar\omega_2)u_2)_\cL\\
&=(\bar\omega_1-\omega_2)({\mathsf N}_{\omega_1}(\omega_2)u_1,u_2)_\cL
-((\Phi(\bar\omega_2)^*\Psi(\bar\omega_1)-\Psi(\bar\omega_2)^*\Phi(\bar\omega_1))u_1,u_2)_\cL\\
&=0,
\end{split}
\]
therefore, $\wt A'$ is symmetric in $\cH(\Phi,\Psi)\oplus\cL$.

{\it Step 2.} Let us show that $\ran(\wt A'-\lambda)$ is dense in
$\cH(\Phi,\Psi)\oplus\cL$ for $\lambda\in\dC\setminus\dR$. Choose
the vector $\{ F_{\omega}u,F'_\omega u\}$ with $\omega=\bar\lambda$.
Then
\begin{equation}\label{eq:1.61}
  \left\{\left[%
\begin{array}{c}
  {\mathsf N}_{\bar\lambda}(\cdot)u \\
  \Psi(\lambda)u \\
\end{array}%
\right],\left[%
\begin{array}{c}
 0 \\
  \Phi(\lambda)u -\lambda \Psi(\lambda)u\\
\end{array}%
\right]\right\}\in\wt A'-\lambda.
\end{equation}
Since $\ran(\Phi(\lambda)u -\lambda \Psi(\lambda))=\cL$ one obtains
$0\oplus\cL\subset\ran(\wt A'-\lambda)$. Taking $\wh F_{\omega}u$ with
$\omega\ne\bar\lambda$ one obtains from~\eqref{eq:1.6}
\begin{equation}\label{eq:1.62}
    \left\{\left[%
\begin{array}{c}
  {\mathsf N}_{\omega}(\cdot)u \\
  \Psi(\bar\omega)u \\
\end{array}%
\right],\left[%
\begin{array}{c}
 (\bar\omega-\lambda){\mathsf N}_{\omega}(\cdot)u  \\
  \Phi(\bar\omega)u -\lambda \Psi(\bar\omega)u\\
\end{array}%
\right]\right\}\in\wt A'-\lambda
\end{equation}
and, hence, $\begin{bmatrix}{\mathsf N}_{\omega}(\cdot)u \\0
\end{bmatrix} \in\ran(\wt A'-\lambda)$ for all $\omega\ne\bar\lambda$.
Due to the properties 1) and 2) of $\cH(\Phi,\Psi)$ one obtains the
statement.

{\it Step 3.} Let us show that $\wt A({\Phi,\Psi})=(\wt A')^*$. Indeed, for every vector
\[
\wh F= \{F,F'\}=  \left\{\left[%
\begin{array}{c}
  f(\cdot) \\
  u \\
\end{array}%
\right],\left[%
\begin{array}{c}
 f'(\cdot) \\
 u' \\
\end{array}%
\right]\right\}\in\wt A({\Phi,\Psi}),\quad f,f'\in \cH(\Phi,\Psi),\,u,u'\in\cL,
\]
and $\omega\in\dC\setminus\dR$, $v\in\cL$ it follows
from~\eqref{eq:1.5} that
\[
\begin{split}
 \left\langle F',F_{\omega}v\right\rangle_{\cH(\Phi,\Psi)}-
  \left\langle F,F'_{\omega}v\right\rangle_{\cH(\Phi,\Psi)}&=
 \left\langle f',{\mathsf
N}_{\omega}(\cdot)v\right\rangle_{\cH(\Phi,\Psi)} -\left\langle
f,\bar\omega{\mathsf
N}_{\omega}(\cdot)v\right\rangle_{\cH(\Phi,\Psi)}\\
&+(u',\Psi(\bar\omega)v)_\cL-(u,\Phi(\bar\omega)v)_\cL\\
&=(f'(\omega)-\omega f(\omega)+\Psi(\bar\omega)^*u'-\Phi(\bar\omega)^*u,v)_\cL\\
&=0
\end{split}
\]
Hence $\wh F\in (\wt A')^*$ and $\wt A({\Phi,\Psi})\subset(\wt A')^*$.

Conversely, if
\[
\left\langle f',{\mathsf N}_{\omega}(\cdot)h\right\rangle_{\cH(\Phi,\Psi)} -\left\langle
f,\bar\omega{\mathsf
N}_{\omega}(\cdot)v\right\rangle_{\cH(\Phi,\Psi)}+(u',\Psi(\bar\omega)v)_\cL-(u,\Phi(\bar\omega)v)_\cL=0
\]
for some $f,f'\in \cH(\varphi,\psi)$, $u,u'\in\cL$ and all
$\omega\in\dC\setminus\dR$, $v\in\cL$, then
\[
f'(\omega)-\omega f(\omega) -(\Phi(\bar\omega)^*u-\Psi(\bar\omega)^*u')=0
\]
and, hence, $\wh F=\left\{\left[%
\begin{array}{c}
  f(\cdot) \\
  u \\
\end{array}%
\right],\left[%
\begin{array}{c}
 f'(\cdot) \\
 u' \\
\end{array}%
\right]\right\}\in\wt A({\Phi,\Psi})$. This proves that $(\wt A')^*\subset \wt
A({\Phi,\Psi})$.

{\it Step 4.} And finally, in view of~\eqref{eq:1.61} and the property (ii) of
Definition~\ref{npair} one obtains
\[
\begin{split}
\psi(\lambda)&=P_{\cL}(\wt
A({\Phi,\Psi})-\lambda)^{-1}|_\cL
=\Psi(\lambda)(\Phi(\lambda)-\lambda\Psi(\lambda))^{-1},\\
\varphi(\lambda)&=I_{\cL}+\lambda\psi(\lambda)
=\Phi(\lambda)(\Phi(\lambda)-\lambda\Psi(\lambda))^{-1}.
\end{split}
\]
Therefore, the pair  $\{\varphi,\psi\}$ is a normalized Nevanlinna pair  corresponding
to the linear relation $\wt A({\Phi,\Psi})$ and the scale $\cL$.
\end{proof}

\begin{remark}
In notations of~\cite{DHMS06} the pair $\{\Phi,\Psi\}$ generates via~\eqref{NevFam} the
Weyl family of the symmetric operator
\[
S(\Phi,\Psi)=\{\{f,f'\}:\,f,f'\in\cH(\Phi,\Psi),f'(\lambda)-\lambda f(\lambda)=0\}
\]
corresponding to the boundary relation
\[
\Gamma=\left\{\left\{\left[%
\begin{array}{c}
  f \\
  f' \\
\end{array}%
\right],\left[%
\begin{array}{c}
  u' \\
  u \\
\end{array}%
\right]\right\}:\begin{array}{c}
       f,f'\in \cH({\Phi,\Psi}),\,u,u'\in\cL, \\
                  f'(\lambda)-\lambda f(\lambda)=\Phi(\bar\lambda)^*u-\Psi(\bar\lambda)^*u' \\
                \end{array}  \right\}.
\]
\end{remark}
\begin{remark}
Mention that the linear space
\[
\sN_{\bar{\omega}}(T):=\{{\mathsf N}_{\omega}(\cdot)u:u\in\cL\}
\]
in general is not closed, since
\[
\begin{split}
({\mathsf N}_{\omega}(\cdot)u,{\mathsf
N}_{\omega}(\cdot)u)_{\cH(\varphi,\psi)}
&=({\mathsf N}_{\omega}(\omega)u,u)_{\cL}\\
&=\left(\frac{\Phi(\bar\omega)^*\Psi(\bar\omega)-\Psi(\bar\omega)^*\Phi(\omega)}{\omega-\bar{\omega}}
u,u\right)
\end{split}
\]
and the operator ${\mathsf N}_{\omega}(\omega)$ not necessarily is
boundedly invertible. If, however, $0\in \rho({\mathsf
N}_{\omega}(\omega))$ then ${\sN}_{\bar\omega}(T)$ is closed. Recall
that in this case $0\in\rho({\mathsf N}_{\lambda}(\lambda))$ for all
$\lambda\in\dC\backslash\dR$ and, hence, all the subspace
$\sN_{\lambda}(\bar\lambda)$ are closed.

Let, the ovf $\gamma(\lambda):\cL\rightarrow\cH$ be defined by
\begin{equation}\label{eq:1.7}
   \gamma(\lambda):=P_{\cH}(\wt{A}-\lambda)^{-1}|_{\cL}\quad (\lambda\in\rho(\wt A)).
\end{equation}
\end{remark}
\begin{proposition}\label{pr:1.3}
Let $\wt{A}$ be a selfadjoint linear relation in $\cH\oplus\cL$ and let
$\{\varphi,\psi\}$ be the normalized Nevanlinna pair given by~\eqref{eq:1.1}. Then the
following identity holds
\begin{equation}\label{eq:1.8}
 {\mathsf N}_{\omega}^{\varphi\psi}(\lambda)
=\gamma({\bar\lambda})^*\gamma({\bar\omega}).
\end{equation}
\end{proposition}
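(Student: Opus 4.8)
The plan is to reduce everything to the computation already carried out in the proof of Proposition~\ref{pr:1.1}. Writing $R_\lambda=(\wt A-\lambda)^{-1}$ for $\lambda\in\cmr$, the chain of equalities~\eqref{eq:1.31} established that
\[
{\mathsf N}_\omega^{\varphi\psi}(\lambda)=P_\cL R_\lambda P_\cH R_{\bar\omega}|_\cL,\qquad \lambda,\omega\in\cmr,
\]
so it suffices to recognize the right-hand side as $\gamma(\bar\lambda)^*\gamma(\bar\omega)$. Since the nonnegativity and the identity in~\eqref{eq:1.31} were proved without any extra hypotheses beyond selfadjointness of $\wt A$, I may invoke this formula directly.

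Next I would compute the adjoint $\gamma(\bar\lambda)^*$. By~\eqref{eq:1.7}, $\gamma(\bar\lambda)=P_\cH R_{\bar\lambda}|_\cL$; that is, $\gamma(\bar\lambda)$ is the composition of the canonical embedding $\cL\hookrightarrow\cH\oplus\cL$, the resolvent $R_{\bar\lambda}$, and the orthogonal projection $P_\cH$ onto the first component. Taking adjoints and using that the embedding of $\cL$ and the projection $P_\cL$ are mutually adjoint (and likewise for $\cH$), together with the resolvent identity $R_{\bar\lambda}^*=R_\lambda$ valid for the selfadjoint relation $\wt A$, one obtains
\[
\gamma(\bar\lambda)^*=P_\cL R_\lambda|_\cH,
\]
the restriction now being to the subspace $\cH\subset\cH\oplus\cL$.

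Finally I would multiply the two operators. Composing $\gamma(\bar\lambda)^*=P_\cL R_\lambda|_\cH$ with $\gamma(\bar\omega)=P_\cH R_{\bar\omega}|_\cL$, the factor $|_\cH$ following $P_\cH$ is exactly the orthogonal projection $P_\cH$ regarded as a self-map of $\cH\oplus\cL$, whence
\[
\gamma(\bar\lambda)^*\gamma(\bar\omega)=P_\cL R_\lambda P_\cH R_{\bar\omega}|_\cL={\mathsf N}_\omega^{\varphi\psi}(\lambda),
\]
which is the asserted identity~\eqref{eq:1.8}. The only point requiring care is the bookkeeping with the embedding/projection notation and the resolvent identity $R_{\bar\lambda}^*=R_\lambda$; once these are in place the statement follows immediately from~\eqref{eq:1.31}, so there is no substantial obstacle beyond this routine verification.
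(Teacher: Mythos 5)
Your proposal is correct and follows essentially the same route as the paper: both reduce the claim to the identity ${\mathsf N}_\omega^{\varphi\psi}(\lambda)=P_\cL R_\lambda P_\cH R_{\bar\omega}|_\cL$ established in~\eqref{eq:1.31}, and then factor the right-hand side as $\gamma(\bar\lambda)^*\gamma(\bar\omega)$ using $P_\cH^2=P_\cH$ and the identification $\gamma(\bar\lambda)^*=P_\cL R_\lambda|_\cH$ (which rests on $R_{\bar\lambda}^*=R_\lambda$ for the selfadjoint $\wt A$). Your write-up merely makes explicit the adjoint bookkeeping that the paper leaves implicit.
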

\begin{proof}
Indeed, it follows from~\eqref{eq:1.31} that the kernel ${\mathsf
N}_{\omega}^{\varphi\psi}(\lambda)$ for the normalized Nevanlinna pair takes the form
\[
 {\mathsf N}_{\omega}^{\varphi\psi}(\lambda)=(P_{\cL}R_{\lambda}P_{\cH})
(P_{\cH}R_{\bar{\omega}}|_{\cL})=\gamma({\bar\lambda})^*\gamma({\bar\omega}).
\]
\end{proof}
In general case one obtains
\[
\begin{split}
{\mathsf N}_{\omega}^{\Phi\Psi}(\lambda)
&=(\Phi(\bar\lambda)-\lambda\Psi(\bar\lambda))^*{\mathsf
N}_{\omega}^{{\varphi}{\psi}}(\lambda)
(\Phi(\bar\omega)-\bar\omega\Psi(\bar\omega))\\
&=(\Phi(\bar\lambda)-\lambda\Psi(\bar\lambda))^*\gamma(\bar{\lambda})^*\gamma(\bar{\omega})
(\Phi(\bar\omega)-\bar\omega\Psi(\bar\omega)).
\end{split}
\]
The following statement formulated in terms of boundary relations
can be found in~\cite[Lemma 4.1]{DHMS}
\begin{lemma}\label{lm:1.1}
Let $\wt{A}$ be a selfadjoint linear relation in $\cH\oplus\cL$, let $\{\varphi,\psi\}$
be the normalized Nevanlinna pair given by~\eqref{eq:1.1} and lel $\dim \cL<\infty$.
Then:
\begin{enumerate}
\item[(i)]
$\ker\psi(\lambda)={0}$ for $\lambda\in\dC\setminus\dR$ iff $P_\cL\,\dom\wt A$ is dense
in $\cL$;
\item[(ii)]
$\ker\varphi(\lambda)={0}$ for $\lambda\in\dC\setminus\dR$ iff $P_\cL\,\ran\wt A$ is
dense in $\cL$.
\end{enumerate}
\end{lemma}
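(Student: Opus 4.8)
The plan is to prove, for each fixed $\lambda\in\cmr$, the two kernel identities
\[
\ker\psi(\lambda)=\{u\in\cL:\,I_\cL u\in\mul\wt A\},\qquad
\ker\varphi(\lambda)=\{u\in\cL:\,I_\cL u\in\ker\wt A\},
\]
and then to read (i) and (ii) off from orthogonal-complement bookkeeping. For a selfadjoint relation one has $(\dom\wt A)^\perp=\mul\wt A$ and $(\ran\wt A)^\perp=\ker\wt A$ (the general identities $\mul\wt A^*=(\dom\wt A)^\perp$, $\ker\wt A^*=(\ran\wt A)^\perp$ together with $\wt A=\wt A^*$). Since $u\perp P_\cL\dom\wt A$ means $(I_\cL u,f)=0$ for all $f\in\dom\wt A$, i.e. $I_\cL u\in(\dom\wt A)^\perp=\mul\wt A$, the first kernel identity says exactly $\ker\psi(\lambda)=(P_\cL\dom\wt A)^\perp$, and likewise the second gives $\ker\varphi(\lambda)=(P_\cL\ran\wt A)^\perp$; the two equivalences follow at once. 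I note that this reduction uses neither $\dim\cL<\infty$ nor any choice of $\lambda$, so the statement in fact holds for every $\lambda\in\cmr$ and in arbitrary dimension.

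To prove the identities, write $R_\lambda=(\wt A-\lambda)^{-1}$. If $I_\cL u\in\mul\wt A$ then $R_\lambda$ annihilates it, so $\psi(\lambda)u=P_\cL R_\lambda I_\cL u=0$; this is the trivial inclusion. For the reverse, suppose $u\in\ker\psi(\lambda)$ and put $g:=\gamma(\lambda)u=P_\cH R_\lambda I_\cL u$, so that $R_\lambda I_\cL u=(g,0)$ and hence the pair $\{(g,0),(\lambda g,u)\}$ belongs to $\wt A$. Since $\wt A$ is symmetric, $(x',x)$ is real for every $\{x,x'\}\in\wt A$; applied to this pair it gives $(x',x)=\lambda\|g\|_\cH^2$, which is real only if $g=0$ because $\lambda\in\cmr$. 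Then $\{(0,0),(0,u)\}\in\wt A$, i.e. $I_\cL u\in\mul\wt A$. The second identity is handled identically: for $u\in\ker\varphi(\lambda)$ write $R_\lambda I_\cL u=(g,v)$ with $v=\psi(\lambda)u$; the defining relation yields $\{(g,v),(\lambda g,\varphi(\lambda)u)\}=\{(g,v),(\lambda g,0)\}\in\wt A$, the same reality argument forces $g=0$, and then $u+\lambda v=\varphi(\lambda)u=0$ together with $\{(0,v),(0,u+\lambda v)\}\in\wt A$ gives $\{(0,v),0\}\in\wt A$; scaling by $-\lambda$ (legitimate since $\lambda\neq0$) yields $\{(0,u),0\}\in\wt A$, i.e. $I_\cL u\in\ker\wt A$. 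The reverse inclusion for $\varphi$ comes by running the resolvent backwards: $I_\cL u\in\ker\wt A$ forces $R_\lambda I_\cL u=-\lambda^{-1}I_\cL u$, whence $\varphi(\lambda)u=u+\lambda\psi(\lambda)u=0$.

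I expect the only genuine obstacle to be the two forward inclusions $\ker\psi(\lambda)\subseteq\{u:\,I_\cL u\in\mul\wt A\}$ and $\ker\varphi(\lambda)\subseteq\{u:\,I_\cL u\in\ker\wt A\}$: a priori the component $g=\gamma(\lambda)u$ need not vanish, and it is precisely the selfadjointness of $\wt A$ — entering through the reality of $(x',x)$ on its graph — combined with $\lambda\in\cmr$ that collapses $g$ to zero and thereby pushes the relevant pair into $\mul\wt A$ or into $\ker\wt A$. The remaining ingredients, namely the orthogonality dictionary of the first paragraph and the easy inclusions obtained by evaluating the resolvent on $\mul\wt A$ and on $\ker\wt A$, are routine.
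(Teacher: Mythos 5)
Your proof is correct and takes essentially the same route as the paper: both arguments identify $\ker\psi(\lambda)$ with $(P_\cL\,\dom\wt A)^\perp$ by showing it equals $\{u:\,(0,u)\in\mul\wt A\}$ and invoking $\mul\wt A=(\dom\wt A)^\perp$ for the selfadjoint $\wt A$ (and dually $\ker\varphi(\lambda)$ with $(P_\cL\,\ran\wt A)^\perp$ via $\ker\wt A=(\ran\wt A)^\perp$). The only difference is one of detail: the paper compresses the nontrivial inclusion into the phrase ``in view of~\eqref{eq:1.1}'', whereas you make it explicit through the reality of $(x',x)$ on the graph of a symmetric relation, which forces $\gamma(\lambda)u=0$ when $\psi(\lambda)u=0$ (respectively $\varphi(\lambda)u=0$) and $\lambda\in\cmr$; your observation that neither $\dim\cL<\infty$ nor the choice of $\lambda$ is needed is also consistent with the paper's argument.
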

\begin{proof}
Let us prove the first statement. The set $P_\cL\dom\wt A$ consists
of the vectors $u\in \cL$ such that
\[
\left\{\left[%
\begin{array}{c}
  f \\
  u \\
\end{array}%
\right],\left[%
\begin{array}{c}
  f' \\
  u' \\
\end{array}%
\right]\right\}\in \widetilde{A}\quad \mbox{for some}\quad f,f'\in
\cH,u'\in\cL.
\]
If there is a vector $v\in\cL$ such that $v\perp u$ for all $u\in
P_\cL\dom\wt A$ then
\[
\left\{\left[%
\begin{array}{c}
  0 \\
  0 \\
\end{array}%
\right],\left[%
\begin{array}{c}
  0 \\
  v \\
\end{array}%
\right]\right\}\in \widetilde{A},
\]
and then $\psi(\lambda)v=0$, $\varphi(\lambda)v=v$, due to~\eqref{eq:1.1}.

Conversely, if $\psi(\lambda)v=0$ for some $v\ne 0$, then in view of~\eqref{eq:1.1}
\[
\left\{\left[%
\begin{array}{c}
  0 \\
  0 \\
\end{array}%
\right],\left[%
\begin{array}{c}
  0 \\
  v \\
\end{array}%
\right]\right\}\in \wt{A},
\]
and hence $v\perp  P_\cL\dom\wt A$.

The proof of the second statement is similar.
\end{proof}
If the Nevanlinna pair $\{\varphi,\psi\}$ satisfies the first condition (i) in
Lemma~\ref{lm:1.1} then it is equivalent to a Nevanlinna function $m\in N(\cL)$. If,
additionally, $m(\lambda)$ takes values in $[\cL]$ for $\lambda\in\dC\setminus\dR$ then
the reproducing kernel Hilbert space $\cH(\varphi,\psi)$ is unitary equivalent to the
reproducing kernel Hilbert space $\cH(m)$ with the kernel ${\mathsf
N}_{\omega}^{m}(\lambda)$ (see~\eqref{eq:0.1}) via the mapping
\[
\cU:f\in\cH(m)\to(I-\lambda m(\lambda))^{-1}f(\lambda)\in\cH(\varphi,\psi).
\]
These spaces have been introduced by L.~de~Branges~\cite{Br77}. The following statement
can be derived from~\cite{AD84}, however we will give a proof for the convenience of the
reader.
\begin{lemma}\label{lm:2.}
Let $m\in N[\cL]$, and let $\dim\cL<\infty$.
Then:
\begin{enumerate}
\item[(i)]
$f(\lambda)=O(1)$ ($\lambda\widehat{\rightarrow}\infty$) for all
$f\in\cH(m)$;
\item[(ii)]
If, additionally, $m\in N_0[\cL]$ then $f(\lambda)=O(\frac{1}{\lambda})$
($\lambda\widehat{\rightarrow}\infty$) for all $f\in\cH(m)$.
\end{enumerate}
\end{lemma}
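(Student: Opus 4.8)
We must show that every $f\in\cH(m)$, where $m\in N[\cL]$ with $\dim\cL<\infty$, satisfies $f(\lambda)=O(1)$ as $\lambda\widehat\to\infty$ (non-tangentially to $\infty$), and that if moreover $m\in N_0[\cL]$ (the subclass for which, presumably, $\lambda m(\lambda)\to 0$ or $m$ has the integral representation with no linear term and a finite measure), then in fact $f(\lambda)=O(1/\lambda)$. The plan is to use the reproducing kernel structure together with the Cauchy–Schwarz inequality, reducing the growth of $f$ to the growth of the diagonal of the kernel ${\mathsf N}^m_\lambda(\lambda)$.

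**Main approach.** The key tool is the reproducing property \eqref{eq:0.2}: for any $u\in\cL=\dC^d$ and $\lambda\in\cmr$,
\[
|u^*f(\lambda)|=\bigl|\langle f,{\mathsf N}^m_\lambda(\cdot)u\rangle_{\cH(m)}\bigr|
\le \|f\|_{\cH(m)}\,\bigl\|{\mathsf N}^m_\lambda(\cdot)u\bigr\|_{\cH(m)}
=\|f\|_{\cH(m)}\,\bigl(u^*{\mathsf N}^m_\lambda(\lambda)u\bigr)^{1/2}.
\]
Hence, taking the supremum over unit vectors $u$, the pointwise norm $\|f(\lambda)\|$ is controlled by $\|f\|_{\cH(m)}$ times the operator norm $\|{\mathsf N}^m_\lambda(\lambda)\|^{1/2}$. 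So the entire problem reduces to estimating the diagonal kernel
\[
{\mathsf N}^m_\lambda(\lambda)=\frac{m(\lambda)-m(\lambda)^*}{\lambda-\bar\lambda}=\frac{\IM m(\lambda)}{\IM\lambda}
\]
as $\lambda\widehat\to\infty$. For (i) I would invoke the integral (Nevanlinna–Herglotz) representation for $m\in N[\cL]$: since $\dim\cL<\infty$,
\[
m(\lambda)=\alpha+\beta\lambda+\int_\dR\Bigl(\frac{1}{t-\lambda}-\frac{t}{1+t^2}\Bigr)\,d\Sigma(t),
\]
with $\alpha=\alpha^*$, $\beta\ge 0$, and $\int d\Sigma(t)/(1+t^2)<\infty$. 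From this one computes $\IM m(\lambda)/\IM\lambda=\beta+\int_\dR d\Sigma(t)/|t-\lambda|^2$, which tends (along non-tangential approach to $\infty$) to $\beta$ by dominated convergence. Thus ${\mathsf N}^m_\lambda(\lambda)$ stays bounded, giving $\|f(\lambda)\|=O(1)$.

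**The refinement (ii).** For the second statement I expect $N_0[\cL]$ to be exactly the subclass with $\beta=0$ (the linear term absent), so that $\lambda m(\lambda)\to$ a finite limit, equivalently $m(\lambda)\to$ a constant boundedly. Under $\beta=0$ the estimate above becomes ${\mathsf N}^m_\lambda(\lambda)=\int_\dR d\Sigma(t)/|t-\lambda|^2$, and for $\lambda\widehat\to\infty$ non-tangentially one has $|t-\lambda|^2\ge c\,|\lambda|^2$ on the bulk of the mass, so that $\IM m(\lambda)/\IM\lambda=O(1/|\lambda|^2)$; feeding $\|{\mathsf N}^m_\lambda(\lambda)\|^{1/2}=O(1/|\lambda|)$ back into the Cauchy–Schwarz bound yields $\|f(\lambda)\|=O(1/|\lambda|)$, which is exactly assertion (ii). Here I must be a little careful to control the region $|t|\gtrsim|\lambda|$ where $\Sigma$ might carry mass; finiteness of $\int d\Sigma/(1+t^2)$ together with the non-tangentiality (so that $\IM\lambda\ge\epsilon|\lambda|$) makes this a routine dominated-convergence argument.

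**The expected obstacle.** The genuinely delicate point is the uniformity over the full $d$-dimensional vector $u$, i.e. passing from the scalar reproducing identity to a bound on the operator norm $\|{\mathsf N}^m_\lambda(\lambda)\|$, and then controlling this operator-valued diagonal kernel uniformly as $\lambda\widehat\to\infty$. Because $\dim\cL<\infty$, the finitely many matrix entries can each be handled by the scalar Herglotz theory and norm-equivalence finishes the job, so the finite-dimensionality hypothesis is used precisely to convert entrywise $O(1)$ (resp. $O(1/\lambda)$) estimates into a uniform operator-norm estimate. The only other subtlety is keeping track of what $\widehat\to\infty$ means: the bound on $1/|t-\lambda|$ degrades if $\lambda$ approaches the real axis, so the non-tangential restriction, giving $\IM\lambda$ comparable to $|\lambda|$, is essential and should be stated explicitly when applying dominated convergence.
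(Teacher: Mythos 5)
Your part (i) is correct and coincides with the paper's own proof: the reproducing property plus Cauchy--Schwarz gives $|(f(\lambda),u)_\cL|\le\|f\|_{\cH(m)}\bigl(u^*{\mathsf N}^m_\lambda(\lambda)u\bigr)^{1/2}$, and the diagonal kernel ${\mathsf N}^m_\lambda(\lambda)=\IM m(\lambda)/\IM\lambda=\beta+\int_\dR d\Sigma(t)/|t-\lambda|^2$ stays bounded along nontangential approach to $\infty$. Part (ii), however, has a genuine gap, and it sits exactly where you misplace the ``expected obstacle''. You read $N_0[\cL]$ as the class with $\beta=0$ and claim that $\beta=0$ together with $\int_\dR d\Sigma(t)/(1+t^2)<\infty$ forces $\int_\dR d\Sigma(t)/|t-\lambda|^2=O(1/|\lambda|^2)$ nontangentially. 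This is false. The dangerous region is not $|t|\gtrsim|\lambda|$ (that tail is controlled by the convergence of $\int d\Sigma/(1+t^2)$, but dominated convergence yields only $o(1)$, never a rate); it is the bulk $|t|\le|\lambda|/2$, where $|t-\lambda|\ge|\lambda|/2$ gives the bound $4\,\Sigma\bigl([-|\lambda|/2,|\lambda|/2]\bigr)/|\lambda|^2$, which is $O(1/|\lambda|^2)$ only when $\Sigma$ has \emph{finite total mass}. Concretely, $m(\lambda)=\pm i$ on $\dC_\pm$ has $\beta=0$, $\alpha=0$, $d\Sigma=\pi^{-1}dt$, and ${\mathsf N}^m_\lambda(\lambda)=1/\IM\lambda\asymp1/|\lambda|$ nontangentially, so your scheme gives only $f(\lambda)=O(|\lambda|^{-1/2})$; and indeed assertion (ii) itself fails for this $m$, since $\cH(m)$ is then (two copies of) a Hardy space, which contains functions like $(\lambda+i)^{-3/4}$ on $\dC_+$ that are not $O(1/\lambda)$. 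Note also that your parenthetical ``equivalences'' are not equivalences: for this same $m$ one has $\beta=0$ and $m(\lambda)\to i$ boundedly, yet $\lambda m(\lambda)\to\infty$.

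What (ii) actually needs --- and what the paper's proof invokes with the phrase ``for some finite measure $d\sigma$'' --- is that $N_0[\cL]$ is the Kac--Kre\u{\i}n subclass of functions $m(\lambda)=\int_\dR d\sigma(t)/(t-\lambda)$ with a \emph{finite} matrix measure $\sigma$ (equivalently, $\sup_{y>0}y\|m(iy)\|<\infty$). With $\sigma(\dR)<\infty$ the estimate is immediate and needs no splitting of $\dR$: since $t$ is real, nontangentiality gives $|t-\lambda|\ge\IM\lambda\ge\epsilon|\lambda|$ uniformly in $t$, hence ${\mathsf N}^m_\lambda(\lambda)=\int_\dR d\sigma(t)/|t-\lambda|^2\le\sigma(\dR)/(\epsilon|\lambda|)^2$, and your Cauchy--Schwarz reduction then yields $f(\lambda)=O(1/\lambda)$, exactly as in the paper. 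So your overall strategy is the paper's strategy, and your finite-dimensionality bookkeeping for (i) is fine; the missing ingredient in (ii) is the finiteness of the representing measure, which is the whole content of the hypothesis $m\in N_0[\cL]$ and cannot be weakened to $\beta=0$.
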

\begin{proof}
It follows from the reproducing kernel property and Schwartz
inequality that
\[
\begin{split}
|(f(\lambda),u)_\cL|
&=|\langle f(\cdot),{\mathsf N}_{\lambda}(\cdot)u\rangle_{\cH(m)}|\\
&\leq||f(\cdot)||_{\cH(m)}||{\mathsf N}_{\lambda}(\cdot) u||_{\cH(m)}\\
&=||f||_{\cH(m)}\left(\frac{\Im m(\lambda)}{\Im\lambda}u, u\right)^{1/2}=O(1)
\end{split}
\]
for all $f\in\cH(m)$ and $u\in\cL$. If, additionally, $m\in N_0[\cL]$, then
\[
{\displaystyle \left(\frac{\Im
m(\lambda)}{\Im\lambda}u,u\right)=\int\frac{d(\sigma(t)u,u)}{|t-\lambda|^2}=O(\frac{1}{\lambda^2})}
\]
for some finite measure $d\sigma$ and, hence,
\[
(f(\lambda), u)_\cL=O(\frac{1}{\lambda}).
\]
\end{proof}

\subsection{Generalized Fourier transform}
\begin{definition}
A selfadjoint linear relation $\wt{A}$ in $\cH\oplus\cL$ is said to
be $\cL$-minimal if
\begin{equation}\label{eq:2.1}
  \cH_0=\overline{\mbox{span}}\{P_{\cH}(\wt{A}-\lambda)^{-1}\cL:\lambda\in\rho(\wt{A})\}.
\end{equation}
\end{definition}
In this section we show that every $\cL$-minimal selfadjoint linear
relation $A$ is unitarily equivalent to its functional model
$A(\varphi,\psi)$, constructed in Theorem~\ref{thm:1.2}. The
operator $\cF:\cH\to\cH(\varphi,\psi)$ given by
\begin{equation}\label{eq:2.2}
h\mapsto(\cF h)(\lambda)=\gamma(\bar{\lambda})^*
h=P_{\cL}(\wt{A}-\lambda)^{-1}h \quad (h\in\cH)
\end{equation}
is called the {\it
generalized Fourier transform} associated with $\wt{A}$ and the
scale $\cL$.
\begin{theorem}\label{thm:2.1}
let $\wt{A}$ be a selfadjoint linear relation in $\cH\oplus\cL$ and let
$\{\varphi,\psi\}$ be the corresponding Nevanlinna pair given by~\eqref{eq:1.1}. Then:
\begin{enumerate}
\item[1)] The generalized Fourier transform $\cF$ maps isometrically
the subspace $\cH_0$ onto $\cH(\varphi,\psi)$ and $\cF$ is
identically equal to $0$ on $\cH\ominus\cH_0$.

\item[2)] For every $\left\{\left[%
\begin{array}{c}
  f \\
  u \\
\end{array}%
\right],\left[%
\begin{array}{c}
  f' \\
  u' \\
\end{array}%
\right]\right\}\in\wt{A}$ the following identity holds
\begin{equation}\label{eq:2.3}
E(\lambda)\cF(f'-\lambda f)=\begin{bmatrix}\varphi(\lambda)& -\psi(\lambda)\end{bmatrix}\left[%
\begin{array}{c}
  u \\
  u' \\
\end{array}%
\right].
\end{equation}
\end{enumerate}
\end{theorem}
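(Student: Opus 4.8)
The plan is to prove the two assertions separately, using Proposition~\ref{pr:1.3} as the main tool for part 1) and a direct resolvent computation for part 2). Throughout I write $R_\lambda=(\wt A-\lambda)^{-1}$ and recall from~\eqref{eq:2.2} that $(\cF h)(\lambda)=\gamma(\bar\lambda)^*h=P_\cL R_\lambda h$.

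For part 1) I would first dispose of the claim that $\cF$ vanishes on $\cH\ominus\cH_0$. By the very definition~\eqref{eq:2.1}, $\cH_0$ is the closed span of the subspaces $\ran\gamma(\bar\lambda)=P_\cH R_\lambda\cL$ taken over $\lambda\in\rho(\wt A)$, so any $h\perp\cH_0$ satisfies $\gamma(\bar\lambda)^*h=0$ for all such $\lambda$, whence $\cF h=0$. The key observation for the isometry is that $\cF$ sends the generating vectors of $\cH_0$ onto the reproducing kernels: Proposition~\ref{pr:1.3} gives
\[
(\cF\gamma(\bar\omega)u)(\lambda)=\gamma(\bar\lambda)^*\gamma(\bar\omega)u={\mathsf N}_\omega^{\varphi\psi}(\lambda)u,
\]
so that $\cF\gamma(\bar\omega)u={\mathsf N}_\omega^{\varphi\psi}(\cdot)u\in\cH(\varphi,\psi)$.

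Next I would verify that $\cF$ preserves inner products on the dense linear manifold spanned by the vectors $\gamma(\bar\omega)u$. On one side, Proposition~\ref{pr:1.3} yields $\langle\gamma(\bar\omega)u,\gamma(\bar\zeta)v\rangle_\cH=(\gamma(\bar\zeta)^*\gamma(\bar\omega)u,v)_\cL=({\mathsf N}_\omega^{\varphi\psi}(\zeta)u,v)_\cL$; on the other side, the reproducing property~\eqref{eq:1.4} gives $\langle{\mathsf N}_\omega^{\varphi\psi}(\cdot)u,{\mathsf N}_\zeta^{\varphi\psi}(\cdot)v\rangle_{\cH(\varphi,\psi)}=({\mathsf N}_\omega^{\varphi\psi}(\zeta)u,v)_\cL$. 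Since the two expressions coincide, $\cF$ extends to an isometry of $\cH_0$ into $\cH(\varphi,\psi)$; being isometric it has closed range, and that range contains the total set $\{{\mathsf N}_\omega^{\varphi\psi}(\cdot)u\}$, which is dense in $\cH(\varphi,\psi)$, so $\cF$ maps $\cH_0$ \emph{onto} $\cH(\varphi,\psi)$.

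For part 2) I would argue from the linear relation structure. Given $\{\{f,u\},\{f',u'\}\}\in\wt A$, write $F=\col(f,u)$ and $F'=\col(f',u')$; then $\{F,F'-\lambda F\}\in\wt A-\lambda$, hence $\{F'-\lambda F,F\}\in R_\lambda$, i.e. $R_\lambda(F'-\lambda F)=F$ for $\lambda\in\rho(\wt A)$. Splitting off the $\cL$-component $u'-\lambda u$ and using $P_\cL R_\lambda|_\cL=\psi(\lambda)$ together with $(\cF g)(\lambda)=P_\cL R_\lambda g$, one computes
\[
E(\lambda)\cF(f'-\lambda f)=P_\cL R_\lambda\col(f'-\lambda f,0)=u+\lambda\psi(\lambda)u-\psi(\lambda)u',
\]
and the identity $\varphi(\lambda)=I_\cL+\lambda\psi(\lambda)$ from~\eqref{eq:1.1} rewrites the right-hand side as $\varphi(\lambda)u-\psi(\lambda)u'$, which is the asserted formula. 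The main point requiring care is precisely this relation bookkeeping: one must check that $R_\lambda(F'-\lambda F)=F$ holds as a genuine operator identity for $\lambda\in\rho(\wt A)$, so that the multivalued part of $\wt A$ does not interfere and the component splitting is legitimate. The remaining reproducing-kernel argument is comparatively routine once Proposition~\ref{pr:1.3} identifies $\cF\gamma(\bar\omega)u$ with the kernel ${\mathsf N}_\omega^{\varphi\psi}(\cdot)u$, the only subtlety there being the passage from isometry on the dense span to surjectivity onto the whole space $\cH(\varphi,\psi)$.
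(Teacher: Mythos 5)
Your proof is correct, and it splits cleanly into a part that matches the paper and a part that does not. On part 1) you follow essentially the paper's own route: Proposition~\ref{pr:1.3} identifies $\cF\gamma(\bar\omega)u$ with the reproducing kernel ${\mathsf N}^{\varphi\psi}_\omega(\cdot)u$, the reproducing property~\eqref{eq:1.4} gives preservation of inner products on the span of these vectors, and density on both sides yields the isometry of $\cH_0$ onto $\cH(\varphi,\psi)$; the vanishing of $\cF$ on $\cH\ominus\cH_0$ is read off from~\eqref{eq:2.2} in both treatments. If anything you are slightly more careful than the paper, whose display~\eqref{eq:2.31} checks only norms of single kernel vectors $h=\gamma(\bar\omega)u$, whereas your two-vector computation with $\gamma(\bar\omega)u$ and $\gamma(\bar\zeta)v$ gives the polarized identity that the extension-by-continuity argument actually needs. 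On part 2) you genuinely diverge. The paper argues by duality: for $g=\gamma(\bar\omega)v$ it exhibits the concrete elements $\left\{\col(g,\psi(\bar\omega)v),\,\col(\bar\omega g,\varphi(\bar\omega)v)\right\}\in\wt A$, pairs an arbitrary element of $\wt A$ against them, and uses the symmetry $\wt A=\wt A^*$ (Green's identity) to arrive at the adjoint form~\eqref{eq:2.5} of the claimed identity. You instead observe that $\{F'-\lambda F,F\}\in(\wt A-\lambda)^{-1}$ and that for $\lambda\in\rho(\wt A)$ this inverse is a single-valued, everywhere defined operator, so $R_\lambda(F'-\lambda F)=F$ holds literally; projecting to $\cL$ and using $P_\cL R_\lambda|_\cL=\psi(\lambda)$ together with $\varphi(\lambda)=I_\cL+\lambda\psi(\lambda)$ from~\eqref{eq:1.1} finishes the computation. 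Both arguments are sound. Yours is shorter, uses selfadjointness only through $\rho(\wt A)\supset\dC\setminus\dR$, and you correctly isolate the one point that needs care, namely that single-valuedness of the resolvent prevents $\mul\wt A$ from interfering with the component splitting. The paper's pairing argument is longer but stays inside the Green's-identity formalism (adjoint relations, boundary triplets) used throughout the rest of the paper, and it produces directly the adjoint formulation~\eqref{eq:2.5}, i.e.\ the formula for $\gamma(\bar\omega)^*$ on elements of $\wt A$, which is the form quoted later in the extension-theoretic arguments.
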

\begin{proof}
1) For every vector $h=\gamma(\bar{\omega})u$
($\omega\in\rho(\wt{A})$, $u\in\cL$) it follows from
Proposition~\ref{pr:1.3} that
\[
(\cF h)(\lambda)= \gamma(\bar{\lambda})^*\gamma(\bar{\omega})u
={\mathsf N}^{\varphi\psi}_{\omega}(\lambda)u.
\]
Therefore, $\cF$ maps the linear space
$\mbox{span}\{\gamma(\omega)\cL:\omega\in\rho(\wt{A})\}$ dense in
$\cH_0$ onto the linear space $\mbox{span}\{{\mathsf
N}_{\omega}^{\varphi\psi}(\cdot)\cL:\omega\in\rho(\wt{A})\}$ which
is dense in $\cH(\varphi,\psi)$. Moreover, this mapping is
isometric, since
\begin{equation}\label{eq:2.31}
\begin{split}
 (\cF h,\cF h)_{\cH(\varphi,\psi)}
&=({\mathsf N}^{\varphi\psi}_{\omega}(\cdot)u,{\mathsf N}^{\varphi\psi}_{\omega}(\cdot)u)_{\cH(\varphi,\psi)}\\
&=({\mathsf N}^{\varphi\psi}_{\omega}(\omega)u,u)_{\cL}=(h,h)_{\cH}.
\end{split}
\end{equation}
This proves the first statement. It is clear from~\eqref{eq:2.2}
that $\cF h\equiv 0$ for $h\in\cH\ominus\cH_0$.

2) Let
\[
g=\gamma(\bar{\omega})v=P_{\cH}(\wt{A}-\bar{\omega})^{-1}v, \quad
v\in\cL.
\]
Then it follows from~\eqref{eq:1.1},~\eqref{eq:1.7} that
\[
\left[%
\begin{array}{c}
  g \\
  \psi(\bar{\omega})v \\
\end{array}%
\right]=(\wt{A}-\bar{\omega})^{-1}\left[%
\begin{array}{c}
  0 \\
  v \\
\end{array}%
\right],\quad\left[%
\begin{array}{c}
  \bar{\omega}g \\
  \varphi(\bar{\omega})v \\
\end{array}%
\right]=[I+\bar{\omega}(\wt{A}-\bar{\omega})^{-1}]\left[%
\begin{array}{c}
  0 \\
  v \\
\end{array}%
\right]
\]
and hence
\[
\left\{\left[%
\begin{array}{c}
  g \\
  \psi(\bar{\omega})v \\
\end{array}%
\right],\left[%
\begin{array}{c}
  \bar{\omega}g \\
  \varphi(\bar{\omega})v \\
\end{array}%
\right]\right\}\in\wt{A}.
\]
Since $\wt{A}=\wt{A}^*$ one obtains for all $\left\{\left[%
\begin{array}{c}
  f \\
  u \\
\end{array}%
\right],\left[%
\begin{array}{c}
  f' \\
  u' \\
\end{array}%
\right]\right\}\in\wt{A}$
\[
(f',g)_{\cH}-(f,\bar{\omega}g)_{\cH} +(u',{\psi}(\bar{\omega}))_{\cL}
-(u,{\varphi}(\bar\omega)v)_{\cL}=0,\quad v\in\cL.
\]
This implies
\begin{equation}\label{eq:2.5}
\gamma(\bar{\omega})^*(f'-\bar{\omega}f)=\varphi(\omega)u-\psi(\omega)u', \quad
\omega\in\dC \setminus\dR.
\end{equation}
This proves the identity~\eqref{eq:2.3}.
\end{proof}
\begin{remark}
In the case, when the linear relation $\wt{A}$ is $\cL$-minimal it
is unitary equivalent to the linear relation $A(\varphi,\psi)$ via
the formula
\begin{equation}\label{eq:2.6}
 \wt{A}(\varphi,\psi)=\left\{\left\{\left[%
\begin{array}{c}
  \cF f \\
  u \\
\end{array}%
\right],\left[%
\begin{array}{c}
  \cF f' \\
  u' \\
\end{array}%
\right]\right\}:\left\{\left[%
\begin{array}{c}
  f \\
  u \\
\end{array}%
\right],\left[%
\begin{array}{c}
  f' \\
  u' \\
\end{array}%
\right]\right\}\in\wt{A}\right\}.
\end{equation}
The operator $\cF\oplus I_{\cL}$ establishes this unitary
equivalence.
\end{remark}

\section{Abstract interpolation problem}

Let $\cX$ be a complex linear space, let $\cL$ be a Hilbert space, let $B_1$, $B_2$ be
linear operators in $\cX$, let $C_1$, $C_2$ be linear operators from $\cX$ to $\cH$, and
let $K$ be a nonegative sesquilinear form on $\cX$ which satisfies the following
identity
\begin{enumerate}
\item[(A1)]
$K(B_2h,B_1g)-K(B_1h,B_2g)=(C_1h,C_2g)_\cL-(C_2h,C_1g)_\cL$
\end{enumerate}
for every $h,g\in\cX$. Consider the following

\noindent {\bf Problem $AIP(B_1, B_2, C_1, C_2, K)$}. Let the data set $( B_1, B_2, C_1,
C_2, K)$ satisfies the assumption $(A1)$. Find a normalized Nevanlinna pair $
\{\varphi,\psi\} \in \wt{N}(\cL)$ such that for some linear mapping
$F:\cX\to\cH(\varphi,\psi)$ the following conditions hold:
\begin{enumerate}
\item[(C1)]
$(F{B_2h})(\lambda)-\lambda(F {B_1h})(\lambda)
=\left[\begin{array}{cc}\varphi(\lambda)&-\psi(\lambda)\end{array}\right]\left[%
\begin{array}{c}
  C_1 h \\
C_2 h \\
\end{array}%
\right]$;
\item[(C2)] $\|F h\|_{\cH(\varphi, \psi)}^2\leq K(h,h)$
\end{enumerate}
 for all $h\in \cX$.

Clearly $\ker K=\{h\in \cX:\,K(h,h)=0\}$ is a linear subspace of $\cX$.
Consider the factor space $\wh{\cX}=\cX/\ker K$ and denote by $\hat{h}$ the equivalence
class $h+\ker K$ in $\wh{\cX}$, $h\in \cX$. Let $\wh \cX$ be endowed with the scalar
product
\begin{equation}\label{scalarP}
(\wh h,\wh g)_{\cH}=K(h,g), \quad h, g\in  \cX
\end{equation}
and let $\cH$ be the completion of $\wh{\cX}$ with respect to the norm $\|h\|_{\cH}$.

In examples (see Section~\ref{Examples}) the linear space $\cX$ has an original inner
product. In order to avoid an ambiguity we denote by $B^+$ the adjoint to the operator
$B:\cH\to\cH$ and by $B^*$ the adjoint to  $B:\cX\to\cX$.
\begin{proposition}\label{pr:3.1}
Let the data set $(B_1,B_2,C_1,C_2,K)$ satisfies the assumption $(A1)$. Then the linear
relation
\begin{equation}\label{eq:3.5}
    \wh{A}=\left\{\left\{%
\left[\begin{array}{c}
  \widehat{B_1h} \\
  C_1 h \\
\end{array}%
\right],\left[%
\begin{array}{c}
  \widehat{B_2h} \\
  C_2 h \\
\end{array}%
\right]\right\}: h\in\cX\right\}
\end{equation}
is symmetric in ${\cH}\oplus\cL$.
\end{proposition}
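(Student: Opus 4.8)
The plan is to verify directly that the linear relation $\wh A$ in~\eqref{eq:3.5} satisfies the symmetry condition $(F',G)_{\cH\oplus\cL}=(F,G')_{\cH\oplus\cL}$ for every pair of elements $\{F,F'\}$, $\{G,G'\}\in\wh A$. The point is that symmetry of a linear relation means precisely that $\wh A\subseteq\wh A^*$, which unwinds to the requirement that $(F',G)-(F,G')=0$ whenever both $\{F,F'\}$ and $\{G,G'\}$ lie in $\wh A$. So I would take two generic generators of $\wh A$, indexed by $h,g\in\cX$, namely $F=\bigl[\,\widehat{B_1h},\,C_1h\,\bigr]^\top$, $F'=\bigl[\,\widehat{B_2h},\,C_2h\,\bigr]^\top$ and similarly $G=\bigl[\,\widehat{B_1g},\,C_1g\,\bigr]^\top$, $G'=\bigl[\,\widehat{B_2g},\,C_2g\,\bigr]^\top$, and compute the inner products in the orthogonal-sum space $\cH\oplus\cL$.

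The key computation is then entirely formal. Using the definition~\eqref{scalarP} of the inner product on $\cH$, which identifies $(\wh h,\wh g)_\cH$ with $K(h,g)$, the $\cH$-component of $(F',G)-(F,G')$ becomes
\[
(\widehat{B_2h},\widehat{B_1g})_\cH-(\widehat{B_1h},\widehat{B_2g})_\cH
=K(B_2h,B_1g)-K(B_1h,B_2g),
\]
while the $\cL$-component is
\[
(C_2h,C_1g)_\cL-(C_1h,C_2g)_\cL.
\]
Adding these two and invoking assumption $(A1)$ shows that the total is
\[
\bigl[K(B_2h,B_1g)-K(B_1h,B_2g)\bigr]-\bigl[(C_1h,C_2g)_\cL-(C_2h,C_1g)_\cL\bigr]=0,
\]
which is exactly the identity $(A1)$ rearranged. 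Hence $(F',G)_{\cH\oplus\cL}=(F,G')_{\cH\oplus\cL}$, establishing $\wh A\subseteq\wh A^*$, i.e.\ $\wh A$ is symmetric.

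The only genuine subtlety, and the step I would be most careful about, is the well-definedness of $\wh A$ as a relation on the \emph{factor} space $\wh\cX=\cX/\ker K$: since the first components are the cosets $\widehat{B_1h}$, $\widehat{B_2h}$, one should check that the generators depend only on the relevant data and that the map $h\mapsto\{F,F'\}$ is consistent modulo $\ker K$ in the $\cH$-component, while the $\cL$-components $C_1h$, $C_2h$ are taken in $\cL$ directly. This is harmless here because the inner product~\eqref{scalarP} is defined through $K$, so any ambiguity in choosing a representative of a coset lives in $\ker K$ and contributes zero to every inner product; the symmetry computation above never needs a particular representative. I would remark that the same $(A1)$ identity that guarantees symmetry also guarantees this consistency, so no separate argument is required, and the proof reduces to the one-line application of $(A1)$ after expanding the inner products.
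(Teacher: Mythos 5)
Your proof is correct and is essentially the paper's own argument: both expand the $\cH\oplus\cL$ inner products via~\eqref{scalarP} and invoke (A1), the only immaterial difference being that the paper checks just the diagonal case $g=h$ (which suffices, since symmetry of a relation in a complex space is equivalent to $(F',F)$ being real for all $\{F,F'\}$ in it), while you verify the identity for two arbitrary generators. Your closing worry about well-definedness is moot: $\wh{A}$ is defined as the \emph{set} of pairs indexed by $h\in\cX$, i.e.\ the image of a linear map from $\cX$ into $(\cH\oplus\cL)^2$, so no consistency of the assignment $h\mapsto\{F,F'\}$ modulo $\ker K$ is required at all.
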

\begin{proof}
The statement is implied by~\eqref{eq:3.5} since
\[
\begin{split}
&\left\langle\left[%
\begin{array}{c}
  \widehat{B_2h} \\
  C_2 h \\
\end{array}%
\right],\left[%
\begin{array}{c}
  \widehat{B_1h} \\
  C_1 h \\
\end{array}%
\right]\right\rangle_{\cH\oplus\cL}-\left\langle\left[%
\begin{array}{c}
  \widehat{B_1h} \\
  C_1 h \\
\end{array}%
\right],\left[%
\begin{array}{c}
  \widehat{B_2h} \\
  C_2 h \\
\end{array}%
\right]\right\rangle_{\cH\oplus\cL}\\
&= K(B_2h,B_1h)-K(B_1h,B_2h)-(C_1h,C_2h)_\cL+(C_2h,C_1h)_\cL=0.
\end{split}\]
\end{proof}
\begin{remark}
In general, the linear relation $\wh{A}$ need not be simple and its
deficiency indices not necessarily coincide.
\end{remark}
\begin{theorem}\label{thm:3.2}
Let the data set $(B_1,B_2,C_1,C_2,K)$ satisfies the assumption
$(A1)$. Then the Problem $AIP(B_1,B_2,C_1,C_2,K)$ is solvable and
the set of its normalized solutions is parametrized by the formula
\begin{equation}\label{eq:3.6}
\left[%
\begin{array}{c}
  \psi (\lambda) \\
  \varphi(\lambda) \\
\end{array}%
\right]=\left[%
\begin{array}{cc}
  I_\cL & 0 \\
  \lambda & I_\cL \\
\end{array}%
\right]\left[%
\begin{array}{c}
  P_{\cL}(\wt{A}-\lambda)^{-1}|_{\cL} \\
      I_{\cL} \\
\end{array}%
\right],
\end{equation}
where $\wt{A}$ ranges over the set of all selfadjoint extensions of $\wh{A}$ with the
exit in a Hilbert space $\wt{\cH}\oplus\cL\supset\cH\oplus\cL$. The corresponding linear
mapping $F:\cX\to\cH(\varphi,\psi)$ is given by
\begin{equation}\label{eq:3.61}
    (Fh)(\lambda)=P_\cL(\wt A-\lambda)^{-1}\wh h,\quad h\in \cX.
\end{equation}
\end{theorem}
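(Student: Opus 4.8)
The plan is to read the displayed formula \eqref{eq:3.6} simply as the compact matrix form of the correspondence \eqref{eq:1.1}, namely $\psi(\lambda)=P_\cL(\wt A-\lambda)^{-1}|_\cL$ and $\varphi(\lambda)=I_\cL+\lambda P_\cL(\wt A-\lambda)^{-1}|_\cL$, so that the theorem asserts a correspondence between selfadjoint exit extensions $\wt A\supset\wh A$ (the scale $\cL$ being kept fixed) and normalized solutions $\{\varphi,\psi\}$ of the AIP. I would prove the two inclusions separately and deduce solvability from the fact that the symmetric relation $\wh A$ of Proposition~\ref{pr:3.1} always admits a selfadjoint extension with exit, so the parametrizing set is nonempty.

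First I would treat the forward direction: every $\wt A$ produces a solution. Given a selfadjoint $\wt A$ in $\wt\cH\oplus\cL$ with $\wt A\supset\wh A$, Proposition~\ref{pr:1.1} shows that the pair $\{\varphi,\psi\}$ defined by \eqref{eq:1.1} is a normalized Nevanlinna pair, and I set $(Fh)(\lambda)=P_\cL(\wt A-\lambda)^{-1}\wh h=(\cF\wh h)(\lambda)$, where $\cF$ is the generalized Fourier transform of $\wt A$ and $\wh h\in\cH\subset\wt\cH$ is the class of $h$. Condition $(C2)$ then follows at once from Theorem~\ref{thm:2.1}(1): since $\cF$ is a partial isometry, $\|Fh\|_{\cH(\varphi,\psi)}^2=\|\cF\wh h\|^2\le\|\wh h\|_\cH^2=K(h,h)$. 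For $(C1)$ I would apply Theorem~\ref{thm:2.1}(2) to the element $\{[\widehat{B_1h};C_1h],[\widehat{B_2h};C_2h]\}\in\wh A\subset\wt A$; with $f=\widehat{B_1h}$, $f'=\widehat{B_2h}$, $u=C_1h$, $u'=C_2h$, identity \eqref{eq:2.3} reads $(FB_2h)(\lambda)-\lambda(FB_1h)(\lambda)=\varphi(\lambda)C_1h-\psi(\lambda)C_2h$, which is exactly $(C1)$.

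For the converse I would start from an arbitrary normalized solution $\{\varphi,\psi\}$ together with its map $F$. By $(C2)$ the map $F$ annihilates $\ker K$ and is contractive for the form $K$, hence descends to a contraction $\wh F:\cH\to\cH(\varphi,\psi)$ with $\wh F\wh h=Fh$. Because $\{\varphi,\psi\}$ is normalized it is, by Proposition~\ref{pr:1.1} and Theorem~\ref{thm:1.2}, the pair of the selfadjoint model $A(\varphi,\psi)$, so that $\psi(\bar\lambda)^*=\psi(\lambda)$ and $\varphi(\bar\lambda)^*=\varphi(\lambda)$; rewriting $(C1)$ with these symmetries shows that $\{[FB_1h;C_1h],[FB_2h;C_2h]\}$ satisfies the defining relation \eqref{eq:1.5} of $A(\varphi,\psi)$. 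Consequently $\wh F\oplus I_\cL$ maps the symmetric relation $\wh A$ into the selfadjoint relation $A(\varphi,\psi)$. It then remains to dilate this intertwining contraction to a selfadjoint exit extension: using the defect operator $D=(I_\cH-\wh F^*\wh F)^{1/2}$ and the isometry $V=\bigl[\begin{smallmatrix}\wh F\\ D\end{smallmatrix}\bigr]$ of $\cH$ into $\wt\cH=\cH(\varphi,\psi)\oplus\cD$, $\cD=\cran D$, I would identify $\cH\oplus\cL$ with $V\cH\oplus\cL$ and build a selfadjoint $\wt A$ on $\wt\cH\oplus\cL$ that contains $V\wh A$ and whose compressed resolvent $P_\cL(\wt A-\lambda)^{-1}|_\cL$ agrees with that of $A(\varphi,\psi)$, i.e. equals $\psi(\lambda)$. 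Once such a $\wt A$ is produced, its Nevanlinna pair is $\{\varphi,\psi\}$ and the forward computation recovers $F$ through \eqref{eq:3.61}, closing the correspondence.

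The main obstacle is precisely this last coupling step in the converse: the image $V\wh A$ has nontrivial components in the exit space $\cD$, so $\wt A$ cannot be taken as an orthogonal direct sum of $A(\varphi,\psi)$ with a relation on $\cD$, and the $\cD$-dynamics must be intertwined with $A(\varphi,\psi)$ while keeping the compression to $\cL$ equal to $\psi(\lambda)$ and preserving selfadjointness. Existence of such an extension is exactly M.G.~Kre\u{\i}n's theory of generalized resolvents (Naimark dilation) of the symmetric relation $\wh A$, and the positivity needed to apply it is furnished by the contractivity of $\wh F$ coming from $(C2)$; I expect this to be the only nonroutine part, the verification of $(C1)$, $(C2)$ and of \eqref{eq:3.6} being direct consequences of Theorems~\ref{thm:1.2} and~\ref{thm:2.1}.
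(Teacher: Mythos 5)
Your forward direction coincides with the paper's sufficiency proof and is fine. The genuine gap is in the converse, and it sits exactly where you declare the ``main obstacle'': your claim that $\wt A$ \emph{cannot} be taken as an orthogonal direct sum of $A(\varphi,\psi)$ with a relation acting in the defect space $\cD$ is false, and the construction you substitute for it is not carried out. Membership in a direct sum of linear relations is checked componentwise: the relation $\wt A_\cD\oplus A(\varphi,\psi)$, acting in $(\wt\cD\oplus\cH(\varphi,\psi))\oplus\cL$, contains every pair whose $\cD$-components form an element of $\wt A_\cD$ and whose $(\cH(\varphi,\psi)\oplus\cL)$-components form an element of $A(\varphi,\psi)$ --- it is irrelevant that in the image of $\wh A$ under $\wt F\oplus I_\cL$ these two components are correlated (both produced by the same $h\in\cX$). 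This is precisely how the paper closes the argument: one defines
$A_\cD=\left\{\left\{D\wh{B_1h},\,D\wh{B_2h}\right\}:h\in\cX\right\}$
in $\cD$, proves that $A_\cD$ is \emph{symmetric} --- the one computation requiring work, done by combining (A1) with the observation (which you did make) that (C1) forces
$\left\{\begin{bmatrix}\wh F\wh{B_1h}\\ C_1h\end{bmatrix},\begin{bmatrix}\wh F\wh{B_2h}\\ C_2h\end{bmatrix}\right\}\in A(\varphi,\psi)$,
so that the cross terms $(\wh F\wh{B_2h},\wh F\wh{B_1h})-(\wh F\wh{B_1h},\wh F\wh{B_2h})$ appearing in $(D\wh{B_2h},D\wh{B_1h})-(D\wh{B_1h},D\wh{B_2h})$ may be replaced by $(C_1h,C_2h)_\cL-(C_2h,C_1h)_\cL$, after which everything cancels by (A1) --- then takes \emph{any} selfadjoint extension $\wt A_\cD$ of $A_\cD$ (with exit if needed) and sets $\wt A=\wt A_\cD\oplus A(\varphi,\psi)$. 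Since the resolvent of a direct sum splits, $P_\cL(\wt A-\lambda)^{-1}|_\cL=P_\cL(A(\varphi,\psi)-\lambda)^{-1}|_\cL=\psi(\lambda)$, which yields \eqref{eq:3.6} and \eqref{eq:3.61}.

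The appeal to Kre\u{\i}n's generalized-resolvent (Naimark dilation) theory cannot repair this as stated, because it is circular: that theory \emph{describes} the set of $\cL$-resolvents of selfadjoint exit-space extensions of a given symmetric relation; to invoke it here you would first have to know that the prescribed function $\psi(\lambda)$ belongs to that set for $\wh A$, which is essentially the assertion of the theorem being proved. The ``positivity furnished by contractivity of $\wh F$'' does not by itself supply this; what it supplies, once organized correctly, is exactly the symmetry of $A_\cD$ in the defect space --- the elementary step your proposal skips.
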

\begin{proof}{\it Sufficiency.}
Let $\wt{A}$ be a selfadjoint extension of $\wh{A}$ and let $\{\varphi, \psi\}$ be the
normalized Nevanlinna pair corresponding to $\wt{A}$ and the scale $\cL$, and let
 $\cF:\wt{\cH}\rightarrow\cH(\varphi,\psi)$ be the corresponding
 generalized Fourier transform given by~\eqref{eq:2.2}. Then
the formula~\eqref{eq:3.6} is implied by~\eqref{eq:1.1} and in view of~\eqref{eq:2.2}
the linear mapping $F:\cX\to\cH(\varphi,\psi)$ given by~\eqref{eq:3.61} is connected to
$\cF$ via the formula
\begin{equation}\label{cFF}
    Fh=\cF\wh h,\quad (h\in \cX).
\end{equation}
Since $\cF$ satisfies the identity~\eqref{eq:2.3} and
 \[
\left\{%
\left[\begin{array}{c}
  \widehat{B_1h} \\
  C_1 h \\
\end{array}%
\right],\left[%
\begin{array}{c}
  \widehat{B_2h} \\
  C_2 h \\
\end{array}%
\right]\right\}\in {A}\subset\wt{A}
\]
one obtains from~\eqref{eq:2.3}
\begin{equation}\label{eq:3.7}
\begin{split}
(F {B_2h})(\lambda)-\lambda(F {B_1h})(\lambda)&= (\cF
\widehat{B_2h})(\lambda)-\lambda(\cF \widehat{B_1h})(\lambda)\\
&=\left[\begin{array}{cc}\varphi(\lambda)&-\psi(\lambda)\end{array}\right]\left[%
\begin{array}{c}
  C_1 h \\
C_2 h \\
\end{array}%
\right]
 \quad \forall h\in\cH.
\end{split}
\end{equation}
Next, it follows from~\eqref{scalarP} and Theorem~\ref{thm:2.1}
that
\[
\|Fh\|_{\cH(\varphi,\psi)}^2=\|\cF\widehat{h}\|_{\cH(\varphi,\psi)}^2\leq\|\wh
h\|_\cH^2=K(h,h).
\]
This proves $(C1)$, $ (C2)$ and, hence, $\{\varphi,\psi\}$ is a solution of the $AIP$.

{\it Necessity.}  Let a normalized Nevanlinna pair $\{\varphi,\psi\}$ be a solution of
the $AIP$ and let the mapping $F:\cX\rightarrow\cH(\varphi,\psi)$ satisfies (C1), (C2).
We will construct a selfadjoint exit space extension $\wt{A}$ of $\wh A$ such
that~\eqref{eq:3.6} and~\eqref{eq:3.61} hold.

 {\it Step 1.
Isometric embedding of $\cH$ into a Hilbert space.} It follows
from~$(C2)$ that
\begin{equation}\label{eq:3.15}
(Fh)(\lambda)\equiv 0 \mbox{ for all } h\in\ker K.
\end{equation}
Thus $F$ induces the mapping $\wh{F}:\wh \cX\rightarrow\cH(\varphi,\psi)$, which is well
defined by the equality
\begin{equation}\label{eq:3.16}
\hat{h}\rightarrow(\wh{F}\wh h)(\lambda)=(Fh)(\lambda),\quad h\in\cH
\end{equation}
and is contractive due to $(C2)$
\[
\|(\wh{F}\wh
h)(\lambda)\|_{\cH(\varphi,\psi)}^2=\|(Fh)(\lambda)\|_{\cH(\varphi,\psi)}^2\leq
K(h,h)=\|\wh h\|^2_\cH.
\]
We will keep the same notation for the continuous extension of $\wh F$ to $\cH$.

Let $ D= D^*(\ge 0)$ be the defect operator of the contraction
$\wh{F}$ defined by
\begin{equation}\label{eq:3.17}
 D^2=I-\wh{F^+}\wh{F}:\cH\rightarrow\cH
\end{equation}
and let $ \cD=\overline{\mbox{ran}}\, D$ be the defect subspace of
$\wh F$ in $\cH$. Consider the column extension $\wt{F}$ of the
operator $\wh{F}$ to the isometric mapping from $\cH$ to
$\cD\oplus\cH(\varphi,\psi)$ by the formula
\begin{equation}\label{eq:3.18}
\wt{F}{h}=\left[%
\begin{array}{c}
  D{h} \\
  \wh{F}{h} \\
\end{array}%
\right], \quad {h}\in{\cH}.
\end{equation}

{\it Step 2.} {\it Construction of a selfadjoint linear relation
$\wt{A}$.} Let $A_{ \cD}$ be a linear relation in $ \cD$ defined by
\[
A_{ \cD}=\left\{\left\{ D\wh{B_1 h}, D\wh{B_2h}\right\}:h\in \cX\right\}
\]
and let us show that $A_{ \cD}$ is symmetric in $ \cD$. Indeed, it
follows from~\eqref{eq:3.16},~\eqref{eq:3.17} that
\begin{equation}\label{eq:3.20}
\begin{split}
( D\widehat{B_2h}, D\widehat{B_1h})_\cH&-( D\widehat{B_1h}, D\widehat{B_2h})_\cH\\
&=((I-\wh{F}^+\wh{F})\widehat{B_2h},\widehat{B_1h})_\cH-((I-\wh{F}^+\wh{F})\widehat{B_1h},\widehat{B_2h})_\cH\\
&=K(B_2 h,B_1 h)-K(B_1 h,B_2h)\\
&-{(\wh{F}\widehat{B_2h},\wh{F}\widehat{B_1h})}_{\cH(\varphi,\psi)}
+(\wh{F}\widehat{B_1h},\wh{F}\widehat{B_2h})_{\cH(\varphi,\psi)}.
\\
\end{split}
\end{equation}
As follows from (C1)
\begin{equation}\label{eq:3.71}
\begin{split}
(\wh F\widehat{B_2h})(\lambda)-\lambda(\wh F \widehat{B_1h})(\lambda)
&= (F {B_2h})(\lambda)-\lambda(F {B_1h})(\lambda)\\
&= \left[\begin{array}{cc}\varphi(\lambda) & \psi(\lambda)\end{array}\right]\left[%
\begin{array}{c}
  C_1 h \\
C_2 h \\
\end{array}%
\right]
 \quad \forall h\in\cX.
\end{split}
\end{equation}
In view of~\eqref{eq:1.5} this implies
\[
\left\{\left[%
\begin{array}{c}
  \wh{F}\widehat{B_1h} \\
  C_1 h \\
\end{array}%
\right],\left[%
\begin{array}{c}
  \wh{F}\widehat{B_2h} \\
  C_2 h \\
\end{array}%
\right]\right\}\in A(\varphi,\psi).
\]
Therefore, the right hand part of~\eqref{eq:3.20} can be rewritten as
\[
K(B_2 h,B_1 h)-K(B_1 h,B_2 h)-{(C_1 h,C_2 h)_{\cL}+(C_2 h,C_1
h)_{\cL}}
\]
and hence it is vanishing due to~(A1).

Let $\wt A_\cD$ be a selfadjoint extension of $A_\cD$ in a Hilbert
space $\wt\cD\supset \cD$ and let $\wt A=\wt A_\cD\oplus
A(\varphi,\psi)$.

{\it Step 3. Linear relation $\wt{A}$ satisfies~\eqref{eq:3.6} and~\eqref{eq:3.61}}.
Under the identification of the vector $h\in\cH$ with $\wt Fh$ the symmetric linear
relation $\wh{A}$ in $\cH\oplus\cL$ can be identified with the symmetric linear relation
\[
\begin{split}
&A_1=(\wt{F}\oplus I_{\cL})\wh{A}(\wt{F}\oplus
I_{\cL})^{-1}\\
&=\left\{\left\{\left[%
\begin{array}{c}
  D\widehat{B_1 h} \\
  \widehat{F}\widehat{B_1 h} \\
  C_1 h \\
\end{array}%
\right],\left[%
\begin{array}{c}
  D\widehat{B_2 h} \\
  \widehat{F}\widehat{B_2 h} \\
  C_2 h \\
\end{array}%
\right]\right\}:h\in \cX\right\}\\
\end{split}
\]
in $\wt \cH:=\wt\cD\oplus\cH(\varphi,\psi)\oplus\cL$. Moreover $A_1$
is contained in the selfadjoint linear relation
\[
\wt{A} ={\wt A_{\cD}}\oplus A(\varphi,\psi),
\]
since $\{ D\widehat{B_1 h} ,D\widehat{B_2 h}  \}\in A_\cD\subset\wt
A_\cD$ and
\[
\left\{\left[%
\begin{array}{c}
  \wh{F}\widehat{B_1h} \\
  C_1 h \\
\end{array}%
\right],\left[%
\begin{array}{c}
  \wh{F}\widehat{B_2h} \\
  C_2 h \\
\end{array}%
\right]\right\}\in A(\varphi,\psi).
\]
 The
formula~\eqref{eq:3.6} is implied by the analogous formula for
$A(\varphi,\psi)$
\[
\left[%
\begin{array}{c}
  \psi(\lambda) \\
  \varphi(\lambda) \\
\end{array}%
\right]=\left[%
\begin{array}{cc}
  I_\cL & 0 \\
  \lambda I_\cL & I_{\cL} \\
\end{array}%
\right]\left[%
\begin{array}{c}
  P_{\cL}(A(\varphi,\psi)-\lambda)^{-1}|_{\cL} \\
      I_{\cL} \\
\end{array}%
\right]
\]
since
\[
P_{\cL}(A(\varphi,\psi)-\lambda)^{-1}|_{\cL}=P_{\cL}(\wt
A-\lambda)^{-1}|_{\cL}.
\]
Moreover, for every $h\in \cX$ one obtains
\[
\begin{split}
P_{\cL}(\wt A-\lambda)^{-1}\left[%
\begin{array}{c}
  \wt F\wh h \\
  0 \\
\end{array}%
\right]&=
P_{\cL}(\wt A-\lambda)^{-1}\left[%
\begin{array}{c}
    \wh Fh \\
  0 \\
\end{array}%
\right]\\
&=P_{\cL}( A(\varphi,\psi)-\lambda)^{-1}\left[%
\begin{array}{c}
 Fh \\
  0 \\
\end{array}%
\right]\\
&=\cF(\varphi,\psi)Fh=Fh.
 \end{split}
\]
This proves the formula~\eqref{eq:3.61}, since $ \wt F\wh h$ is
identified with $\wh h$.
\end{proof}
\begin{remark}\label{SchurNev}
    If $(B_1, B_2, C_1, C_2, K)$ is a data set for the AIP in
    Nevanlinna classes then the data set $(T_1, T_2, M_1, M_2, K)$
    given by
\[
\begin{split}
T_1&=B_1-iB_2,\quad T_2=B_1+iB_2;\\
M_1&=C_1-iC_2,\quad M_2=C_1+iC_2,
\end{split}
\]
is a data set for the AIP in the Schur class. Recall (see~\cite{KKhYu}), that a
contractive $[\cL]$-valued function $\omega(\zeta)$ is said to be a solution of the
problem $AIP(T_1, T_2, M_1, M_2, K)$, if there exists a map $\Phi$ from $\cX$ to the de
Branges Rovnyak space $\cH_\omega$, such that
\[
(\Phi T_1h)(t)-t (\Phi T_2h)(t)=\left[%
\begin{array}{cc}
  I & -\omega(t) \\
  -\omega^*(t) & I \\
\end{array}%
\right]
\left[%
\begin{array}{cc}
 M_2h\\
 M_1h \\
\end{array}%
\right],
\]
and $\|\Phi h\|^2_{\cH_\omega}\le K(h,h)$ for all $h\in\cX$. One can
check, that solutions of the problems $AIP(B_1, B_2, C_1, C_2, K)$
and $AIP(T_1, T_2, M_1, M_2, K)$ are related via some linear
fractional transformation and the result of Theorem~\ref{thm:3.2}
can be derived from the corresponding result in~\cite{KKhYu}.
However, we prefer to give a direct proof based on the de Branges
Rovnyak space $\cH(\varphi,\psi)$, especially since these spaces
will  be usefull in applications to some interpolation problems.
\end{remark}
In general, the mapping $F:\cX\to\cH(\varphi,\psi)$ in (C1), (C2) is not uniquely
defined by the solution $\{\varphi,\psi\}$ of the abstract interpolation problem
$AIP(B_1, B_2, C_1, C_2, K)$. We impose an additional assumption on the data set:
\begin{enumerate}
\item[(U)]
    $B_2-\lambda B_1$ is an isomorphism in $\cX$ for $\lambda$ in
    nonempty domains $\cO_\pm\subset\dC_\pm$,
\end{enumerate}
which ensures the uniqueness of $F$. Let us set
\[
G(\lambda)=\begin{bmatrix} C_1\\ C_2 \end{bmatrix}(B_2-\lambda B_1)^{-1}\quad
(\lambda\in\cO_\pm).
\]
\begin{proposition}\label{Funique}
    Let the data set $(B_1, B_2, C_1, C_2, K)$ satisfies the
    assumptions (A1), (U). Then the mapping $F:\cX\to\cH(\varphi,\psi)$ in (C1), (C2) is
    uniquely defined by the solution $\{\varphi,\psi\}$ of the
    abstract interpolation problem $AIP(B_1, B_2, C_1, C_2, K)$ by
    the formula
\begin{equation}\label{FG0}
    (Fh)(\lambda)=\begin{bmatrix} \varphi(\lambda) &
    -\psi(\lambda)\end{bmatrix}G(\lambda)h\quad (\lambda\in\cO_\pm).
\end{equation}
\end{proposition}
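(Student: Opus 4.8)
The plan is to read the formula \eqref{FG0} straight off condition (C1) by inverting the pencil $B_2-\lambda B_1$, and then to upgrade the resulting pointwise identity on $\cO_\pm$ to an identity of functions by invoking the holomorphy of the members of $\cH(\varphi,\psi)$. Uniqueness will be an immediate consequence, since \eqref{FG0} determines $Fh$ explicitly in terms of the data and of $\{\varphi,\psi\}$ alone.

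First I would rewrite the left-hand side of (C1) as a single evaluation. Since $F$ is linear and, for fixed $\lambda$, the point evaluation $E(\lambda)$ is a (bounded) linear operator on $\cH(\varphi,\psi)$, we have
\[
(FB_2h)(\lambda)-\lambda(FB_1h)(\lambda)=\bigl(F(B_2-\lambda B_1)h\bigr)(\lambda),
\]
so that (C1) takes the form
\[
\bigl(F(B_2-\lambda B_1)h\bigr)(\lambda)=\begin{bmatrix}\varphi(\lambda)&-\psi(\lambda)\end{bmatrix}\begin{bmatrix}C_1h\\C_2h\end{bmatrix},\qquad h\in\cX .
\]

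Next I would use assumption (U). Fix $\lambda\in\cO_\pm$; then $B_2-\lambda B_1$ is an isomorphism of $\cX$, so every $g\in\cX$ is of the form $g=(B_2-\lambda B_1)h$ with $h=(B_2-\lambda B_1)^{-1}g$. Substituting this $h$ into the displayed reformulation of (C1) gives
\[
(Fg)(\lambda)=\begin{bmatrix}\varphi(\lambda)&-\psi(\lambda)\end{bmatrix}\begin{bmatrix}C_1\\C_2\end{bmatrix}(B_2-\lambda B_1)^{-1}g=\begin{bmatrix}\varphi(\lambda)&-\psi(\lambda)\end{bmatrix}G(\lambda)g,
\]
valid for every $g\in\cX$ and every $\lambda\in\cO_\pm$. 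This already determines the value $(Fg)(\lambda)$ at each point of $\cO_\pm$.

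Finally I would promote these pointwise values to the full function $Fg\in\cH(\varphi,\psi)$. Every element of $\cH(\varphi,\psi)$ is a holomorphic $\cL$-valued function on $\dC_+\cup\dC_-$, and $\cO_+$ (resp. $\cO_-$) is a nonempty open subset of the connected domain $\dC_+$ (resp. $\dC_-$). Hence, by the identity theorem, $Fg$ is uniquely determined on each half-plane by its restriction to $\cO_+\cup\cO_-$, and therefore $Fg$ is the unique element of $\cH(\varphi,\psi)$ whose values on $\cO_\pm$ are prescribed by \eqref{FG0}. Since this holds for all $g\in\cX$, the map $F$ is uniquely defined by $\{\varphi,\psi\}$. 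The only step requiring care is this last one: one must know that members of $\cH(\varphi,\psi)$ are holomorphic on each half-plane, so that the values prescribed on the nonempty open sets $\cO_\pm$ extend uniquely by analytic continuation. This is exactly where the reproducing-kernel structure of $\cH(\varphi,\psi)$ and the boundedness of the evaluation operators $E(\lambda)$ enter.
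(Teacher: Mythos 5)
Your proof is correct and follows essentially the same route as the paper: assumption (U) lets you substitute $h=(B_2-\lambda B_1)^{-1}g$ into (C1) and read off \eqref{FG0}; the paper performs the identical computation with a fixed $\mu\in\cO_\pm$, obtaining $(Fg)(\lambda)=\begin{bmatrix}\varphi(\lambda)&-\psi(\lambda)\end{bmatrix}G(\mu)g+(\lambda-\mu)(FB_1h_\mu)(\lambda)$ and then setting $\lambda=\mu$. Your closing paragraph, which uses holomorphy of the elements of $\cH(\varphi,\psi)$ and the identity theorem to note that the values prescribed on $\cO_\pm$ determine $Fg$ on all of $\dC_+\cup\dC_-$, makes explicit a point the paper leaves implicit, and is a sound supplement rather than a different argument.
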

\begin{proof}
Applying (C1) to the vector
\[
h=h_\mu:=(B_2-\mu B_1)^{-1}g\quad (\mu\in\cO_\pm,\,g\in\cX),
\]
one obtains
\begin{equation}\label{eq:3.90}
\begin{split}
(F g)(\lambda) &=(F{B_2} h_{\mu})(\lambda)-\mu(F{B_1} h_{\mu})(\lambda)\\
&=(F{B_2} h_{\mu})(\lambda)-\lambda(F{B_1} h_{\mu})(\lambda)+(\lambda-\mu)(F{B_1}h_{\mu})(\lambda)\\
&=\begin{bmatrix} \varphi(\lambda) &
    -\psi(\lambda)\end{bmatrix}G(\mu)g+(\lambda-\mu)(F{B_1}h_{\mu})(\lambda).\\
\end{split}
\end{equation}
Setting in~\eqref{eq:3.90} $\lambda=\mu$, one obtains
\begin{equation}\label{eq:3.100}
(Fg)(\mu))=\begin{bmatrix} \varphi(\mu) &
    -\psi(\mu)\end{bmatrix}G(\mu)g\quad
(\mu\in\cO_{\pm}, g\in\cX).
\end{equation}
\end{proof}

\section{Description of solutions of abstract interpolation problem.}
In view of Theorem~\ref{thm:3.2} a description of the set of solutions of the AIP is
reduced to the description of $\cL$-resolvents of the linear relation $\widehat{A}$. The
later problem can be solved within the theory of $\cL$-resolvent matrix \cite{Kr1,KrSa}.
In this section we will treat both the nondegenerate $(\ker K\neq\{0\})$ and the
degenerate case $(\ker K\neq\{0\})$. In the case when the form $K(\cdot,\cdot)$ is
degenerate it will be more convenient to calculate the resolvent matrix of some
auxiliary linear relation $A_0$ which is a restriction of $\wh A$.
\subsection{Symmetric linear relation $A$ and boundary triplet for $A^+$.}
In this section we impose some additional assumptions on the data
set $(B_1$, $B_2$, $C_1$, $C_2$, $K)$.
\begin{enumerate}
\item[(A2)] $\dim\ker K<\infty$ and $\cX$ admits the representation
\begin{equation}\label{eq:$.30}
\cX=\cX_0\dotplus\ker K,
\end{equation}
such that $B_j\cX_0\subseteq\cX_0$ $( j=1,2)$.
\item[(A3)] $B_2=I_\cX$ and the operators $B_1|_{\cX_0}:\cX_0\subseteq\cH\to\cH$,
$C_1|_{\cX_0}$, $C_2|_{\cX_0}:\cX_0\subseteq\cH\to\cL$ are bounded.
\end{enumerate}
Due to the assumption (A2)  one can identify $\cX_0$ with $\wh
\cX=\cX/\ker K$ and consider the space $\cH$ as a completion of
$\cX_0$. The continuations of the operators $B_1|_{\cX_0}$,
$C_1|_{\cX_0}$, $C_2|_{\cX_0}$ will be denoted by $\wt B_1\in[\cH]$,
$\wt C_1$, $\wt C_2\in[\cH,\cL]$.

Define a linear relation
\begin{equation}\label{eq:4.32}
A_0=\left\{\left\{\left[%
\begin{array}{c}
  B_1 h \\
  C_1 h \\
\end{array}%
\right],\left[%
\begin{array}{c}
  B_2 h \\
  C_2 h \\
\end{array}%
\right]\right\}:h\in\cX_0\right\}
\end{equation}
in a Hilbert space $
{\cH}\oplus\cL$. Clearly, $A_0$ is a restriction of the symmetric
linear relation $\wh{A}$ which can be rewritten as
\begin{equation}\label{eq:4.33}
\wh{A}=\left\{\left\{\left[%
\begin{array}{c}
  B_1 h +\widehat{B_1u}\\
  C_1 h+C_1 u \\
\end{array}%
\right],\left[%
\begin{array}{c}
  B_2 h +\widehat{B_2u}\\
  C_2 h+C_2 u \\
\end{array}%
\right]\right\}:h\in\cX_0,u\in\ker K\right\}.
\end{equation}
In view of (A3) the closures of $A_0$ and $\wh A$ take the form
\begin{equation}\label{eq:4.34}
A:=\mbox{clos }A_0=\left\{\left\{\left[%
\begin{array}{c}
  \wt B_1 h \\
  \wt C_1 h \\
\end{array}%
\right],\left[%
\begin{array}{c}
h \\
  \wt C_2 h \\
\end{array}%
\right]\right\}:h\in\cH\right\},
\end{equation}
\begin{equation}\label{eq:4.35}
\mbox{clos }\wh{A}=\left\{\left\{\left[%
\begin{array}{c}
  \wt B_1 h +\widehat{B_1u}\\
  \wt C_1 h+C_1 u \\
\end{array}%
\right],\left[%
\begin{array}{c}
   h\\
  \wt C_2 h+C_2 u \\
\end{array}%
\right]\right\}:h\in\cH,u\in\ker K\right\}.
\end{equation}

Recall the definition of the boundary triplet for  nondensely
defined symmetric operator.
\begin{definition}{\rm \cite{GG},~\cite{MM2}}
\label{BT} A triplet $\Pi=\{{\cL},{\Gamma}_1,{\Gamma}_2\}$, where $\Gamma_i:\wh A^+ \to
\cL$, $i=0,1$, is said to be a \textit{boundary triplet} for $\wh A^+$, if  for all $\wh
f=\{f,f'\}$, $\wh g=\{g,g'\} \in \wh A^+$;
  \[
  (f', g)_{\cH\oplus\cL}-(f, g')_{\cH\oplus\cL}
  = (\Gamma_1\wh f, \Gamma_2\wh g)_{\cL}- (\Gamma_2\wh f, \Gamma_1\wh g)_{\cL}
  \]
and the mapping $\Gamma:=\begin{bmatrix}\Gamma_1\\
\Gamma_0\end{bmatrix}:\, \wh A^+ \to \begin{bmatrix}{\cL}\\
{\cL}\end{bmatrix}$ is surjective.
\end{definition}
A point $\lambda\in\dC$ is said to be a {\it regular type point} for a closed symmetric
linear relation $A$ if $\ran(A-\lambda)$ is closed in $\cH\oplus\cL$. It is well known
that the set $\wh\rho(A)$ of regular type points for symmetric linear relation $A$
contains $\dC_+\cup\dC_-$ and the defect subspaces
\[
\sN_\lambda(A):=(\cH\oplus\cL)\ominus\ran(A-\bar\lambda)
\]
have the same dimensions $n_+(A)$ and $n_-(A)$ for $\lambda\in\dC_+$ and
$\lambda\in\dC_-$, respectively. The numbers $n_+(A)$ and $n_-(A)$ are called the {\it
defect numbers} of the symmetric linear relation $A$. In the following proposition we
show, that the symmetric linear relation $A$ in~\eqref{eq:4.34} has equal defect numbers
$n_+(A)=n_-(A)=\dim \cL$ and, moreover, $0\in\wh\rho(A)$.
\begin{proposition}\label{pr:4.1} Let the data
set $(B_1$, $B_2$, $C_1$, $C_2$, $K)$ satisfy the assumptions
(A1)-(A3). Then:
\begin{enumerate}
\item[(1)]
the adjoint linear relation ${A}^+$ takes the form
\begin{equation}\label{eg:4.2}
{A}^+=\left\{\wh g=\left\{\left[%
\begin{array}{c}
  g \\
  v \\
\end{array}%
\right],\left[%
\begin{array}{c}
  g' \\
  v' \\
\end{array}%
\right]\right\}:
\begin{array}{c}
 v,v'\in\cL,
g'\in\cH; \\
 g=\wt B_1^+ g'+\wt C_1^+ v'-\wt C_2^+ v \\
\end{array}
\right\}
\end{equation}
\item[(2)]
the set $\wh\rho(A)$ of regular type points for symmetric linear relation $A$ contains
the resolvent set of the linear relation $\wt B_1^{-1}$
\[
\rho(\wt B_1^{-1})=\{\lambda\in(\dC\setminus \{0\}):1/\lambda\in\rho(\wt
B_1)\}\cup\{0\}\mbox{ if }\wt B_1\in[\cH]
\]
and the defect subspace $\sN_{\lambda}({A})$ for $\lambda\in\rho(\wt B_1^{-1})$ consists
of the vectors
\begin{equation}\label{eq:4.1}
\left[%
\begin{array}{c}
  -F(\overline{\lambda})^+ u \\
  u \\
\end{array}%
\right],\quad u\in\cL,
\end{equation}
where $F(\lambda)=(\wt C_2-\lambda \wt C_1)(I_\cH-\lambda \wt
B_1)^{-1}$;
\item[(3)]
a boundary triplet $\Pi=\{\cL,\Gamma_1,\Gamma_2\}$ for $\widehat{A}^+$ can be defined by
\begin{equation}\label{eq:4.3}
\Gamma_1\widehat{g}=-v+\wt C_1{g}',\quad \Gamma_2\widehat{g}=v'-\wt C_2{g}'.
\end{equation}
\end{enumerate}
\end{proposition}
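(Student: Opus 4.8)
The plan is to verify the three parts in order, starting from the closed symmetric relation $A$ in~\eqref{eq:4.34} and its companion $\wh A$ in~\eqref{eq:4.35}, and to read off all the needed data from the defining identity $g'-\lambda g$-type relations together with the abstract Green identity.

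First I would establish part (1) by a direct adjoint computation. By definition a pair $\wh g=\{[g,v]^\top,[g',v']^\top\}$ lies in $A^+$ iff
\[
\left(\left[\begin{array}{c} h\\ \wt C_2 h\end{array}\right],\left[\begin{array}{c} g\\ v\end{array}\right]\right)_{\cH\oplus\cL}=\left(\left[\begin{array}{c} \wt B_1 h\\ \wt C_1 h\end{array}\right],\left[\begin{array}{c} g'\\ v'\end{array}\right]\right)_{\cH\oplus\cL}
\]
for every $h\in\cH$. Expanding both inner products and collecting the coefficient of $h$, this reads $(h,g)_\cH+(\wt C_2 h,v)_\cL=(\wt B_1 h,g')_\cH+(\wt C_1 h,v')_\cL$, i.e. $(h,\,g+\wt C_2^+ v)_\cH=(h,\,\wt B_1^+ g'+\wt C_1^+ v')_\cH$ for all $h$. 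Since $h$ ranges over all of $\cH$, this is equivalent to $g=\wt B_1^+ g'+\wt C_1^+ v'-\wt C_2^+ v$, which is exactly~\eqref{eg:4.2}. Here I use that $\wt B_1,\wt C_1,\wt C_2$ are bounded (assumption (A3)), so their adjoints $\wt B_1^+,\wt C_1^+,\wt C_2^+$ exist as bounded operators and the defining relation $\{[\wt B_1 h,\wt C_1 h],[h,\wt C_2 h]\}$ has dense graph, letting me take the adjoint without domain subtleties.

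For part (2) the defect subspace $\sN_\lambda(A)=(\cH\oplus\cL)\ominus\ran(A-\bar\lambda)$ consists of those $\{g,v\}\in A^+$ with $g'=\lambda g$ and $v'=\lambda v$. Feeding $g'=\lambda g$, $v'=\lambda v$ into the adjoint relation from part (1) gives $g=\wt B_1^+(\lambda g)+\wt C_1^+(\lambda v)-\wt C_2^+ v$, i.e. $(I_\cH-\lambda\wt B_1^+)g=(\lambda\wt C_1^+-\wt C_2^+)v$. For $\lambda\in\rho(\wt B_1^{-1})$ the operator $I_\cH-\lambda\wt B_1^+$ is boundedly invertible (this is the point where the spectral condition on $\wt B_1$ enters: $1/\bar\lambda\in\rho(\wt B_1)$ forces $I-\lambda\wt B_1^+=(I-\bar\lambda\wt B_1)^*$ invertible, and one checks closedness of $\ran(A-\bar\lambda)$ the same way), so $g$ is determined by $v$ and after taking adjoints the solution is $g=-F(\bar\lambda)^+ v$ with $F(\lambda)=(\wt C_2-\lambda\wt C_1)(I_\cH-\lambda\wt B_1)^{-1}$. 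This yields the stated form~\eqref{eq:4.1} and shows $\dim\sN_\lambda(A)=\dim\cL$ for such $\lambda$, hence $n_\pm(A)=\dim\cL$ and $0\in\wh\rho(A)$.

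For part (3) I would simply verify that the maps $\Gamma_1\wh g=-v+\wt C_1 g'$, $\Gamma_2\wh g=v'-\wt C_2 g'$ satisfy the abstract Green identity of Definition~\ref{BT} and are jointly surjective. Plugging in, the right-hand side $(\Gamma_1\wh f,\Gamma_2\wh g)_\cL-(\Gamma_2\wh f,\Gamma_1\wh g)_\cL$ expands into terms that must be matched against $(f',g)-(f,g')$ in $\cH\oplus\cL$; substituting the constraint $f=\wt B_1^+ f'+\wt C_1^+ u'-\wt C_2^+ u$ (and likewise for $g$) from part (1) and using the adjoint relations to move $\wt B_1,\wt C_1,\wt C_2$ across the inner products should make the two sides coincide after cancellation. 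Surjectivity of $\Gamma=[\Gamma_1;\Gamma_2]^\top$ follows because $v,v'$ are free parameters in $A^+$ and the map $(v,v')\mapsto(-v+\wt C_1 g',v'-\wt C_2 g')$ is clearly onto $\cL\oplus\cL$ for fixed $g'$. \textbf{The main obstacle} I anticipate is bookkeeping in the Green-identity verification of part~(3): one has to be careful that the boundary form of $A^+$, after substituting the description~\eqref{eg:4.2}, reduces precisely to the symplectic form in $\Gamma_1,\Gamma_2$ with no leftover $\wt B_1$ terms — this hinges on the original identity (A1) reappearing at the operator level, exactly as it did in the symmetry computation for $\wh A$ in Proposition~\ref{pr:3.1}.
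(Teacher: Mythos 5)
Your proposal is correct and follows essentially the same route as the paper's proof: part (1) by the same direct adjoint computation, part (2) by identifying $\sN_\lambda(A)$ with $\ker(A^+-\lambda)$ and solving $(I_\cH-\lambda\wt B_1^+)g=(\lambda\wt C_1^+-\wt C_2^+)v$, and part (3) by substituting the description of $A^+$ into the boundary form and cancelling the leftover $\wt B_1$-terms via the operator-level form of (A1), which is exactly the identity the paper invokes, followed by the same surjectivity argument. The only difference is that you leave the part-(3) bookkeeping as a plan rather than executing it, but the mechanism you single out is precisely what makes the paper's computation close, so there is no gap.
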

\begin{proof}
1) Let
\[
\widehat{g}=\left\{\left[%
\begin{array}{c}
  g \\
  v \\
\end{array}%
\right],\left[%
\begin{array}{c}
  g' \\
  v' \\
\end{array}%
\right]\right\}\in{A}^+\quad (g,g'\in\cH; v,v'\in\cL).
\]
Then it follows from~\eqref{eq:4.34} that
\[
(g',\wt B_1 h)_{\cH}-(g, h)_{\cH}+(v',\wt C_1 h)_{\cL}-(v,\wt C_2 h)_{\cL}=0 
\]
for all $h\in\cH$ and, therefore,
\[
\wt B_1^+ g'-g+\wt C_1^+ v'-\wt C_2^+ v=0.
\]
This gives the equality
\begin{equation}\label{eq:4.4}
g=\wt B_1^+ g'+\wt C_1^+ v'-\wt C_2^+ v.
\end{equation}

2) It follows from~\eqref{eq:4.34} that
\begin{equation}\label{eq:4.36}
A-\lambda=\left\{\left\{\left[%
\begin{array}{c}
  \wt B_1 h \\
  \wt C_1 h \\
\end{array}%
\right],\left[%
\begin{array}{c}
(I_\cH-\lambda \wt
B_1)h \\
 (\wt C_2-\lambda \wt C_1) h \\
\end{array}%
\right]\right\}:h\in\cH\right\},
\end{equation}
 and, hence
\[
\ran(A-\lambda)=\left\{\begin{bmatrix} h\\
F(\lambda) h\end{bmatrix}:\,h\in\cH\right\}
\]
where $F(\lambda)=(\wt C_2-\lambda \wt C_1)(I_\cH-\lambda \wt B_1)^{-1}$. Therefore,
$\ran(A-\lambda)$ is closed for all $\lambda\in\rho(\wt B_1^{-1})$.

If $\lambda\in\rho(\wt B_1^{-1})$ and
$\widehat{g}\in\widehat{\sN}_{\lambda}(\widehat{A})$ then $g'=\lambda g$, $v'=\lambda
v$. Substituting these equalities in~\eqref{eq:4.4} one obtains
\[
(I_\cH-\lambda \wt B_1^+)g=-(\wt C_2^+-\lambda \wt C_1^+)v.
\]
This proves the second statement since
\[
g=-F(\bar{\lambda})^+ v.
\]
3) For two vectors
\[
\wh f=\left\{\left[%
\begin{array}{c}
  f \\
  v \\
\end{array}%
\right],\left[%
\begin{array}{c}
  f' \\
  v' \\
\end{array}%
\right]\right\},\quad
\widehat{g}=\left\{\left[%
\begin{array}{c}
  g \\
  v \\
\end{array}%
\right],\left[%
\begin{array}{c}
  g' \\
  v' \\
\end{array}%
\right]\right\}\in A^+
\]
one obtains
\begin{equation}\label{eq:4.5}
\begin{split}
( f',g)_\cH&-( f,g')_\cH+(u',v)_\cL-(u,v')_\cL=(u',v)_\cL-(u,v')_\cL
\\
&+(f',\wt B_1^+g'+\wt
 C_1^+v'-\wt C_2^+v)_\cH-(\wt B_1^+f'+\wt C_1^+u-\wt C_2^+u',g')_\cH.
\end{split}
\end{equation}
Then the right hand part of~\eqref{eq:4.5} takes the form
\[
\begin{split}
(\wt B_1f',g')_\cH&-(f',\wt B_1g')_\cH+(\wt C_1f',v')_\cL-(\wt C_2f',v)_\cL\\
&-(u',\wt C_1g')_\cL+(u,\wt C_2g')_\cL+(u',v)_\cL-(u,v')_\cL\\
&=(\wt C_2f',\wt C_1g')_\cL-(\wt C_1f',\wt C_2g')_\cL+(\wt C_1f',v')_\cL-(\wt C_2f',v)_\cL\\
&-(u',\wt C_1g')_\cL+(u,\wt C_2g')_\cL+(u',v)_\cL-(u,v')_\cL\\
&=(\wt C_2f'-u',\wt C_1g'-v)_\cL-(\wt C_1f'-u,\wt C_2g'-v')_\cL.
\end{split}
\]
Clearly, the mapping $\Gamma: A^+\to\cL\oplus\cL$ is surjective and hence the triplet
$\{\cL,\Gamma_1,\Gamma_2\}$  is a boundary triplet for $A^+$.
\end{proof}

\subsection{$\cL$-resolvent matrix} Recall some facts from M.G.~Krein~s
representation theory (\cite{Kr49}, \cite{DM95}) for symmetric linear relation $A$ in
$\cH\oplus\cL$. Let us say that $\lambda\in\rho(A,\cL)$ if $\lambda$ is a regular type
point for $ A$ and
\begin{equation}\label{eq:4.6}
\cH\oplus\cL=\ran( A-\lambda)\dotplus\cL.
\end{equation}
For every $\lambda\in\rho( A,\cL)$ the operator valued function
$\cP(\lambda):\cH\rightarrow\cL$ is defined as a skew projection
onto $\cL$ in the decomposition~\eqref{eq:4.6} and
$\cQ(\lambda):\cH\rightarrow\cL$ is given by
\begin{equation}\label{eq:4.7}
\cQ(\lambda)=P_{\cL}(A-\lambda)^{-1}(I-\cP(\lambda)),\quad
\lambda\in\rho( A,\cL).
\end{equation}
Let the operator-valued functions $\wh\cP(\lambda)^+$ and $\wh\cQ(\lambda)^+$ be given
by
\begin{equation}\label{eq:4.10}
\wh\cP(\lambda)^+u=\{\cP(\lambda)^+u,\bar{\lambda}\cP(\lambda)^+u\},\quad u\in\cL,
\end{equation}
\begin{equation}\label{eq:4.11}
\wh\cQ(\lambda)^+u=\{\cQ(\lambda)^+u,u+\bar{\lambda}\cQ(\lambda)^+u\}, \quad u\in\cL,
\end{equation}
where $\wh\cP(\lambda)^+$, $\wh\cQ(\lambda)^+:\cL\to\cH$ are adjoint operators to
$\cP(\lambda),\,\cQ(\lambda):\cH\rightarrow\cL$.   The role of the ovf's $\cP(\lambda)$,
$\cQ(\lambda)$ is clear from the following theorem.
\begin{theorem}\label{thm:4.2}{\rm \cite{Kr1,KrSa,DM95}}.
Let $\Pi=\{\cL,\Gamma_1,\Gamma_2\}$ be a boundary triplet for $A^+ $. The set of
$\cL$-resolvents of $ A$ is parametrized by the formula
\begin{equation}\label{eq:4.8}
P_{\cL}(\wt{A}-\lambda)^{-1}|_\cL
=(w_{11}(\lambda)q(\lambda)+w_{12}(\lambda)p(\lambda))
(w_{21}(\lambda)q(\lambda)+w_{22}(\lambda)p(\lambda))^{-1}
\end{equation}
where $(p,q)$ ranges over the set $\wt{\cN}(\cL)$ of Nevanlinna
pairs and the block matrix $W(\lambda)=(w_{ij}(\lambda))_{i,i=1}^2$
is given by
\begin{equation}\label{eq:4.9}
W_{\Pi\cL}(\lambda)=\left[%
\begin{array}{cc}
  -\Gamma_2\wh\cQ(\lambda)^* & \Gamma_2\wh\cP(\lambda)^*  \\
  -\Gamma_1\wh\cQ(\lambda)^*  & \Gamma_1\wh\cQ(\lambda)^*  \\
\end{array}%
\right]^*\quad \lambda\in\rho(\wh A,\cL).
\end{equation}
\end{theorem}
 In the following Theorem we calculate all the objects of M.G.Krein
representation theory $\cP(\lambda)$, $\cQ(\lambda)$ and the $\cL$-resolvent matrix
$W_{\Pi\cL}(\lambda)$ for the linear relation $ A$ in~\eqref{eq:4.34}.

\begin{theorem}\label{thm:4.3}
Let $B_1,B_2,C_1,C_2,K$ satisfy the assumptions $(A1)-(A3)$. Then:
\begin{enumerate}
\item
$ \rho( A,\cL)=\rho(B_1^{-1})$ and for $\lambda\in\rho( A,\cL),$ the ovf's
$\cP(\lambda)$, $\cQ(\lambda)$ are given by
\begin{equation}\label{eq:4.20}
\cP(\lambda)\left[%
\begin{array}{c}
  f \\
  u \\
\end{array}%
\right]=u-F(\lambda)f, \quad f\in\cH, \quad u\in\cL,
\end{equation}
\begin{equation}\label{eq:4.21}
\cQ(\lambda)\left[%
\begin{array}{c}
  f \\
  u \\
\end{array}%
\right]=\wt C_1(I_\cH-\lambda \wt B_1)^{-1}f, \quad f\in\cH;
\end{equation}
\item The adjoint operators to $\cP(\lambda),\cQ(\lambda):\left[%
\begin{array}{c}
  \cH \\
  \cL \\
\end{array}%
\right]\rightarrow\cL$ take the form
\begin{equation}\label{eq:4.22}
\cP(\lambda)^+u=\left[%
\begin{array}{c}
  -F(\lambda)^+u \\
  u \\
\end{array}%
\right] \quad u\in\cL,\,\lambda\in\rho( A,\cL),
\end{equation}
\begin{equation}\label{eq:4.23}
\cQ(\lambda)^+u=\left[%
\begin{array}{c}
 (I_\cH-\bar{\lambda}\wt B_1^+)^{-1}\wt C_1^+u \\
  0 \\
\end{array}%
\right] \quad u\in\cL,\,\lambda\in\rho(A,\cL);
\end{equation}
\item
The $\cL$-resolvent matrix $W_{\Pi\cL}(\lambda)$ corresponding to
the boundary triplet $\Pi$ takes the form
\begin{equation}\label{eq:4.24}
W_{\Pi\cL}(\lambda)=\left[%
\begin{array}{cc}
  -I_\cL & 0 \\
  \lambda & -I_\cL \\
\end{array}%
\right]\left( I_{\cL\oplus\cL}+i\lambda\left[%
\begin{array}{c}
  \wt C_1 \\
  \wt C_2 \\
\end{array}%
\right](I_\cH-\lambda \wt B_1)^{-1}\left[%
\begin{array}{c}
  \wt C_1 \\
  \wt C_2 \\
\end{array}%
\right]^+J \right).
\end{equation}
\end{enumerate}
\end{theorem}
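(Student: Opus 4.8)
The plan is to read all three items off the explicit description of $A-\lambda$ in~\eqref{eq:4.36} and then substitute the resulting operators into M.G.~Kre\u{\i}n's formula~\eqref{eq:4.9}. Throughout I abbreviate $R=(I_\cH-\lambda\wt B_1)^{-1}$ and $S=R^+=(I_\cH-\bar\lambda\wt B_1^+)^{-1}$, so that $F(\lambda)^+=S(\wt C_2^+-\bar\lambda\wt C_1^+)$. For item~(1) I would start from the fact, already in the proof of Proposition~\ref{pr:4.1}, that $\ran(A-\lambda)$ consists of all vectors $\left[\begin{smallmatrix} f\\ F(\lambda)f\end{smallmatrix}\right]$ with $f\in\cH$, a description that is meaningful precisely when $I_\cH-\lambda\wt B_1$ is boundedly invertible, i.e. for $\lambda\in\rho(\wt B_1^{-1})$. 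For such $\lambda$ the splitting
\[
\left[\begin{array}{c} f\\ u\end{array}\right]=\left[\begin{array}{c} f\\ F(\lambda)f\end{array}\right]+\left[\begin{array}{c} 0\\ u-F(\lambda)f\end{array}\right]
\]
exhibits $\cH\oplus\cL=\ran(A-\lambda)\dotplus(\{0\}\oplus\cL)$, the sum being direct since $\left[\begin{smallmatrix} f\\ F(\lambda)f\end{smallmatrix}\right]\in\{0\}\oplus\cL$ forces $f=0$; analyzing when this decomposition~\eqref{eq:4.6} is available identifies $\rho(A,\cL)$ with $\rho(\wt B_1^{-1})$ and simultaneously produces the skew projection~\eqref{eq:4.20}. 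Inverting $A-\lambda$ on its range then carries $\left[\begin{smallmatrix} f\\ F(\lambda)f\end{smallmatrix}\right]$ to $\left[\begin{smallmatrix} \wt B_1Rf\\ \wt C_1Rf\end{smallmatrix}\right]$, and projecting the $\cL$-coordinate gives~\eqref{eq:4.21}.

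Item~(2) is then a one-line adjoint check for each operator. Expanding $(\cP(\lambda)\left[\begin{smallmatrix} f\\ u\end{smallmatrix}\right],w)_\cL=(u,w)_\cL-(f,F(\lambda)^+w)_\cH$ and comparing with the inner product in $\cH\oplus\cL$ produces~\eqref{eq:4.22}; the identical manipulation applied to~\eqref{eq:4.21}, together with $R^+=S$, produces~\eqref{eq:4.23}.

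Item~(3) is the heart of the theorem and the step I expect to be the main obstacle. The plan is to insert~\eqref{eq:4.22} and~\eqref{eq:4.23} into the hat-constructions~\eqref{eq:4.10},~\eqref{eq:4.11} and then apply the boundary maps~\eqref{eq:4.3}. Reading off the components of $\wh\cP(\lambda)^+u$ (namely $v=u$, $g'=-\bar\lambda F(\lambda)^+u$, $v'=\bar\lambda u$) gives $\Gamma_1\wh\cP(\lambda)^+u=-u-\bar\lambda\wt C_1F(\lambda)^+u$ and $\Gamma_2\wh\cP(\lambda)^+u=\bar\lambda u+\bar\lambda\wt C_2F(\lambda)^+u$, while $\wh\cQ(\lambda)^+u$ yields $\Gamma_1\wh\cQ(\lambda)^+u=\bar\lambda\wt C_1S\wt C_1^+u$ and $\Gamma_2\wh\cQ(\lambda)^+u=u-\bar\lambda\wt C_2S\wt C_1^+u$. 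Feeding these four operators into~\eqref{eq:4.9} and taking the outer adjoint is where the bookkeeping is most delicate: using $S^+=R$ and $F(\lambda)^+=S(\wt C_2^+-\bar\lambda\wt C_1^+)$ one must convert every $S$ back to $R$ and every $\bar\lambda$ to $\lambda$. To finish, I would independently expand the right-hand side of~\eqref{eq:4.24}---form the block $\left[\begin{smallmatrix} \wt C_1\\ \wt C_2\end{smallmatrix}\right]R\left[\begin{smallmatrix} \wt C_1\\ \wt C_2\end{smallmatrix}\right]^+$, multiply on the right by $J$ and then by $i\lambda$, add $I_{\cL\oplus\cL}$, and left-multiply by $\left[\begin{smallmatrix} -I_\cL & 0\\ \lambda & -I_\cL\end{smallmatrix}\right]$---and match the four resulting blocks against the adjoint computed above. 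The only genuinely nontrivial point beyond linear algebra is tracking the imaginary unit carried by $J$: it is exactly this factor that turns the combinations $\wt C_1R\wt C_2^+$ and $\wt C_2R\wt C_1^+$ left by the adjoint into the off-diagonal pattern demanded by~\eqref{eq:4.24}.
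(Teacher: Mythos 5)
Your proposal is correct and follows essentially the same route as the paper: solving the two equations behind the decomposition~\eqref{eq:4.6} to get $\rho(A,\cL)=\rho(B_1^{-1})$ and the formulas~\eqref{eq:4.20}--\eqref{eq:4.21}, verifying the adjoints directly, and then computing the four boundary values $\Gamma_j\wh\cP(\lambda)^+u$, $\Gamma_j\wh\cQ(\lambda)^+u$ (your expressions coincide exactly with the paper's~\eqref{eq:4.26}--\eqref{eq:4.29}) before substituting into Kre\u{\i}n's formula~\eqref{eq:4.9} and taking the adjoint. The only cosmetic difference is that the paper finishes by factoring the resulting matrix into the form~\eqref{eq:4.24}, whereas you verify the same identity by expanding the right-hand side and matching blocks.
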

\begin{proof}
1) Assume that $\lambda\in\rho(A,\cL)$ and the
decomposition~\eqref{eq:4.6} holds. Then for $f\in\cH$, $u\in\cL$
there are unique $h\in\cH$ and $v\in\cL$ such that
\begin{equation}\label{rAL}
  (I_\cH-\lambda \wt B_1)h=f,\quad (\wt C_2-\lambda \wt C_1)h+v=u.
\end{equation}
This implies, in particular, that $\lambda\in\rho(B_1^{-1})$.
Conversely, if $\lambda\in\rho(B_1^{-1})$, then the
equations~\eqref{rAL} have unique solutions $h\in\cH$ and $v\in\cL$
and, hence, $\lambda\in\rho(A,\cL)$. In view of~\eqref{rAL} these
solutions take the form
\begin{equation}\label{eq:4.25}
h=(I_\cH-\lambda \wt B_1)^{-1}f, \quad v=\cP(\lambda)\left[%
\begin{array}{c}
  f \\
  u \\
\end{array}%
\right]= u-F(\lambda)f.
\end{equation}
It follows from~\eqref{eq:4.7}, \eqref{eq:4.25} and~\eqref{eq:4.36}
that
\[
\begin{split}
\cQ(\lambda)\left[%
\begin{array}{c}
  f \\
  u \\
\end{array}%
\right] &=P_{\cL}(A-\lambda)^{-1}\left[%
\begin{array}{c}
  f \\
  F(\lambda)f \\
\end{array}%
\right]=P_{\cL}\left[%
\begin{array}{c}
  \wt B_1 h \\
  \wt C_1 h \\
\end{array}%
\right]\\
&=\wt C_1(I_\cH-\lambda \wt B_1)^{-1}f.
\end{split}
\]

 2) The formulas~\eqref{eq:4.22},~\eqref{eq:4.23} are
implied by
\[
\begin{split}
\left\langle\cP(\lambda)^+v,\left[%
\begin{array}{c}
  f \\
  u \\
\end{array}%
\right]\right\rangle_{\cH\oplus\cL}&=(v,u-F(\lambda)f)_\cL\\
&=\left\langle\left[%
\begin{array}{c}
  -F(\lambda)^+v \\
  v \\
\end{array}%
\right],\left[%
\begin{array}{c}
  f \\
  u \\
\end{array}%
\right]\right\rangle_{\cH\oplus\cL}, \end{split}
\]
\[
\begin{split}
\left\langle\cQ(\lambda)^+v,\left[%
\begin{array}{c}
  f \\
  u \\
\end{array}%
\right]\right\rangle_{\cH\oplus\cL}&=( v,\wt C_1(I_\cH-\lambda
\wt B_1)^{-1}f)_\cL\\
&=\left\langle\left[%
\begin{array}{c}
  (I_\cH-\lambda\wt B_1^+)^{-1}\wt C_1^+v \\
  0 \\
\end{array}%
\right],\left[%
\begin{array}{c}
  f \\
  u \\
\end{array}%
\right]\right\rangle_{\cH\oplus\cL}.
\end{split}
\]
Now one obtains from~\eqref{eq:4.10},~\eqref{eq:4.22}
and~\eqref{eq:4.3} that
\begin{equation}\label{eq:4.26}
\Gamma_2\wh{\cP}(\lambda)^+ v=\bar\lambda v+\bar{\lambda}\wt C_2 F(\lambda)^+ v,
\end{equation}
\begin{equation}\label{eq:4.27}
\Gamma_1\wh{\cP}(\lambda)^+ v=-v-\bar{\lambda}\wt C_1 F(\lambda)^+ v.
\end{equation}
Similarly~\eqref{eq:4.11},~\eqref{eq:4.23} and~\eqref{eq:4.3} imply
\begin{equation}\label{eq:4.28}
\Gamma_2\wh{\cQ}(\lambda)^+ v= v-\bar{\lambda}\wt C_2 (I_\cH-\lambda\wt B_1^+)^{-1}\wt
C_1^+ v,
\end{equation}
\begin{equation}\label{eq:4.29}
\Gamma_1\wh{\cQ}(\lambda)^+ v=\bar{\lambda}\wt C_1 (I_\cH-\lambda\wt B_1^+)^{-1}\wt
C_1^+ v.
\end{equation}
It follows from~\eqref{eq:4.26}-~\eqref{eq:4.29} and~\eqref{eq:4.9}
that
\[
W_{\Pi\cL}(\lambda)^*=\left[%
\begin{array}{cc}
  -I_\cL & \bar\lambda \\
  0 & -I_\cL \\
\end{array}%
\right]+\bar\lambda\left[%
\begin{array}{c}
  \wt C_2 \\
  -\wt C_1 \\
\end{array}%
\right](I_\cH-\bar\lambda\wt B_1^+)^{-1} \begin{bmatrix}\wt C_1^+ & \wt
C_2^+-\bar{\lambda}\wt C_1^+\end{bmatrix}
\]
and hence
\[
\begin{split}
W_{\Pi\cL}(\lambda)
&=\left[%
\begin{array}{cc}
  -I & 0 \\
  \lambda & -I \\
\end{array}%
\right]+\lambda\left[%
\begin{array}{c}
  \wt C_1 \\
  \wt C_2-\lambda \wt C_1 \\
\end{array}%
\right](I_\cH-\lambda \wt B_1)^{-1}\begin{bmatrix}\wt C_2^+ &
-\wt C_1^+\end{bmatrix}\\
&=\left[%
\begin{array}{cc}
  -I & 0 \\
  \lambda & -I \\
\end{array}%
\right]\left( I_{\cL\oplus\cL}-\lambda\left[%
\begin{array}{c}
  \wt C_1 \\
  \wt C_2 \\
\end{array}%
\right](I_\cH-\lambda \wt B_1)^{-1}\begin{bmatrix}\wt C_2^+ & -\wt
C_1^+\end{bmatrix}\right).
\end{split}
\]
\end{proof}
\begin{corollary}\label{cor:4.4}
Let the data set $(B_1,B_2,C_1,C_2,K)$ satisfy $(A1),(A2)$ and
\begin{enumerate}
\item[$(A3')$] the operators $B_1|_{\cX_0},B_2|_{\cX_0}:\cX_0\subset\cH\to\cH$,
$C_1|_{\cX_0}$, $C_2|_{\cX_0}:\cX_0\subset\cH\to\cL$ are bounded,
$(B_2-\mu B_1)\cX=\cX$  for some $\mu\in\dR$, and the continuations
$\wt B_1$, $\wt B_2$ of the operators $B_1|_{\cX_0}$, $B_2|_{\cX_0}$
satisfy the condition $0\in\rho(\wt B_2-\mu \wt B_1)$.
\end{enumerate}
Then one of the $\cL$-resolvent matrices can be found from
\begin{equation}\label{eq:W.26}
\begin{split}
&\left[%
\begin{array}{cc}
  -I & 0 \\
  \lambda & -I \\
\end{array}%
\right]^{-1}W^{\mu}(\lambda)
\\
&=I+i(\lambda-\mu)\left[%
\begin{array}{c}
  \wt C_1 \\
  \wt C_2 \\
\end{array}%
\right](\wt B_2-\lambda \wt B_1)^{-1}(\wt B_2^+-\mu \wt B_1^+)^{-1}\left[%
\begin{array}{c}
  \wt C_1 \\
  \wt C_2 \\
\end{array}%
\right]^+J .
\end{split}
\end{equation}
\end{corollary}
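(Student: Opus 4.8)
The plan is to reduce the assertion to Theorem~\ref{thm:4.3}, which already settles the case $B_2=I_\cX$, by passing to the shifted relation $A-\mu$ and reparametrizing its defining vectors so that the lower component of the range vector becomes the identity. Since $\mu\in\dR$, the relation $A-\mu$ is again symmetric and its selfadjoint exit-space extensions are exactly the $\wt A-\mu$, where $\wt A$ runs over the selfadjoint extensions of $A$. Writing $A=\clos A_0$ as in~\eqref{eq:4.32} (with the continuation $\wt B_2$ now a general bounded operator rather than $I_\cH$) and forming $A-\mu$, I would substitute $h=(\wt B_2-\mu\wt B_1)^{-1}g$, which is legitimate by $(A3')$ since $0\in\rho(\wt B_2-\mu\wt B_1)$. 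Then $A-\mu$ takes precisely the form~\eqref{eq:4.34} for the bounded operators
\[
\wt B_1^\mu=\wt B_1(\wt B_2-\mu\wt B_1)^{-1},\quad \wt C_1^\mu=\wt C_1(\wt B_2-\mu\wt B_1)^{-1},\quad \wt C_2^\mu=(\wt C_2-\mu\wt C_1)(\wt B_2-\mu\wt B_1)^{-1}.
\]

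Next I would apply Theorem~\ref{thm:4.3} to $A-\mu$, obtaining its $\cL$-resolvent matrix $W^{(\mu)}(z)$ at the spectral point $z$ from~\eqref{eq:4.24} with $(\wt B_1,\wt C_1,\wt C_2)$ replaced by $(\wt B_1^\mu,\wt C_1^\mu,\wt C_2^\mu)$ and $\lambda$ by $z$. To transport this back to $A$, I would use $P_\cL(\wt A-\lambda)^{-1}|_\cL=P_\cL((\wt A-\mu)-(\lambda-\mu))^{-1}|_\cL$, so that the $\cL$-resolvents of $A$ at $\lambda$ coincide with those of $A-\mu$ at $z=\lambda-\mu$; consequently an $\cL$-resolvent matrix for $A$ is obtained from $W^{(\mu)}(\lambda-\mu)$ up to a constant $J$-unitary right factor coming from the reparametrization of the Nevanlinna pairs under the shift. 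This is exactly the source of the phrase ``one of the $\cL$-resolvent matrices.''

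The core computation, after setting $z=\lambda-\mu$, is the resolvent identity
\[
(I_\cH-(\lambda-\mu)\wt B_1^\mu)^{-1}=(\wt B_2-\mu\wt B_1)(\wt B_2-\lambda\wt B_1)^{-1},
\]
obtained by factoring $(\wt B_2-\mu\wt B_1)^{-1}$ on the right of $I_\cH-(\lambda-\mu)\wt B_1(\wt B_2-\mu\wt B_1)^{-1}$. Substituting this, the inner kernel of $W^{(\mu)}(\lambda-\mu)$ collapses to
\[
\begin{bmatrix}\wt C_1 \\ \wt C_2-\mu\wt C_1\end{bmatrix}(\wt B_2-\lambda\wt B_1)^{-1}(\wt B_2^+-\mu\wt B_1^+)^{-1}\begin{bmatrix}\wt C_1^+ & \wt C_2^+-\mu\wt C_1^+\end{bmatrix},
\]
which is the kernel of~\eqref{eq:W.26} up to the replacement of $\begin{bmatrix}\wt C_1 \\ \wt C_2\end{bmatrix}$ by $\begin{bmatrix}\wt C_1 \\ \wt C_2-\mu\wt C_1\end{bmatrix}$.

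Finally I would absorb the nuisance $\mu$-terms by the constant matrices $U=\begin{bmatrix} I & 0 \\ -\mu & I\end{bmatrix}$ and $V_\mu=U^{-1}=\begin{bmatrix} I & 0 \\ \mu & I\end{bmatrix}$. The column above equals $U\begin{bmatrix}\wt C_1 \\ \wt C_2\end{bmatrix}(\cdots)\begin{bmatrix}\wt C_1^+ & \wt C_2^+\end{bmatrix}U^*$, and using the elementary commutation $U^*J=JV_\mu$ together with $UV_\mu=I$ one gets $I+i(\lambda-\mu)\,U(\cdots)V_\mu=U\bigl(I+i(\lambda-\mu)M(\lambda)\bigr)V_\mu$, where $M(\lambda)$ is the kernel of~\eqref{eq:W.26}. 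Since $\begin{bmatrix} -I & 0 \\ \lambda-\mu & -I\end{bmatrix}U=\begin{bmatrix} -I & 0 \\ \lambda & -I\end{bmatrix}$, this yields $W^{(\mu)}(\lambda-\mu)=\begin{bmatrix} -I & 0 \\ \lambda & -I\end{bmatrix}\bigl(I+i(\lambda-\mu)M(\lambda)\bigr)V_\mu$, and a direct check gives $V_\mu^*JV_\mu=J$, so $V_\mu$ is $J$-unitary. Dropping it, i.e. taking the admissible resolvent matrix $W^\mu(\lambda):=W^{(\mu)}(\lambda-\mu)V_\mu^{-1}$, gives exactly~\eqref{eq:W.26}. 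The step I expect to be the main obstacle is not the algebra, which is routine once the resolvent identity is in hand, but the bookkeeping in the second step: justifying that the passage from $A$ to $A-\mu$ alters the $\cL$-resolvent matrix only through a constant $J$-unitary right factor, so that $W^\mu(\lambda)$ is a genuine $\cL$-resolvent matrix of $A$.
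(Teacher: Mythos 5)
Your proposal is correct and follows the paper's own proof essentially step for step: the paper likewise passes to the shifted relation $A-\mu$ via the data set $(B_1(B_2-\mu B_1)^{-1},\, I_\cX,\, C_1(B_2-\mu B_1)^{-1},\, (C_2-\mu C_1)(B_2-\mu B_1)^{-1},\, K)$, applies Theorem~\ref{thm:4.3} (formula~\eqref{eq:4.24}) to it, sets $W^{\mu}(\lambda)=W(\lambda-\mu)$, and arrives at the same identity with the constant factor $\left[\begin{smallmatrix} I & 0\\ \mu & I\end{smallmatrix}\right]$ on the right. The only place you are more explicit than the paper is the final step, where you verify that this factor is $J$-unitary and may therefore be dropped; the paper leaves this implicit in the phrase ``one of the $\cL$-resolvent matrices'' (and note that, strictly speaking, the $J$-unitary factor arises from rewriting the kernel in terms of $\wt C_2$ rather than $\wt C_2-\mu\wt C_1$, not from the shift of the Nevanlinna parameters, as your own computation in fact shows).
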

\begin{proof}
The data set
\[
(B_1(B_2-\mu B_1)^{-1}, I_\cX, C_1(B_2-\mu B_1)^{-1}, (C_2-\mu C_1)(B_2-\mu
B_1)^{-1},K)
\]
satisfies the assumptions (A1)-(A3). Consider the linear relation $ A-\mu$
\[
A-\mu=\left\{\left\{\left[%
\begin{array}{cc}
  \wt B_1(\wt B_2-\mu \wt B_1)^{-1} h \\
  \wt C_1(\wt B_2-\mu \wt B_1)^{-1} h \\
\end{array}%
\right],\left[%
\begin{array}{c}
  h \\
  (\wt C_2-\mu \wt C_1)(\wt B_2-\mu \wt B_1)^{-1}h \\
\end{array}%
\right]\right\}:h\in\cH\right\}.
\]
 Due to~\eqref{eq:4.25} its
$\cL$-resolvent matrix $W(\lambda)$ satisfies the equality
\[
\begin{split}
&\left[%
\begin{array}{cc}
  -I & 0 \\
  \lambda & -I \\
\end{array}%
\right]^{-1}W(\lambda)=\\
&=I_{\cL\oplus\cL}+i\lambda\left[%
\begin{array}{c}
  \wt C_1 \\
  \wt C_2-\mu \wt C_1 \\
\end{array}%
\right](\wt B_2-(\lambda+\mu)\wt B_1)^{-1}(\wt B_2^+-\mu\wt B_1^+)^{-1}\left[%
\begin{array}{c}
  \wt C_1 \\
  \wt C_2-\mu\wt C_1 \\
\end{array}%
\right]^+J .
\end{split}
\]
Then the matrix $W^{\mu}(\lambda)=W(\lambda-\mu)$ is the $\cL$-resolvent matrix of $A$
and, hence,
\[
\begin{split}
&\left[%
\begin{array}{cc}
  -I & 0 \\
  \lambda & -I \\
\end{array}%
\right]^{-1}W^{\mu}(\lambda)=\left[%
\begin{array}{cc}
  -I & 0 \\
  \lambda & -I \\
\end{array}%
\right]^{-1}W(\lambda-\mu)\\
&=\left(I_{\cL\oplus\cL}+i(\lambda-\mu)\left[%
\begin{array}{c}
  \wt C_1 \\
  \wt C_2\\
\end{array}%
\right](\wt B_2-\lambda \wt B_1)^{-1}(\wt B_2^+-\mu \wt B_1^+)^{-1}
\left[%
\begin{array}{c}
  \wt C_1 \\
  \wt C_2 \\
\end{array}%
\right]^+J\right)\left[%
\begin{array}{cc}
  I & 0 \\
  \mu & I \\
\end{array}%
\right].
\end{split}
\]
\end{proof}
\subsection{$\cL$-resolvents of $\wh A$.}
In the case when $\ker P$ is nontrivial we calculated the $\cL$-resolvent matrix of the
linear relation $A_0(\subset\wh A)$. A Description of $\cL$-resolvents of $A$ is given
in Theorem~\ref{thm:4.2}. In order to obtain a description of $\cL$-resolvents of $\wh
A$ we will use the same formula and specify the set of parameters $\{p,g\}\in\wt N(\cL)$
which correspond to $\cL$-resolvents of $\wh A$ via~\eqref{eq:4.8}.

Recall (see~\cite{Str}) that every generalized resolvent $P_\cH(\wt A-\lambda)^{-1}|\cH$
of $A$ corrresponding to an exit space selfadjoint extension $\wt A$ in a Hilbert space
$\wt \cH=\cH\oplus\cH_1$ can be represented as
\begin{equation}\label{GR}
P_\cH(\wt A-\lambda)^{-1}|\cH=(T(\lambda)-\lambda)^{-1},\quad \lambda\in\dC_+,
\end{equation}
where $T(\lambda)$ $(\lambda\in\dC_+)$ is the {\it Strauss family} of maximal
dissipative linear relations in $\cH$ defined by
\begin{equation}\label{StrE}
    T(\lambda)=\{\{Pf,Pf'\}:\{f,f'\}\in\wt A,\,f'-\lambda f\in\cH\}.
\end{equation}

\begin{proposition}\label{StrFam}
{\rm (\cite{Bruk76}, \cite{DM95})} Let $\wt A$ be an exit space selfadjoint extension of
$A$, let $T(\lambda)$ be the Strauss family of maximal dissipative linear relations
defined by~\eqref{StrE}, let $\{\cL,\Gamma_1,\Gamma_2\}$ be a boundary triplet for
$A^+$. Then the pair $\{p,q\}\in\wt N(\cL)$ is the Nevanlinna pair corresponding to $\wt
A$ via ~\eqref{eq:4.8} if and only if the pair $\{p,q\}$ is related to $T(\lambda)$ via
the formula
\[
\Gamma T(\lambda)=\ran\begin{bmatrix}q(\lambda)\\
p(\lambda)\end{bmatrix} .
\]
\end{proposition}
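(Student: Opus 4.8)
The plan is to route everything through the Straus family $T(\lambda)$ and the boundary triplet of Proposition~\ref{pr:4.1}. I will first record that, by \eqref{GR}--\eqref{StrE} and the cited results of \cite{Bruk76,DM95}, for $\lambda\in\dC_+$ the relation $T(\lambda)$ is a maximal dissipative extension with $A\subseteq T(\lambda)\subseteq A^+$; thus $\Gamma T(\lambda)$ is meaningful, and since the projection $P_\cL$ onto $\cL$ factors through the projection $P$ onto the base space $\cH\oplus\cL$ occurring in \eqref{StrE}, one has $P_\cL(\wt A-\lambda)^{-1}|_\cL=P_\cL(T(\lambda)-\lambda)^{-1}|_\cL$. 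Next I would show that $\theta(\lambda):=\Gamma T(\lambda)$ is (the graph of) a Nevanlinna family. Taking $\wh g=\wh f$ in the Green identity of Definition~\ref{BT} gives $\IM(f',f)=\IM(\Gamma_1\wh f,\Gamma_2\wh f)$ for all $\wh f=\{f,f'\}\in A^+$, so the dissipativity of $T(\lambda)$ passes to $\theta(\lambda)$, while surjectivity of $\Gamma$ yields maximality in $\cL\oplus\cL$; holomorphy of $\lambda\mapsto(T(\lambda)-\lambda)^{-1}$ on $\dC_+$ and the symmetry $\theta(\bar\lambda)=\theta(\lambda)^*$ then force the kernel of Definition~\ref{npair} to be nonnegative. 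Hence $\theta(\lambda)=\ran\begin{bmatrix}q(\lambda)\\ p(\lambda)\end{bmatrix}$ for some $\{p,q\}\in\wt N(\cL)$, with $\Gamma_1\wh f=q(\lambda)s$ and $\Gamma_2\wh f=p(\lambda)s$ on $T(\lambda)$.

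The core step proves the implication ``$\Gamma T(\lambda)=\ran\begin{bmatrix}q\\ p\end{bmatrix}\Rightarrow$ \eqref{eq:4.8}''. Fixing $u\in\cL$ and setting $w=P_\cL(\wt A-\lambda)^{-1}|_\cL u=P_\cL(T(\lambda)-\lambda)^{-1}\begin{bmatrix}0\\ u\end{bmatrix}$, I denote by $f\in\cH$ the $\cH$-component of $(T(\lambda)-\lambda)^{-1}\begin{bmatrix}0\\ u\end{bmatrix}$; then $\wh F=\left\{\begin{bmatrix}f\\ w\end{bmatrix},\begin{bmatrix}\lambda f\\ u+\lambda w\end{bmatrix}\right\}\in T(\lambda)\subseteq A^+$. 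Substituting this vector into the explicit forms of $A^+$ and of $\Gamma_1,\Gamma_2$ from Proposition~\ref{pr:4.1} produces the constraint $(I_\cH-\lambda\wt B_1^+)f=\wt C_1^+u+(\lambda\wt C_1^+-\wt C_2^+)w$ together with $\Gamma_1\wh F=-w+\lambda\wt C_1 f$ and $\Gamma_2\wh F=u+\lambda w-\lambda\wt C_2 f$, while the Straus condition adds $\Gamma_1\wh F=q(\lambda)s$, $\Gamma_2\wh F=p(\lambda)s$ for some $s\in\cL$. Eliminating $f$ through the operator $I_\cH-\lambda\wt B_1^+$ (boundedly invertible for $\lambda\in\dC_\pm$ by Proposition~\ref{pr:4.1}) turns the two boundary equations into a finite linear system relating $u$, $w$ and $s$ whose coefficients, by the computation establishing Theorem~\ref{thm:4.3}, assemble into the blocks $w_{ij}(\lambda)$ of $W_{\Pi\cL}(\lambda)$ in \eqref{eq:4.9} and \eqref{eq:4.24}. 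Reading off $w=\psi(\lambda)u$ then reproduces the linear-fractional expression $\psi(\lambda)=(w_{11}q+w_{12}p)(w_{21}q+w_{22}p)^{-1}$ of \eqref{eq:4.8}, so $\{p,q\}$ is the pair attached to $\wt A$ by Krein's formula.

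For the reverse implication and the conclusion I would invoke Theorem~\ref{thm:4.2}, by which \eqref{eq:4.8} sets up a bijection between Nevanlinna pairs and $\cL$-resolvents of $A$. If $\{p,q\}$ is the Krein parameter of $\wt A$, let $\{p',q'\}$ be the pair produced in the first paragraph from $\Gamma T(\lambda)$; the core step applied to $\{p',q'\}$ shows that it too yields the $\cL$-resolvent of $\wt A$ through \eqref{eq:4.8}, so injectivity of the parametrization forces $\{p,q\}=\{p',q'\}$ as equivalence classes, i.e.\ $\Gamma T(\lambda)=\ran\begin{bmatrix}q\\ p\end{bmatrix}$; together with the core step this gives both directions. \emph{The main obstacle} is precisely the core step: performing the elimination so that the coefficient blocks are identified with the entries of $W_{\Pi\cL}$ in \eqref{eq:4.24} -- which requires careful bookkeeping of the prefactor $\begin{bmatrix}-I_\cL&0\\ \lambda&-I_\cL\end{bmatrix}$ and of $J$ -- and checking that $w_{21}(\lambda)q(\lambda)+w_{22}(\lambda)p(\lambda)$ is boundedly invertible so that the fraction in \eqref{eq:4.8} is well defined.
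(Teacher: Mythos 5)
You should know at the outset that the paper contains \emph{no proof} of Proposition~\ref{StrFam}: it is quoted from \cite{Bruk76} and \cite{DM95} and used as a black box. So your proposal is a reconstruction rather than something that can be matched against the text, and its architecture is the natural one: compress the resolvent of $\wt A$ through the Strauss family, pass to boundary values via Proposition~\ref{pr:4.1}, and get the converse from injectivity of the parametrization in Theorem~\ref{thm:4.2}. Your preliminary reductions are correct: $P_\cL(\wt A-\lambda)^{-1}|_\cL=P_\cL(T(\lambda)-\lambda)^{-1}|_\cL$, the vector $\wh F=\left\{\begin{bmatrix}f\\ w\end{bmatrix},\begin{bmatrix}\lambda f\\ u+\lambda w\end{bmatrix}\right\}$ does lie in $T(\lambda)\subset A^+$, and the constraint $(I_\cH-\lambda\wt B_1^+)f=\wt C_1^+u+(\lambda\wt C_1^+-\wt C_2^+)w$ together with $\Gamma_1\wh F=-w+\lambda\wt C_1 f$, $\Gamma_2\wh F=u+\lambda w-\lambda\wt C_2 f$ is exactly right.

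The core step, however, has a genuine gap, and it is a missing idea, not bookkeeping. If you eliminate $f$ as you propose, every coefficient of the resulting system contains $(I_\cH-\lambda\wt B_1^+)^{-1}=\bigl((I_\cH-\bar\lambda\wt B_1)^{-1}\bigr)^+$, i.e.\ adjoints of resolvent-matrix ingredients evaluated at $\bar\lambda$, whereas the blocks $w_{ij}(\lambda)$ of \eqref{eq:4.24} contain $(I_\cH-\lambda\wt B_1)^{-1}$; no rearrangement of the prefactor and $J$ identifies the two. The missing ingredient is the operator form of assumption (A1), namely the symmetry identity $\wt B_1-\wt B_1^+=\wt C_1^+\wt C_2-\wt C_2^+\wt C_1$, which converts the constraint into $(I_\cH-\lambda\wt B_1)f=\wt C_2^+\,\Gamma_1\wh F+\wt C_1^+\,\Gamma_2\wh F$; only then does the elimination close over $(I_\cH-\lambda\wt B_1)^{-1}$. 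Moreover, carrying the computation out one finds
\begin{equation*}
\begin{bmatrix}w\\ u\end{bmatrix}
=W_{\Pi\cL}(\lambda)\begin{bmatrix}\Gamma_1\wh F\\ -\Gamma_2\wh F\end{bmatrix},
\end{equation*}
so the hypothesis $\Gamma T(\lambda)=\ran\begin{bmatrix}q\\ p\end{bmatrix}$ delivers formula \eqref{eq:4.8} for the pair $\{-p,q\}$, not for $\{p,q\}$. This sign cannot be absorbed by more careful bookkeeping: with the paper's conventions \eqref{eq:4.3} and \eqref{eq:4.24}, the literal statement already fails in the simplest example $\cH=\{0\}$, $A=\{\{0,0\}\}$, $\cL=\dC$ embedded as the constants in $\wt\cH=L^2(d\sigma)$, and $\wt A$ multiplication by the independent variable: there $W_{\Pi\cL}(\lambda)=\begin{bmatrix}-1&0\\ \lambda&-1\end{bmatrix}$, the parameter of $\wt A$ in \eqref{eq:4.8} is the Nevanlinna pair $\{p,q\}=\{1+\lambda m,\,m\}$ with $m(\lambda)=\int(t-\lambda)^{-1}d\sigma(t)$, while $\Gamma T(\lambda)=\ran\begin{bmatrix}-m\\ 1+\lambda m\end{bmatrix}\neq\ran\begin{bmatrix}q\\ p\end{bmatrix}$. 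So a correct proof must either establish the statement in the form $\Gamma T(\lambda)=\ran\begin{bmatrix}q\\ -p\end{bmatrix}$ (equivalently, with $\Gamma_2$ replaced by $-\Gamma_2$), or change the conventions in \eqref{eq:4.3}/\eqref{eq:4.9}; your proof, which claims to reproduce the stated formula verbatim by ``reading off'' \eqref{eq:4.8}, cannot be completed as written.

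Two smaller points. Proposition~\ref{pr:4.1} does not say that $I_\cH-\lambda\wt B_1^+$ is boundedly invertible for all $\lambda\in\dC_\pm$; it gives the opposite inclusion $\wh\rho(A)\supset\rho(\wt B_1^{-1})$, and since $\wt B_1$ is bounded but in general not selfadjoint, invertibility is guaranteed only on $\rho(\wt B_1^{-1})$, a punctured neighbourhood of the origin, after which the identity must be extended to $\dC_\pm$ by analyticity (together with an argument that $w_{21}q+w_{22}p$ is invertible off a thin set). Finally, the converse direction uses injectivity of the correspondence in Theorem~\ref{thm:4.2}, which the paper states only as ``is parametrized by''; that is standard in Kre\u{\i}n's theory, but in a self-contained proof it should be stated and either cited precisely or proved.
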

We will need the following simple statement
\begin{lemma}\label{GT}
Let under the assumptions of Proposition~\ref{StrFam} $\wh A$ be a symmetric extension
of $A$. Then:
\begin{enumerate}
\item[(i)]
  $\wh A\subset \wt A$ if and only if $\wh A\subset T(\lambda)$ for some
  $\lambda\in\dC_+$;
\item[(ii)]
  $\wh A\subset \wt A$ if and only if $\Gamma\wh A\subset \ran\begin{bmatrix}q(\lambda)\\
p(\lambda)\end{bmatrix}$ for some
  $\lambda\in\dC_+$.
\end{enumerate}
\end{lemma}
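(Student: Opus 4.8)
The plan is to prove Lemma~\ref{GT} by exploiting the Strauss family representation from Proposition~\ref{StrFam} together with the resolvent formula~\eqref{GR}. The two statements (i) and (ii) are really linked: once I establish (i), part (ii) follows immediately from the characterization in Proposition~\ref{StrFam} that $\Gamma T(\lambda)=\ran\begin{bmatrix}q(\lambda)\\ p(\lambda)\end{bmatrix}$, since $\wh A\subset T(\lambda)$ is then equivalent to $\Gamma\wh A\subset\ran\begin{bmatrix}q(\lambda)\\ p(\lambda)\end{bmatrix}$. So the real content is part (i), and my strategy is to prove the two implications separately.

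For the forward implication of (i), suppose $\wh A\subset\wt A$. Take any $\{h,h'\}\in\wh A$; then $\{h,h'\}\in\wt A$ with $h,h'\in\cH$, so in particular $h'-\lambda h\in\cH$ for every $\lambda$. By the definition~\eqref{StrE} of the Strauss family, applying the projection $P=P_\cH$ (which acts as the identity on vectors already in $\cH$) gives $\{h,h'\}=\{Ph,Ph'\}\in T(\lambda)$ for all $\lambda\in\dC_+$. Thus $\wh A\subset T(\lambda)$, and this holds for every $\lambda$, in particular for some $\lambda\in\dC_+$.

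For the converse of (i), the key is that $\wh A$ is a symmetric linear relation inside $\cH\oplus\cL$, while $T(\lambda)$ is a maximal dissipative linear relation in that same space. The plan is to use the resolvent identity: if $\{h,h'\}\in\wh A\subset T(\lambda_0)$ for some fixed $\lambda_0\in\dC_+$, then since $\wh A\subset\wt A\cap(\cH\oplus\cH)$-type reasoning and~\eqref{GR} identify $(T(\lambda_0)-\lambda_0)^{-1}$ with $P_\cH(\wt A-\lambda_0)^{-1}|_\cH$, one recovers that $\{h,h'\}$ lies in the graph of $\wt A$ after undoing the compression. More precisely, from $\{h,h'\}\in T(\lambda_0)$ there exists $\{f,f'\}\in\wt A$ with $P_\cH f=h$, $P_\cH f'=h'$ and $f'-\lambda_0 f\in\cH$; writing $f'-\lambda_0 f = h'-\lambda_0 h$ and applying $(\wt A-\lambda_0)^{-1}$ together with the symmetry of $\wh A$, one forces $f=h$, $f'=h'$, whence $\{h,h'\}\in\wt A$. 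Since $\{h,h'\}\in\wh A$ was arbitrary, $\wh A\subset\wt A$.

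\textbf{The main obstacle} I anticipate is the converse direction of (i): showing that membership in a single $T(\lambda_0)$ is enough to force membership in $\wt A$, i.e. that one does not lose information in passing through the compressed Strauss family. The subtlety is that $T(\lambda_0)$ is obtained by compressing $\wt A$ to $\cH$, so a priori a vector in $T(\lambda_0)$ only guarantees a preimage in $\wt A$ whose components project correctly; the extra condition $f'-\lambda_0 f\in\cH$ in~\eqref{StrE} must be leveraged to show the exit-space component of any such preimage vanishes. This is where I would argue that if $\{f,f'\}\in\wt A$ with $P_\cH f=h\in\dom\wh A$ and $f'-\lambda_0 f=h'-\lambda_0 h\in\cH$, then the component of $f$ in $\cH_1=\wt\cH\ominus\cH$ lies in $\ker(\wt A-\lambda_0)$ compressed appropriately and hence is zero by the maximality/dissipativity of $T(\lambda_0)$ together with $\lambda_0\in\dC_+\subset\rho(\wt A)$; the invertibility of $\wt A-\lambda_0$ is exactly what rules out a nontrivial exit-space contribution. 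Once this is handled, both parts follow cleanly.
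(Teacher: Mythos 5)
Your forward implication in (i) and your reduction of (ii) to (i) are fine and agree with the paper (for (ii) one uses, as you do, that $\Gamma T(\lambda)=\ran\begin{bmatrix}q(\lambda)\\ p(\lambda)\end{bmatrix}$, together with $A=\ker\Gamma\subset T(\lambda)$ so that the inclusion of boundary values is equivalent to the inclusion of the relations). The genuine gap is in the converse of (i). Having produced $\{f,f'\}\in\wt A$ with $P_\cH f=h$, $P_\cH f'=h'$ and $f'-\lambda_0 f=h'-\lambda_0 h$, you claim $f=h$ follows from ``applying $(\wt A-\lambda_0)^{-1}$'' and from ``the maximality/dissipativity of $T(\lambda_0)$ together with the invertibility of $\wt A-\lambda_0$.'' That mechanism cannot work. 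Invertibility of $\wt A-\lambda_0$ only pins down $f=(\wt A-\lambda_0)^{-1}(h'-\lambda_0 h)$ uniquely; it does not place $f$ in $\cH$. Indeed, by~\eqref{GR} one has $P_\cH(\wt A-\lambda_0)^{-1}|_\cH=(T(\lambda_0)-\lambda_0)^{-1}$, and for a genuine exit-space extension the vector $(\wt A-\lambda_0)^{-1}h$, $h\in\cH$, generically has a nonzero component in $\cH_1=\wt\cH\ominus\cH$. More tellingly, \emph{every} element $\{h,h'\}$ of $T(\lambda_0)$ --- not only elements of $\wh A$ --- satisfies all the hypotheses you actually invoke (it has a preimage $\{f,f'\}\in\wt A$ with $f'-\lambda_0 f\in\cH$, and $\wt A-\lambda_0$ is invertible), yet $T(\lambda_0)\not\subset\wt A$ in general (it is maximal dissipative, not symmetric). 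So your argument, if valid, would prove a false statement; any correct proof must use the symmetry of $\wh A$ in a concrete computation, which your sketch names but never does.

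The missing step is the paper's short computation. Write $f=h+g_1$ with $g_1\in\cH_1$; the condition $f'-\lambda_0 f\in\cH$ then forces $f'=h'+\lambda_0 g_1$. Since $h,h'\perp\cH_1$,
\[
(f',f)_{\wt\cH}=(h',h)_{\cH}+\lambda_0\,\|g_1\|^2_{\cH_1}.
\]
Now $\wt A$ is selfadjoint and $\wh A$ is symmetric, so both $(f',f)_{\wt\cH}$ and $(h',h)_\cH$ are real; taking imaginary parts gives $(\IM\lambda_0)\,\|g_1\|^2_{\cH_1}=0$, hence $g_1=0$ because $\lambda_0\in\dC_+$. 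Therefore $\{h,h'\}=\{f,f'\}\in\wt A$. This is precisely where the symmetry of $\wh A$ (and of $\wt A$) enters, and it is the only ingredient that distinguishes elements of $\wh A$ from arbitrary elements of $T(\lambda_0)$.
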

\begin{proof}
1) The implication $\Rightarrow$ is immediate from~\eqref{StrE}. Conversely, assume that
$\wh A\subset T(\lambda)$. In view of~\eqref{StrE} for every $\{g,g'\}\in\wh A$ there
are $\{f,f'\}\in\wt A$ and $g_1\in\cH_1$ such that
\begin{equation}\label{ffgg}
    f=g+g_1,\quad f'=g'+\lambda g_1.
\end{equation}
Hence
\[
(f',f)_{\wt\cH}=(g',g)_\cH+\lambda(g_1,g_1)_{\cH_1}.
\]
Since $\wh A$ and $\wt A$ are symmetric this implies $g_1=0$. Therefore, $\{g,g'\}\in\wt
A$ and hence $\wh A\subset\wt A$.

2) The statement (ii) is implied by (i) since the inclusion $\wh A\subset T(\lambda)$ is
equivalent to $\Gamma\wh A\subset \Gamma T(\lambda)=\ran\begin{bmatrix}q(\lambda)\\
p(\lambda)\end{bmatrix}$.
\end{proof}
It follows from Lemma~\ref{GT} that all Nevanlinna pairs corresponding to generalized
resolvents of $\wh A$ have a common constant part $\Gamma\wh A$.
\begin{lemma}\label{lem:V}
Let $\wh A$ be the symmetric linear relation~\eqref{eq:4.33}, and let
$\{\cL,\Gamma_1,\Gamma_2\}$ be a boundary triplet for $A^+$. Then
\begin{enumerate}
\item[(i)]
  $\Gamma\wh
A$ is a neutral subspace in $(\cL^2,J_\cL)$ of dimension $\nu:=\dim C\ker K$;
\item[(ii)]
  There is a subspace $\cL_0\!\subset\!\cL\!$ and a $\!J_\cL\!$-unitary operator $\!V\in[\cL^2]$
  such that
\[
V(\{0\}\times \cL_0)\!=\!\Gamma\wh A.
\]
\end{enumerate}
\end{lemma}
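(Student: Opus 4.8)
The plan is to make the structure of $\Gamma\wh A$ explicit by computing $\Gamma$ on the "extra" generators of $\wh A$ over $A$, namely the vectors indexed by $u\in\ker K$, and then to read off both the neutrality and the dimension from the defining identity (A1). Recall from~\eqref{eq:4.33} and~\eqref{eq:4.35} that $\wh A$ is generated (modulo the part already in $A=\mbox{clos }A_0$) by vectors of the form
\[
\wh g_u=\left\{\left[\begin{array}{c}\widehat{B_1u}\\ C_1u\end{array}\right],\left[\begin{array}{c}\widehat{B_2u}\\ C_2u\end{array}\right]\right\},\quad u\in\ker K.
\]
Since $u\in\ker K$ we have $\widehat{B_ju}=0$ in $\cH$ (because $K(B_ju,B_ju)=0$ follows from (A1) applied with $h=g=u$, together with $C_ju$ absorbing the cross terms — this is the point that must be checked carefully). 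Thus each $\wh g_u$ reduces to a vector of $A^+$ with $g=g'=0$ and $v=C_1u$, $v'=C_2u$, and formula~\eqref{eq:4.3} gives $\Gamma_1\wh g_u=-C_1u$, $\Gamma_2\wh g_u=C_2u$. Hence $\Gamma\wh A=\{\,(C_2u,-C_1u):u\in\ker K\,\}$ as a subspace of $\cL^2=\cL\oplus\cL$.

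First I would verify neutrality. The skew-form $J_\cL$ associated to $\Gamma$ is, by Definition~\ref{BT}, $[\,(\Gamma_1\wh f,\Gamma_2\wh g)_\cL-(\Gamma_2\wh f,\Gamma_1\wh g)_\cL\,]$. Evaluating this on two generators $\wh g_u,\wh g_w$ and using the expressions above gives exactly
\[
(C_1u,C_2w)_\cL-(C_2u,C_1w)_\cL,
\]
which by (A1) equals $K(B_2u,B_1w)-K(B_1u,B_2w)=0$ since $u,w\in\ker K$ force all four $K$-terms to vanish (each $B_ju$ lies in $\ker K$). Therefore $\Gamma\wh A$ is $J_\cL$-neutral, proving half of (i).

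Next I would compute the dimension. The map $\ker K\to\cL^2$, $u\mapsto(C_2u,-C_1u)$, has image $\Gamma\wh A$, and its dimension is governed by $\dim C\ker K$ where $C=\begin{bmatrix}C_1\\ C_2\end{bmatrix}$; I would argue that the kernel of $u\mapsto(C_1u,C_2u)$ is contained in the part of $\ker K$ already absorbed into $A$, so that $\dim\Gamma\wh A=\nu:=\dim C\ker K$, completing (i). For part (ii), a neutral subspace of dimension $\nu$ in the $J_\cL$-space $\cL^2$ can always be written as $V(\{0\}\times\cL_0)$ for a $J_\cL$-unitary $V$ and a subspace $\cL_0\subset\cL$ with $\dim\cL_0=\nu$: one takes $\cL_0=C\ker K$ (or an isometric copy), observes that $\{0\}\times\cL_0$ is a maximal-among-same-dimension neutral subspace of the standard form, and invokes the Witt-type extension theorem for the indefinite inner product $(J_\cL\,\cdot\,,\cdot)$, which transports any neutral subspace to any other of equal dimension via a $J_\cL$-unitary.

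The main obstacle I anticipate is the bookkeeping in the dimension count: one must be precise that passing to the quotient $\wh\cX=\cX/\ker K$ kills the $\widehat{B_ju}$ components but \emph{not} the $C_ju$ components, so that the "new" directions in $\Gamma\wh A$ are parametrized faithfully by $C\ker K$ rather than by $\ker K$ itself. Getting $\dim\Gamma\wh A=\nu$ rather than $\dim\ker K$ right is the crux, and it rests on the interplay between (A1), (A2) and the definition $\nu=\dim C\ker K$; the $J_\cL$-unitary construction in (ii) is then a standard consequence of Witt's theorem once the neutral subspace and its dimension are pinned down.
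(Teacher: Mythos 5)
Your overall route is the same as the paper's: evaluate $\Gamma$ on the vectors of $\wh A$ parametrized by $u\in\ker K$, obtain $\Gamma\wh A=\{(-C_1u,C_2u):u\in\ker K\}$, get neutrality from (A1), read the dimension off the definition of $\nu$, and invoke a Witt-type extension to produce $V$. However, the step you yourself flag as ``the point that must be checked carefully'' is wrong as stated: it is \emph{not} true that $u\in\ker K$ forces $\widehat{B_1u}=0$, i.e.\ that $B_1u\in\ker K$. Assumption (A1) with $h=g=u$ gives only $K(u,B_1u)-K(B_1u,u)=(C_1u,C_2u)_\cL-(C_2u,C_1u)_\cL$; since $u\in\ker K$, the left-hand side already vanishes by Cauchy--Schwarz (a nonnegative form vanishes whenever one argument lies in its kernel), so all you learn is that $(C_1u,C_2u)_\cL$ is real --- there is no control whatsoever on $K(B_1u,B_1u)$. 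Nor does (A2) help: it makes the complement $\cX_0$, not $\ker K$, invariant under $B_1$. Indeed, in the singular truncated moment problem of Section 5 one has $B_1=T$ and $T\ker S_n\not\subseteq\ker S_n$ in general, which is exactly why the $T$-invariant complement $\ran X$ of Lemma~\ref{Pseudo} has to be constructed.

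Fortunately the false claim is never needed, and the repair is precisely what the paper does. First, the boundary maps \eqref{eq:4.3} involve only $v$, $v'$ and the \emph{second} entry $g'$, never $g$; by (A3) one has $B_2=I_\cX$, so $g'=\widehat{B_2u}=\widehat u=0$, and hence $\Gamma_1\wh g_u=-C_1u$, $\Gamma_2\wh g_u=C_2u$ regardless of whether $\widehat{B_1u}$ vanishes. (Incidentally, your display $\Gamma\wh A=\{(C_2u,-C_1u)\}$ transposes the two components relative to your own computation; this is a harmless slip.) Second, in the neutrality computation the $K$-terms vanish not because ``each $B_ju$ lies in $\ker K$'' but again by Cauchy--Schwarz: $K(B_2u,B_1w)=K(u,B_1w)=0$ since $u\in\ker K$, and $K(B_1u,B_2w)=K(B_1u,w)=0$ since $w\in\ker K$. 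With these two corrections your argument coincides with the paper's proof: the dimension count is immediate because $u\mapsto(-C_1u,C_2u)$ and $u\mapsto(C_1u,C_2u)$ have images of equal dimension, namely $\dim C\ker K=\nu$, and part (ii) is, as you say, the standard fact that any two neutral subspaces of equal finite dimension in $(\cL^2,J_\cL)$ are carried onto one another by a $J_\cL$-unitary operator.
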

\begin{proof}
1) It follows from~\eqref{eq:4.35} and~\eqref{eq:4.3} that
\[
\Gamma\wh A=\left\{\left[\begin{array}{c}
                            \Gamma_1\wh g \\
                            \Gamma_2\wh g
                          \end{array}\right]
:\,\wh g\in\wh A\right\}= \left\{\left[\begin{array}{r}
                            C_1u \\
                            -C_2u
                          \end{array}\right]
:\,u\in\ker K\right\}.
\]
Clearly, the subspace $\Gamma\wh A$ is finite-dimensional and
\[
\dim \Gamma\wh A= \dim C\ker K.
\]
The subspace $\Gamma\wh A$ is neutral since for every $u\in\ker K$ one has
\[
\begin{split}
(\Gamma_1\wh g,\Gamma_2\wh g)-(\Gamma_1\wh g,\Gamma_2\wh g)&=(C_1u,C_2u)_\cL-(C_2u,C_1u)_\cL\\
&=K(u,B_1u)-K(B_1u,u)=0.
\end{split}
\]
2) Let us decompose $\cL$ into the orthogonal sum
\[
\cL=\cL_0\oplus\cL_1,
\]
where $\cL_0$ is a subspace of $\cL$ such that $\dim \cL_0=\nu$. The subspace
$\{0\}\times \cL_0$ is $J_\cL$-neutral and hence there exists a $\!J_\cL\!$-unitary
operator $\!V\in[\cL^2]$
  such that
$V(\{0\}\times \cL_0)\!=\!\Gamma\wh A$.
\end{proof}
Let $V$ be the $J_\cL$-unitary operator, constructed in Lemma~\ref{lem:V}. Then
\begin{equation}\label{eq:What}
\wh W(\lambda)=(\wh w_{ij}(\lambda))_{i,j=1}^2:=W_{\Pi\cL}(\lambda)V.
\end{equation}
is also the $\cL$-resolvent matrix of $A_0$ with the advantage that the $\cL$-resolvents
of $\wh A$ can be easily described in its terms.
\begin{proposition}\label{prop:4.21}
The set of all $\cL$-resolvents of $\wh A$ is parametrized by the formula
\begin{equation}\label{eq:4.81}
P_{\cL}(\wt{A}-\lambda)^{-1}|_\cL =(\wh w_{11}(\lambda) q(\lambda)+\wh w_{12}(\lambda)
p(\lambda)) (\wh
 w_{21}(\lambda) q(\lambda)+\wh w_{22}(\lambda) p(\lambda))^{-1}
\end{equation}
where $\{ p, q\}$ ranges over the set $\wt{\cN}(\cL)$ of Nevanlinna pairs of the form
\begin{equation}\label{pq}
     p(\lambda)=\left[%
\begin{array}{cc}
  I_{\cL_0} & 0 \\
  0 & p_1(\lambda) \\
\end{array}%
\right],\quad
 q(\lambda)=\left[%
\begin{array}{cc}
  0_{\cL_0} & 0 \\
  0 & q_1(\lambda) \\
\end{array}%
\right],\quad \{p_1,q_1\}\in\wt N(\cL_1).
\end{equation}
\end{proposition}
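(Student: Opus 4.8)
The plan is to carve the description for $\wh A$ out of the description for $A_0$ furnished by Kre\u{\i}n's theorem. By~\eqref{eq:What} the matrix $\wh W(\lambda)=W_{\Pi\cL}(\lambda)V$ is again an $\cL$-resolvent matrix of $A_0$, so Theorem~\ref{thm:4.2} says that~\eqref{eq:4.81}, as $\{p,q\}$ runs through all of $\wt{\cN}(\cL)$, produces exactly the $\cL$-resolvents of $A_0$ (equivalently of $A=\mathrm{clos}\,A_0$, which has the same selfadjoint extensions). Since $A_0\subset\wh A$, such an extension $\wt A$ yields an $\cL$-resolvent of $\wh A$ precisely when $\wt A\supset\wh A$, and I would translate this into a condition on the parameter. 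If $\{p,q\}$ is the $\wh W$-parameter, the corresponding $W_{\Pi\cL}$-parameter $\{\wt p,\wt q\}$ satisfies $\begin{bmatrix}\wt q\\ \wt p\end{bmatrix}=V\begin{bmatrix}q\\ p\end{bmatrix}$, so by Proposition~\ref{StrFam} one has $\Gamma T(\lambda)=\ran\begin{bmatrix}\wt q\\ \wt p\end{bmatrix}=V\,\ran\begin{bmatrix}q\\ p\end{bmatrix}$. By Lemma~\ref{GT}, $\wt A\supset\wh A$ is equivalent to $\wh A\subset T(\lambda)$ for some $\lambda\in\dC_+$ --- hence, as the Strauss condition $f'-\lambda f\in\cH\oplus\cL$ is automatic on $\wh A\subset\cH\oplus\cL$, for \emph{all} $\lambda\in\dC_+$ --- and thus to $\Gamma\wh A\subset\ran\begin{bmatrix}\wt q\\ \wt p\end{bmatrix}$; inserting $\Gamma\wh A=V(\{0\}\times\cL_0)$ from Lemma~\ref{lem:V} and cancelling $V$ turns it into
\[
\{0\}\times\cL_0\subset\ran\begin{bmatrix}q(\lambda)\\ p(\lambda)\end{bmatrix},\qquad \lambda\in\cmr .
\]
The whole proposition thus reduces to the claim that a Nevanlinna pair obeys this inclusion if and only if it is equivalent to a pair of the form~\eqref{pq}.

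The sufficiency is immediate: for $\{p,q\}$ as in~\eqref{pq} and $u=(u_0,0)\in\cL_0\oplus\cL_1$ one gets $q(\lambda)u=0$, $p(\lambda)u=(u_0,0)$, so the inclusion holds; together with the reduction above this shows every such pair yields an $\cL$-resolvent of $\wh A$. I would also record that the right-hand side of~\eqref{eq:4.81} is invariant under $\{p,q\}\mapsto\{p\chi,q\chi\}$, since the holomorphic invertible factor $\chi$ cancels between the numerator and the denominator; hence it suffices, for each admissible parameter, to produce \emph{one} representative of the required form.

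For the necessity, suppose the inclusion holds for all $\lambda\in\cmr$. Writing $\cM(\lambda)=\{u:q(\lambda)u=0,\ p(\lambda)u\in\cL_0\}$, property (iii) of Definition~\ref{npair} shows $p(\lambda)$ is injective on $\cM(\lambda)$ (if $u\in\cM(\lambda)$ and $p(\lambda)u=0$ then $(p-\lambda q)(\lambda)u=0$, so $u=0$), while the inclusion makes it onto $\cL_0$; thus $\dim\cM(\lambda)=\nu$. Property (ii) gives, for $u\in\cM(\lambda)$ with $w_0:=p(\lambda)u$, the identity $q(\bar\lambda)^*w_0=q(\bar\lambda)^*p(\lambda)u=p(\bar\lambda)^*q(\lambda)u=0$; letting $w_0$ sweep $\cL_0$ and $\lambda$ sweep $\cmr$ yields $P_{\cL_0}q(\lambda)\equiv0$. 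Next I would pick the holomorphic sections $u^{(j)}(\lambda)=(p(\lambda)|_{\cM(\lambda)})^{-1}e_j$ of $\cM(\lambda)$, complete them to a holomorphic invertible frame $\chi(\lambda)$ of $\cL$, and pass to the equivalent pair $\{p\chi,q\chi\}$. By construction the first $\nu$ columns of $q\chi$ vanish and those of $p\chi$ equal $\begin{bmatrix}I_{\cL_0}\\ 0\end{bmatrix}$, and since $P_{\cL_0}q\equiv0$ the top row of $q\chi$ is zero as well, so $p\chi=\begin{bmatrix}I_{\cL_0}&p_{01}\\ 0&p_1\end{bmatrix}$, $q\chi=\begin{bmatrix}0&0\\ 0&q_1\end{bmatrix}$. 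A final reparametrization by $\begin{bmatrix}I_{\cL_0}&-p_{01}\\ 0&I_{\cL_1}\end{bmatrix}$ clears the block $p_{01}$ and brings the pair exactly to the form~\eqref{pq}; block computations of the kernel, of property (ii), and of $(p-\lambda q)\chi=\mathrm{diag}(I_{\cL_0},p_1-\lambda q_1)$ then confirm $\{p_1,q_1\}\in\wt N(\cL_1)$.

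The step I expect to be the real obstacle is the construction of the holomorphic frame $\chi$: one must check that $\cM(\lambda)$ is a holomorphic subbundle of constant rank $\nu$ and that it admits a global holomorphic complement, so that $\chi$ is holomorphic \emph{and} invertible throughout $\cmr$ (as demanded by the definition of equivalence). Here the geometry helps, since each of $\dC_+$ and $\dC_-$ is simply connected and the relevant holomorphic bundle therefore splits; once $\chi$ is available, the remainder is the routine block algebra above, plus the bookkeeping that equivalent parameters give, by the invariance noted in the second paragraph, the same $\cL$-resolvent of $\wh A$.
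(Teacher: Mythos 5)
Your proposal is, in its core reduction, exactly the paper's own proof: the paper likewise passes to $\wh W=W_{\Pi\cL}V$, invokes Lemma~\ref{GT} (through Proposition~\ref{StrFam}) together with $\Gamma\wh A=V(\{0\}\times\cL_0)$ from Lemma~\ref{lem:V}, and cancels $V$ to reduce the whole statement to the inclusion~\eqref{eq:4.51}. The only divergence is at the last step, where the paper simply asserts that~\eqref{eq:4.51} ``means'' that $\{p,q\}$ admits the representation~\eqref{pq}, whereas you actually prove this equivalence; your block computations (injectivity of $p(\lambda)$ on $\cM(\lambda)$ from property (iii), $P_{\cL_0}q\equiv 0$ from property (ii), the triangular reparametrization clearing $p_{01}$) are all sound, and the holomorphic-frame construction you flag is indeed the one substantive point the paper glosses over entirely. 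That point is unproblematic when $\dim\cL<\infty$ (a constant-rank kernel of a holomorphic matrix family admits local holomorphic frames by a Cramer-rule argument, and over each of the domains $\dC_\pm$ such a bundle is holomorphically trivial and admits a holomorphic complement); if one wants the statement for infinite-dimensional $\cL_1$, a cleaner route avoiding frames is to pass to the Nevanlinna family $\tau(\lambda)=\ran\begin{bmatrix}q(\lambda)\\ p(\lambda)\end{bmatrix}$, note that~\eqref{eq:4.51} says $\cL_0\subset\mul\tau(\lambda)$ for all $\lambda$, and decompose $\tau(\lambda)=(\{0\}\times\cL_0)\hoplus\tau_1(\lambda)$ with $\tau_1$ a Nevanlinna family in $\cL_1$, which is then represented by a pair $\{p_1,q_1\}\in\wt N(\cL_1)$.
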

\begin{proof}
Let $\{\wt p,\wt q\}$ be a Nevanlinna pair defined by
\[
\begin{bmatrix}
\wt q(\lambda)\\
\wt p(\lambda)
\end{bmatrix}=V\begin{bmatrix}
 q(\lambda)\\
 p(\lambda)
\end{bmatrix},\quad \{ p, q\}\in\wt{\cN}(\cL).
\]
It follows from Lemma~\ref{GT} that the formula
\[
P_{\cL}(\wt{A}-\lambda)^{-1}|_\cL =( w_{11}(\lambda)\wt q(\lambda)+ w_{12}(\lambda) \wt
p(\lambda)) ( w_{21}(\lambda)\wt q(\lambda)+w_{22}(\lambda)\wt p(\lambda))^{-1}
\]
establishes a 1-1 correspondence between the set of all $\cL$-resolvents of $\wh A$ and
the set of Nevanlinna families $\{\wt p,\wt q\}$ such that
\begin{equation}\label{eq:4.50}
    \Gamma\wh A\subset \ran\begin{bmatrix}\wt q(\lambda)\\
\wt p(\lambda)\end{bmatrix},\quad\lambda\in\dC_+.
\end{equation}
Since
\[
\Gamma\wh A=V\begin{bmatrix}0\\\cL_0\end{bmatrix},\quad\mbox{and }
\ran\begin{bmatrix}\wt q(\lambda)\\
\wt p(\lambda)\end{bmatrix}=V\ran\begin{bmatrix} q(\lambda)\\
 p(\lambda)\end{bmatrix},
\]
the inclusion~\eqref{eq:4.50} is equivalent to the inclusion
\begin{equation}\label{eq:4.51}
    \begin{bmatrix}0\\ \cL_0\end{bmatrix}\subset \ran\begin{bmatrix} q(\lambda)\\
 p(\lambda)\end{bmatrix},\quad\lambda\in\dC_+,
\end{equation}
which, in turn, means that the pair $\{p,q\}$ admits the representation~\eqref{pq}.
\end{proof}

\subsection{Description of AIP solutions.}
To describe solutions of the AIP it remains to combine Theorem~\ref{thm:3.2} and
Proposition~\ref{prop:4.21}. Let the ovf $\Theta(\lambda)$ be defined by
\begin{equation}\label{eq:Theta}
\Theta(\lambda)=\left[%
\begin{array}{cc}
  I & 0 \\
  {\lambda} & I \\
\end{array}%
\right]\wh W(\lambda)=\left(I_{\cL\oplus\cL}-\lambda\left[%
\begin{array}{c}
  \wt C_1 \\
  \wt C_2 \\
\end{array}%
\right](I_\cH-\lambda \wt B_1)^{-1}\begin{bmatrix}\wt C_2^+ & -\wt
C_1^+\end{bmatrix}\right)V
\end{equation}
\begin{theorem}\label{Lres2}
Let the AIP data set satisfy $(A1)-(A3)$.
 Then the formula
\begin{equation}\label{AIPdescr}
    \left[%
\begin{array}{c}
  \psi(\lambda) \\
  \phi(\lambda) \\
\end{array}%
\right]=\Theta(\lambda)
\left[%
\begin{array}{c}
  q(\lambda) \\
  p(\lambda) \\
\end{array}%
\right](\wh w_{21}(\lambda)q(\lambda)+\wh w_{22}(\lambda)p(\lambda))^{-1}
\end{equation}
 establishes the 1-1 correspondence between the set of all normalized solutions
$\{\phi,\psi\}$ of $AIP(B_1,B_2,C_1,C_2,K)$ and the set of all equivalence classes of
Nevanlinna pairs $\{p,q\}\in\wt N(\cL)$ of the form~\eqref{pq}. The corresponding
mapping $F:\cX\to\cH(\phi,\psi)$ in (C1), (C2) is uniquely defined by the solution
$\{\phi,\psi\}$:
\begin{equation}\label{FG}
    (Fh)(\lambda)=\begin{bmatrix} \varphi(\lambda) &
    -\psi(\lambda)\end{bmatrix}\wt G(\lambda)\cP_{\cX_0}h\quad (\lambda\in\cO_\pm),
\end{equation}
where
\[
\wt G(\lambda)=\begin{bmatrix} \wt C_1\\ \wt C_2 \end{bmatrix}(I_\cH-\lambda \wt
B_1|_{\cX_0})^{-1}\quad (\lambda\in\cO_\pm),
\]
and $\cP_{\cX_0}$ is the skew projection onto $\cX_0$ in the
decomposition~\eqref{eq:$.30}.
\end{theorem}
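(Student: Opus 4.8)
The plan is to obtain Theorem~\ref{Lres2} by composing the two parametrizations already established. By Theorem~\ref{thm:3.2} every normalized solution $\{\varphi,\psi\}$ of the AIP is of the form~\eqref{eq:3.6} for some selfadjoint exit extension $\wt A$ of $\wh A$, and conversely; since $\psi(\lambda)=P_\cL(\wt A-\lambda)^{-1}|_\cL$ and $\varphi(\lambda)=I_\cL+\lambda\psi(\lambda)$, the whole solution is encoded in the $\cL$-resolvent of $\wh A$, so normalized solutions are in bijection with the $\cL$-resolvents of $\wh A$. On the other side, Proposition~\ref{prop:4.21} parametrizes these $\cL$-resolvents bijectively, up to equivalence, by the Nevanlinna pairs $\{p,q\}$ of the special form~\eqref{pq} via~\eqref{eq:4.81}. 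I would first record that composing these two bijections already yields the claimed 1-1 correspondence, noting that the right-hand side of~\eqref{eq:4.81} is unchanged under right multiplication of $\{p,q\}$ by an invertible holomorphic factor, so the correspondence descends to equivalence classes.

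Next I would derive the explicit formula~\eqref{AIPdescr} by a short substitution. Abbreviating the denominator in~\eqref{eq:4.81} by $\Delta(\lambda)=\wh w_{21}(\lambda)q(\lambda)+\wh w_{22}(\lambda)p(\lambda)$, that formula says precisely that $\wh W(\lambda)\left[\begin{smallmatrix}q\\p\end{smallmatrix}\right]=\left[\begin{smallmatrix}\psi\Delta\\\Delta\end{smallmatrix}\right]$. Multiplying on the left by $\left[\begin{smallmatrix} I & 0\\ \lambda & I\end{smallmatrix}\right]$, which turns $\wh W$ into $\Theta$ by~\eqref{eq:Theta}, and on the right by $\Delta(\lambda)^{-1}$, and then using $\varphi=I_\cL+\lambda\psi$, produces the column $\left[\begin{smallmatrix}\psi\\\varphi\end{smallmatrix}\right]$ exactly. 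This step is purely formal once the definition of $\Theta$ is unfolded.

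The part needing genuine care is the uniqueness and the formula~\eqref{FG} for $F$. Here I would first verify that assumption (U) is automatic under (A1)--(A3): since $B_2=I_\cX$ and $B_1$ is block upper-triangular with respect to $\cX=\cX_0\dotplus\ker K$ (because $B_1\cX_0\subseteq\cX_0$), the map $I_\cX-\lambda B_1$ is an isomorphism of $\cX$ whenever both diagonal blocks are, which holds for small $|\lambda|$ off the finitely many spectral values of the finite-dimensional $(2,2)$-block on $\ker K$; this furnishes nonempty $\cO_\pm\subset\dC_\pm$. With (U) in force, Proposition~\ref{Funique} gives uniqueness of $F$ together with the representation~\eqref{FG0}. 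To pass from~\eqref{FG0} to~\eqref{FG}, I would use that $(Fh)(\lambda)=(F\cP_{\cX_0}h)(\lambda)$, since $h-\cP_{\cX_0}h\in\ker K$ and $F$ annihilates $\ker K$ by~\eqref{eq:3.15}, and then argue that on the invariant subspace $\cX_0$ the resolvent $(I_\cX-\lambda B_1)^{-1}$ agrees with $(I_\cH-\lambda\wt B_1)^{-1}$ (compare Neumann series on $\cX_0$, then continue analytically) and that $C_j$ agrees with $\wt C_j$ there, so that $G(\lambda)$ restricted to $\cX_0$ coincides with $\wt G(\lambda)$. I expect this reconciliation of the two resolvents, together with the bookkeeping of the skew projection $\cP_{\cX_0}$ against the identification of $\cX_0$ with $\wh\cX$, to be the main, though still routine, obstacle.
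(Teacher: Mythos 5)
Your first two paragraphs are sound and coincide with the paper's own treatment: formula \eqref{AIPdescr} is exactly the composition of Theorem~\ref{thm:3.2} with Proposition~\ref{prop:4.21}, unfolded through the definition \eqref{eq:Theta} of $\Theta$, and your substitution argument for the display is the intended one. The gap is in your third paragraph, where you claim that assumption (U) is automatic under (A1)--(A3). It is not, and your argument breaks precisely at the point you flag as routine: the $(1,1)$ block $I_{\cX_0}-\lambda B_1|_{\cX_0}$ need not be an isomorphism of $\cX_0$ for \emph{any} $\lambda\neq 0$, because $\cX_0$ is an incomplete dense subspace of $\cH$. Boundedness of $\wt B_1$ gives invertibility of $I_\cH-\lambda\wt B_1$ only on the completion $\cH$; the Neumann series $\sum_k\lambda^k\wt B_1^k g$ has all partial sums in $\cX_0$, but its limit generally lies outside $\cX_0$, so $(B_2-\lambda B_1)\cX_0$ can be a proper (though dense) subspace of $\cX_0$ and no inverse exists on $\cX$. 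A model of this failure: $\cH=\ell^2$, $\cX_0$ the finitely supported sequences, $B_1$ the forward shift; then $e_1\notin(I-\lambda B_1)\cX_0$ for every $\lambda\neq0$, since the unique preimage in $\cH$ is $\sum_{j\ge0}\lambda^j e_{j+1}$. (In the Hamburger moment example (U) does hold, but only because there $B_1$ is locally nilpotent, so the Neumann series terminates; this is special, not generic.) Consequently you are not entitled to invoke Proposition~\ref{Funique}, and the uniqueness of $F$ together with \eqref{FG} is left unproved.

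The paper's proof is structured exactly so as to avoid (U). It applies (C1) only to vectors $h_\mu$ with $(B_2-\mu B_1)h_\mu=g$, where $g$ ranges over the \emph{range} $(B_2-\mu B_1)\cX_0$, so no global inverse on $\cX$ is needed; setting $\lambda=\mu$ gives $(Fg)(\mu)=\begin{bmatrix}\varphi(\mu)&-\psi(\mu)\end{bmatrix}\wt G(\mu)g$ on that range. Then it uses that $(B_2-\mu B_1)\cX_0=(I_\cH-\mu\wt B_1)\cX_0$ is dense in $\cH$ (bounded invertibility of $I_\cH-\mu\wt B_1$ plus density of $\cX_0$), that by (C2) the map $F$ factors through a contraction of $\cH$ into $\cH(\varphi,\psi)$ while point evaluation and $\wt G(\mu)$ are bounded, and passes to the limit to get the formula on all of $\cX_0$; finally $F\equiv0$ on $\ker K$ inserts the skew projection $\cP_{\cX_0}$. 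If you replace your appeal to Proposition~\ref{Funique} by this range-plus-density argument (the same computation as in that proposition, localized to where preimages actually exist), your proof becomes correct.
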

\begin{proof}
Indeed the description~\eqref{AIPdescr} is implied by~\eqref{eq:3.6},~\eqref{eq:4.81}
and~\eqref{eq:Theta}.

Next, let $g\in (B_2-\mu B_1)\cX_0$ $(\mu\in\cO_\pm)$.  Applying (C1) to the vector
\[
h=h_\mu:=(B_2-\mu B_1)^{-1}g,
\]
one obtains
\begin{equation}\label{eq:3.9}
\begin{split}
(F g)(\lambda) &=(F{B_2} h_{\mu})(\lambda)-\mu(F{B_1} h_{\mu})(\lambda)\\
&=\begin{bmatrix} \varphi(\lambda) &
    -\psi(\lambda)\end{bmatrix}G(\mu)g+(\lambda-\mu)(F{B_1}h_{\mu})(\lambda).\\
\end{split}
\end{equation}
Setting in~\eqref{eq:3.9} $\lambda=\mu$, one obtains
\begin{equation}\label{eq:3.10}
(Fg)(\mu))=\begin{bmatrix} \varphi(\mu) &
    -\psi(\mu)\end{bmatrix}\wt G(\mu)g\quad
(\mu\in\cO_{\pm}, g\in\cX).
\end{equation}
The equality ~\eqref{eq:3.10} holds for every $g\in(B_2-\mu B_1)\cX_0$
$(\mu\in\cO_\pm)$.

Let $g\in\cX_0$ and let $g_n\in(B_2-\mu B_1)\cX_0$ and $g_n\to g$. Then taking the limit
in
\[
(Fg_n)(\mu)=\begin{bmatrix} \varphi(\mu) &
    -\psi(\mu)\end{bmatrix}\wt G(\mu)g_n
\]
one obtains~\eqref{FG} for $g\in\cX_0$. To prove~\eqref{FG}  it remains to notice that
$Fg\equiv 0$ for all $g\in\ker K$.
\end{proof}
\begin{theorem}\label{Lres1}
Let the AIP data set satisfy $(A1)$, $(A2)$, $(A3')$ and let
\[
\begin{split}
\Theta^\mu(\lambda)&=\left[%
\begin{array}{cc}
  I & 0 \\
  {\lambda} & I \\
\end{array}%
\right]\wh W^\mu(\lambda)\\
&=\left(I+i(\lambda-\mu)\left[%
\begin{array}{c}
  \wt C_1 \\
  \wt C_2 \\
\end{array}%
\right](\wt B_2-\lambda \wt B_1)^{-1}(\wt B_2^+-\mu \wt B_1^+)^{-1}\left[%
\begin{array}{c}
  \wt C_1 \\
  \wt C_2 \\
\end{array}%
\right]^+J \right)V.
\end{split}
\]
 Then the formula
\begin{equation}\label{AIPdescr1}
    \left[%
\begin{array}{c}
  \psi(\lambda) \\
  \phi(\lambda) \\
\end{array}%
\right]=\Theta^\mu(\lambda)
\left[%
\begin{array}{c}
  q(\lambda) \\
  p(\lambda) \\
\end{array}%
\right](\wh w_{21}^\mu(\lambda)q(\lambda)+\wh w_{22}^\mu(\lambda)p(\lambda))^{-1}
\end{equation}
establishes the 1-1 correspondence between the set of all normalized solutions
$\{\phi,\psi\}$ of $AIP(B_1,B_2,C_1,C_2,K)$ and the set of all equivalence classes of
Nevanlinna pairs $\{p,q\}\in\wt N(\cL)$ of the form~\eqref{pq}. The corresponding
mapping $F:\cX\to\cH(\phi,\psi)$ in (C1), (C2) is uniquely defined by the
formula~\eqref{FG}.
\end{theorem}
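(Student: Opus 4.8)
The plan is to run the argument of Theorem~\ref{Lres2} essentially verbatim, replacing only the source of the $\cL$-resolvent matrix: under $(A3')$ it comes from Corollary~\ref{cor:4.4} rather than Theorem~\ref{thm:4.3}. Assumption $(A3')$ is tailored for this reduction. Substituting $B_2-\mu B_1\mapsto I_\cX$ turns the data into $(B_1(B_2-\mu B_1)^{-1}, I_\cX, C_1(B_2-\mu B_1)^{-1}, (C_2-\mu C_1)(B_2-\mu B_1)^{-1}, K)$, which satisfies $(A1)$--$(A3)$ and whose closed symmetric relation is precisely $A-\mu$, where $A=\clos A_0$. Thus the whole apparatus of Section 4 applies to $A-\mu$, and Corollary~\ref{cor:4.4} carries the outcome back to $A$ through the real shift $\lambda\mapsto\lambda-\mu$, furnishing $W^\mu(\lambda)=W(\lambda-\mu)$ as an $\cL$-resolvent matrix of $A$.

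First I would invoke Theorem~\ref{thm:3.2}, which uses only $(A1)$ and therefore holds unchanged: it identifies the normalized AIP solutions $\{\varphi,\psi\}$ with the selfadjoint exit-space extensions $\wt A\supseteq\wh A$ through \eqref{eq:3.6}, so that everything reduces to describing the $\cL$-resolvents of $\wh A$. I would then fix the boundary triplet $\Pi$ for $A^+$ underlying $W^\mu$ (obtained from the triplet for the $(A3)$-data by the real $\mu$-shift, legitimate because $A^+=(A-\mu)^++\mu$ and the boundary form is invariant under a real shift). The constant part $\Gamma\wh A$ is a neutral subspace of $(\cL^2,J_\cL)$ of dimension $\nu=\dim C\ker K$, depending only on the action of $C_1,C_2$ on $\ker K$; hence Lemma~\ref{lem:V} produces a $J_\cL$-unitary $V$ with $V(\{0\}\times\cL_0)=\Gamma\wh A$. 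Setting $\wh W^\mu=W^\mu V$, Proposition~\ref{prop:4.21} now describes all $\cL$-resolvents of $\wh A$ by \eqref{eq:4.81} as $\{p,q\}$ ranges over the Nevanlinna pairs of the form \eqref{pq}.

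Substituting this description into \eqref{eq:3.6} reproduces the triangular factor $\left[\begin{smallmatrix} I & 0\\ \lambda & I\end{smallmatrix}\right]$, so that $\Theta^\mu(\lambda)=\left[\begin{smallmatrix} I & 0\\ \lambda & I\end{smallmatrix}\right]\wh W^\mu(\lambda)$, whose explicit form is the one displayed in the statement by \eqref{eq:W.26} of Corollary~\ref{cor:4.4}; this yields \eqref{AIPdescr1}, with the one-to-one property inherited from Proposition~\ref{prop:4.21} and ultimately Theorem~\ref{thm:4.2}. For the mapping $F$ I would argue exactly as in Theorem~\ref{Lres2}: $(A3')$ gives $(B_2-\mu B_1)\cX=\cX$ and $0\in\rho(\wt B_2-\mu\wt B_1)$, so $B_2-\lambda B_1$ is an isomorphism on nonempty $\cO_\pm\subset\dC_\pm$; thus $(U)$ holds, Proposition~\ref{Funique} pins $F$ down uniquely, and applying $(C1)$ to $h_\mu=(B_2-\mu B_1)^{-1}g$, letting $\lambda\to\mu$, and extending by continuity over $g\in(B_2-\mu B_1)\cX_0$ (with $Fg\equiv0$ on $\ker K$) delivers \eqref{FG}.

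The hard part will be the step quietly delegated to Corollary~\ref{cor:4.4}: verifying that the real shift $\lambda\mapsto\lambda-\mu$ sends an $\cL$-resolvent matrix of $A-\mu$ to a genuine $\cL$-resolvent matrix of $A$, i.e.\ that \eqref{eq:4.81} with $\wh W^\mu$ still parametrizes all $\cL$-resolvents of $\wh A$ as $\{p,q\}$ runs over \eqref{pq}. This is delicate because a naive componentwise shift of a Nevanlinna pair need not preserve property (iii) of Definition~\ref{npair}; the correct bookkeeping proceeds through the Strauss-family identity $T_{A-\mu}(\lambda)=T_A(\lambda+\mu)-\mu$ together with the compatibility of the two boundary triplets, and is precisely what Corollary~\ref{cor:4.4} has already absorbed. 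Granting that corollary, the remainder is the routine transcription sketched above.
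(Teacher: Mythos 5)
Your proposal is correct and follows exactly the route the paper intends: the paper states Theorem~\ref{Lres1} without a separate proof, treating it as the $(A3')$-analogue of Theorem~\ref{Lres2} obtained by combining Theorem~\ref{thm:3.2}, the shifted $\cL$-resolvent matrix $W^\mu(\lambda)=W(\lambda-\mu)$ from Corollary~\ref{cor:4.4}, Lemma~\ref{lem:V}, Proposition~\ref{prop:4.21}, and the same uniqueness argument for $F$. Your reconstruction assembles precisely these ingredients (and even flags the one point the paper glosses over, namely that the real shift of the parameter pairs is legitimately absorbed into Corollary~\ref{cor:4.4}), so it matches the paper's approach.
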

\begin{corollary}\label{mdescr}
Let the AIP data set satisfies $(A1)-(A3)$ and let $\ran C_2=\cL$.
 Then the formula
\begin{equation}\label{Mdescr}
    m(\lambda) =(\theta_{11}(\lambda)q(\lambda)+ \theta_{12}(\lambda)p(\lambda))
    (\theta_{21}(\lambda)q(\lambda)+ \theta_{22}(\lambda)p(\lambda))^{-1}
\end{equation}
 establishes the 1-1 correspondence between the set of all solutions
$m(\lambda)$ of \linebreak
$AIP(B_1,B_2,C_1,C_2,K)$ and the set of all equivalence
classes of Nevanlinna pairs $\{p,q\}\in\wt N(\cL)$ of the form~\eqref{pq}.
\end{corollary}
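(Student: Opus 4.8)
The plan is to obtain the corollary from the pair-description in Theorem~\ref{Lres2}, by showing that the extra hypothesis $\ran C_2=\cL$ forces every normalized solution pair $\{\varphi,\psi\}$ to be equivalent to an honest Nevanlinna function $m\in N[\cL]$, and then reading off~\eqref{Mdescr} from~\eqref{AIPdescr} through a cancellation of the common right factor.

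First I would record the principle linking the two formulations of a solution. A normalized pair $\{\varphi,\psi\}$ is equivalent to $\{I_\cL,m\}$ with $m\in N[\cL]$ exactly when $\varphi(\lambda)$ is boundedly invertible; in that case $m=\psi\varphi^{-1}$, and the unitary identification $\cU\colon\cH(m)\to\cH(\varphi,\psi)$ recalled before Lemma~\ref{lm:2.} transports the function form of the interpolation conditions (with target $\cH(m)$, as in the introduction) into the pair form of Theorem~\ref{thm:3.2} (with target $\cH(\varphi,\psi)$), and back. Hence the function solutions $m$ and the normalized pair solutions with invertible $\varphi$ are in bijective correspondence via $m=\psi\varphi^{-1}$.

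The crucial step, and the place where $\ran C_2=\cL$ enters, is the invertibility of $\varphi(\lambda)$. Let $\wt A$ be any selfadjoint exit-space extension of $\wh A$ producing $\{\varphi,\psi\}$ via~\eqref{eq:1.1}, as in Theorem~\ref{thm:3.2}. Since $\wh A\subseteq\wt A$ and, by~\eqref{eq:3.5}, the $\cL$-component of $\ran\wh A$ is precisely $\{C_2h:h\in\cX\}=\ran C_2=\cL$, one has $P_\cL\ran\wt A=\cL$. By Lemma~\ref{lm:1.1}(ii) this gives $\ker\varphi(\lambda)=\{0\}$ for all $\lambda\in\cmr$, and, $\cL$ being finite dimensional, injectivity upgrades to bounded invertibility. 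Thus \emph{every} solution pair is equivalent to some $m\in N[\cL]$, and conversely every such $m$ is a solution; crucially the conclusion holds uniformly, for all admissible $\wt A$ at once, since the estimate $P_\cL\ran\wt A\supseteq\ran C_2$ does not depend on the extension.

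Finally I would substitute and cancel. Writing $\Delta(\lambda)=\wh w_{21}(\lambda)q(\lambda)+\wh w_{22}(\lambda)p(\lambda)$, formula~\eqref{AIPdescr} with $\Theta=[\theta_{ij}]$ gives $\psi=(\theta_{11}q+\theta_{12}p)\Delta^{-1}$ and $\varphi=(\theta_{21}q+\theta_{22}p)\Delta^{-1}$, so that the factor $\Delta^{-1}$ drops out of $m=\psi\varphi^{-1}$ and yields~\eqref{Mdescr}; the denominator $\theta_{21}q+\theta_{22}p=\varphi\Delta$ is invertible by the previous paragraph. Composing the bijection $\{p,q\}\mapsto\{\varphi,\psi\}$ of Theorem~\ref{Lres2}, over parameters of the form~\eqref{pq}, with the bijection $\{\varphi,\psi\}\mapsto m$ just established gives the asserted $1$--$1$ correspondence between solutions $m$ and classes $\{p,q\}$ of the form~\eqref{pq}. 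The main obstacle is exactly this uniform invertibility of $\varphi(\lambda)$: one must guarantee $\ker\varphi(\lambda)=\{0\}$ for \emph{all} exit-space extensions simultaneously, which is what $\ran C_2=\cL$ delivers through Lemma~\ref{lm:1.1}(ii).
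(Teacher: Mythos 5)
Your proof is correct and is essentially the argument the paper leaves implicit for this corollary: Theorem~\ref{Lres2} parametrizes the normalized pair solutions, the hypothesis $\ran C_2=\cL$ together with Lemma~\ref{lm:1.1} forces $\varphi(\lambda)$ to be invertible for \emph{every} exit-space extension (since $P_\cL\,\ran\wt A\supseteq P_\cL\,\ran \wh A=\ran C_2$), the unitary $\cU$ identifies pair solutions having invertible $\varphi$ with function solutions via $m=\psi\varphi^{-1}$, and the common right factor $(\wh w_{21}q+\wh w_{22}p)^{-1}$ cancels to yield~\eqref{Mdescr} --- precisely the reasoning the paper itself spells out in its Hamburger moment example. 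One remark in your favor: you correctly invoke part (ii) of Lemma~\ref{lm:1.1} (density of $P_\cL\,\ran\wt A$ controls $\ker\varphi$), which is the part actually needed, whereas the paper's prose immediately after that lemma cites part (i).
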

In the case when the AIP data set satisfy $(A1)$, $(A2)$, and $(A3')$ similar formula
can be written in terms of the mvf $\Theta^\mu(\lambda)$.

\section{Examples}\label{Examples}
\subsection{Tangential interpolation problem.} Let $
\lambda_j\in\dC_+$, $\xi_j\in\dC^d$, $\eta_j\in\dC^d$ $(1\le j\le n)$. Consider the
following problem. Find $m\in N^{d\times d}:=N(\dC^d)$ such that
\begin{equation}\label{Tan1}
m(\lambda_j)\eta_j=\xi_j\quad (1\le j\le n).
\end{equation}
The problem~\eqref{Tan1} is called {\it tangential} (or one-sided) interpolation problem
and was considered first by I. Fedchina~\cite{Fed72} in the Schur class.
In~\cite{KKhYu}, \cite{Kh1} the inclusion of this (and more general bitangential)
problem into the scheme of AIP was demonstrated.

For the case of Nevanlinna class let us set $B_1=I_n$,
\[
   B_2=\mbox{diag }
    (\lambda_1\,\dots\,\lambda_n ),\quad
   C_1=\left[\begin{array}{lll}
    \xi_1&\cdots & \xi_n
      \end{array}\right],\quad
  C_2=\left[\begin{array}{lll}
    \eta_1&\ \cdots & \eta_n
  \end{array}\right],
\]
and let
\begin{equation}\label{Pick}
    P=\left[\frac{\eta_k^*\xi_j-\xi_k^*\eta_j}{\lambda_j-\bar\lambda_k}\right]_{j,k=1}^n
\end{equation}
be the unique solution of the Lyapunov equation
\begin{equation}\label{Lyap}
    PB_2-B_2^*P=C_2^*C_1-C_1^*C_2.
\end{equation}
Assume that $P$ is nonnegative and nondegenerate and that $\ran C_2=\dC^d$. Then the
data set $(B_1,B_2,C_1,C_2,P)$ satisfies the assumptions $(A1)-(A3)$. Consider the AIP
corresponding to this data set. Due to Lemma~\ref{lm:1.1} and Proposition~\ref{Funique}
every AIP solution is equivalent to a mvf $m(\cdot)\in N^{d\times d}$ and the mapping
$F:\cH\to\cH(m)$ in $(C1)$, $(C2)$ is uniquely defined by~\eqref{FG}. Therefore, the
conditions $(C1)$, $(C2)$  can be rewritten as
\begin{equation}\label{Tan4}
    (Fu)(\lambda):=\begin{bmatrix} I_d &
    -m(\lambda)\end{bmatrix}\begin{bmatrix} C_1\\ C_2 \end{bmatrix}(B_2-\lambda)^{-1}u\in\cH(m)\quad(u\in\dC^n);
\end{equation}
\begin{equation}\label{Tan5}
    \|Fu\|^2_{\cH(m)}\le(Pu,u)\quad(u\in\dC^n).
\end{equation}
We claim that the problem~\eqref{Tan1} is equivalent to the problem~\eqref{Tan4},
\eqref{Tan5}. Indeed, the condition $(C1)$ takes the form
\[
\left[\frac{\xi_j-m(\lambda)\eta_j}{\lambda-\lambda_j}\right]_{j=1}^nu\in\cH(m),\quad
u\in\dC^n,
\]
which is equivalent to $(B1)$. Moreover, if  $m(\cdot)$ has the integral representation
\[
m(\lambda)=\alpha+\beta\lambda+\int_\dR\left(\frac{1}{t-\lambda}-\frac{t}{1+t^2}\right)d\sigma(t),
\]
where $\alpha,\beta\in\dC^{d\times d}$, $\alpha=\alpha^*$, $\beta\ge 0$ and $\sigma(t)$
is a nondecreasing $d\times d$- valued mvf, then the vvf $(Fu)(\lambda)$ takes the form
\[
(Fu)(\lambda)=\left[\beta\eta_j+\int_\dR\frac{d\sigma(t)\eta_j}{(t-\lambda)(t-\lambda_j)}\right]_{j=1}^n
u.
\]
Due to~\cite[Theorem 2.5]{AG04} one obtains
\begin{equation}\label{Tan6}
\begin{split}
    \|Fu\|^2_{\cH(m)}
    &=u^*\left[\eta_k^*\left(\beta+\int_\dR\frac{d\sigma(t)}{(t-\lambda_j)(t-\bar\lambda_k)}\right)\eta_j\right]_{j,k=1}^n
u\\
&=u^*\left[\eta_k^*\frac{m(\lambda_j)-m(\lambda_k)^*}{\lambda_j-\bar\lambda_k}\eta_j\right]_{j,k=1}^nu.
\end{split}
\end{equation}
Thus, \eqref{Tan5} is implied by \eqref{Tan6} and \eqref{Tan1}.

More general bitangential interpolation problems in the classes of Nevanlinna pairs with
multiple points can be included in the AIP by using the data set $(B_1,B_2,C_1,C_2,P)$:
\begin{enumerate}
    \item
$ B_1=I_N$, $B_2=\mbox{diag }
    (J(\lambda_1)\,\dots\,J(\lambda_\ell) )$,
 $\lambda_j\in\cmr$ $(1\le j\le \ell)$, and $J(\lambda_j)$ is a Jordan cell,
corresponding to the eigenvalue $\lambda_j$
\[
J(\lambda_j)=\left[\begin{array}{llll}
    \lambda_j& 1     &      & \\
             &\ddots & \ddots & \\
             &       &\ddots & 1\\
             &       &       &\lambda_j
      \end{array}\right],\quad (1\le j\le \ell);
\]
of the order $n_j$,  $n=n_1+n_2+\dots+n_\ell$.
\item
$
   C_1=\left[\begin{array}{lll}
    \xi_1&\cdots & \xi_n
      \end{array}\right],\quad
  C_2=\left[\begin{array}{lll}
    \eta_1&\ \cdots & \eta_n
  \end{array}\right];
$
\item
$P$ is a nonnegative solution $P$ of~\eqref{Lyap}.
\end{enumerate}

If the set $\{\lambda_j\}_{j=1}^\ell$ contains symmetric points then the solution $P$ of
the Lyapunov equation~\eqref{Lyap} is not unique. Assume that there is a nonnegative
nondegenerate solution $P$ of~\eqref{Lyap}. Then the AIP corresponding to the data set
$(B_1,B_2,C_1,C_2,P)$ can be formulated as follows. Find a Nevanlinna pair
$\{\varphi,\psi\}$ such that:
\begin{equation}\label{Tan8}
    (Fu)(\lambda):=\begin{bmatrix} \varphi(\lambda) &
    -\psi(\lambda)\end{bmatrix}\begin{bmatrix} C_1\\ C_2 \end{bmatrix}(B_2-\lambda
)^{-1}u\in\cH(m)\quad(u\in\dC^n);
\end{equation}
\begin{equation}\label{Tan9}
    \|Fu\|^2_{\cH(m)}\le(Pu,u)\quad(u\in\dC^n).
\end{equation}
One can show that every solution $\{\varphi,\psi\}$ of the problem~\eqref{Tan8},
\eqref{Tan9} satisfies the Parseval equality
\[
 \|Fu\|^2_{\cH(m)}=(Pu,u)\quad(u\in\dC^n).
\]
Regular bitangential interpolation problems in the Schur and Nevanlinna classes were
studied in~\cite{Nud77}, \cite{BH83}, \cite{Dym89}, \cite{Kh1}, \cite{BGR},
\cite{ABDS93}, \cite{ABGR94}. Singular tangential and bitangential interpolation
problems considered in~\cite{Fed75},~\cite{Nud77},~\cite{Dym89}, \cite{Bru91},
\cite{BolDym98}, \cite{Dym01} can be included in the above consideration by imposing the
assumption (A2) on the data set.

\subsection{Hamburger moment problem.} Let $s_j\in\dC^{d\times d}$, $j\in\dN$
and let $S_n$ be the Hankel block matrix
\[
 S_n=(s_{i+j})_{i, j=0}^n.
\]
A $\dC^{d\times d}$- valued nondecreasing right continuous function $\sigma(t)$ is
called a solution of the Hamburger moment problem if
\begin{equation}\label{eq:6.1H}
\int t^j d\sigma(t)=s_j \quad (j\in\dN).
\end{equation}
It is known that the Hamburger moment problem~\eqref{eq:6.1H} is
solvable iff $S_n\geq 0$ for all $n\in\dN$. Due to
Hamburger-Nevanlinna theorem a function $\sigma(t)$ is a solution of
the Hamburger moment problem~\eqref{eq:6.1H} if and only if the
associated mvf
\[
m(\lambda)=\int_{-\infty}^\infty\frac{d\sigma(t)}{t-\lambda}
\]
has the following nontangential asymptotic at $\infty$
\begin{equation}\label{eq:6.2H}
    m(\lambda)\sim-\frac{s_0}{\lambda}-\frac{s_1}{\lambda^2}-\cdots-\frac{s_{2n}}{\lambda^{2n+1}}
    +O(\frac{1}{\lambda^{2n+2}}) \quad(\lambda\hat\to\infty)
\end{equation}
for every $n\in\dN$.

 Let $\cL=\dC^d$, let $\cX$ be the space of all vector polynomials
\begin{equation}\label{eq:6.0}
    h(X)=\sum_{j=0}^n h_jX^j,\quad h_j\in\cL,
\end{equation}
and let the nonnegative form $K(h,h)$ be defined by
\begin{equation}\label{eq:6.0K}
    K(h,h)=\sum_{j,k=0}^n (s_{j+k}h_j,h_k)_{\cL}.
\end{equation}
Assume that all the matrices are nondegenerate and consider the closure $\cH$ of the
space $\cX$ endowed with the inner product $K(\cdot,\cdot)$. Then the closure  $M$ of
the multiplication operator $M_0$ in $\cX$ is a symmetric operator in $\cH$. The moment
problem~\eqref{eq:6.1H} is called {\it indeterminate} if the defect numbers of $M$ are
equal to $d$. As was shown in~\cite{Berg02} the scalar moment problem ($d=1$) is
indeterminate if and only if there exists $\delta>0$ such that
\begin{equation}\label{eq:6.1h}
    S_n\ge \delta>0\quad\mbox{ for all }n\in\dN.
\end{equation}
Slight modification of the proof of this statement in~\cite{Berg02} shows that the
condition~\eqref{eq:6.1h} is also necessary and sufficient for the moment
problem~\eqref{eq:6.1H} to be  indeterminate for arbitrary $d$.

Let us consider the abstract interpolation problem in the class $N^{d\times d}$
corresponding to the indeterminate moment problem~\eqref{eq:6.1H}. Define the operators
$B_1,B_2:\cX\to\cX$ and $C_1,C_2:\cX\to\cL$ by the equalities
\begin{equation}\label{eq:6.3H}
        B_1h=\frac{h(X)-h_0}{X},\quad B_2h=h,\quad
        C_1h=\sum_{j=1}^n s_{j-1}h_j,\quad C_2h=-h(0).
\end{equation}
Then the data set $(B_1,B_2,C_1,C_2,K)$ satisfies the assumption
(A1). (A2) is clearly in force, since $\ker K=\{0\}$. Let us show
that (A3) is also in force.
\begin{proposition}\label{B12C12}
    The operators $B_1,C_1,C_2$ admit continuous extensions to
    the operators $\wt B_1\in[\cH]$, and $\wt C_1,\wt
    C_2\in[\cH,\cL]$.
\end{proposition}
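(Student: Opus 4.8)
The plan is to reduce the whole statement to the boundedness of $B_1$, handling $C_2$ directly and deducing $C_1$ from $B_1$. Throughout I read the form \eqref{eq:6.0K} as $K(h,h)=\langle S_n\mathbf h,\mathbf h\rangle$ for the coefficient vector $\mathbf h=(h_0,\dots,h_n)$, and I use the indeterminacy hypothesis in the form \eqref{eq:6.1h}, i.e. $S_n\ge\delta I$ uniformly in $n$. The latter yields at once the degree–independent coefficient estimate $\sum_j\|h_j\|_\cL^2\le\delta^{-1}K(h,h)$, which is the one quantitative input I expect to use repeatedly.

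For $C_2$ this is immediate. Since $C_2h=-h_0$ by \eqref{eq:6.3H}, the coefficient estimate gives $\|C_2h\|_\cL^2=\|h_0\|_\cL^2\le\delta^{-1}K(h,h)$, so $C_2$ extends to $\wt C_2\in[\cH,\cL]$ with $\|\wt C_2\|\le\delta^{-1/2}$. For $C_1$ I would first record the algebraic identity $(C_1h,u)_\cL=K(B_1h,u)$ for every $u\in\cL$ viewed as a constant polynomial: expanding $K(B_1h,u)$ and keeping only the $k=0$ term gives $\sum_i(s_i(B_1h)_i,u)_\cL=\sum_{j\ge1}(s_{j-1}h_j,u)_\cL=(C_1h,u)_\cL$. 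Combining this with $\|u\|_\cH^2=(s_0u,u)_\cL\le\|s_0\|\,\|u\|_\cL^2$ and the Cauchy–Schwarz inequality for $K$ gives $\|C_1h\|_\cL\le\|s_0\|^{1/2}\|B_1h\|_\cH$, so once $B_1$ is known to be bounded, $C_1$ extends to $\wt C_1\in[\cH,\cL]$ as well.

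The substance of the proposition is therefore the boundedness of $B_1$, and this is where I expect the main obstacle to sit. On polynomials $B_1$ is nothing but the difference–quotient (generalized backward shift) operator $R_0h=(h(\cdot)-h(0))/(\cdot)$, and one must control it \emph{uniformly in the degree} of $h$. Here the naive estimates break down: dividing by $X$ is harmless for large $|t|$, but near $t=0$ a sup–norm bound on $R_0h$ costs a factor depending on $\deg h$, so no degree–free estimate is visible from \eqref{eq:6.0K} alone. The way out is to use indeterminacy structurally. Since \eqref{eq:6.1h} is equivalent to the moment problem being indeterminate, the completion $\cH$ is a de Branges space of entire functions in which the point evaluation $h\mapsto h(z)$ is bounded at \emph{every} $z\in\dC$ (equivalently $\sum_k\|p_k(z)\|^2<\infty$ for the orthonormal polynomials $p_k$). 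Granting this, $\cH$ is invariant under the difference–quotient operators and $R_0$ is bounded on it, a structural property of such spaces; one sees the mechanism through the closed graph, since $h_n\to h$ and $R_0h_n\to g$ in $\cH$ force $h_n(z)\to h(z)$ and $(h_n(z)-h_n(0))/z\to(h(z)-h(0))/z$ pointwise, identifying $g$ with $R_0h$. Thus $B_1=R_0$ extends to $\wt B_1\in[\cH]$.

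So I expect the proof to run in the order $C_2$, then $B_1$, then $C_1$, with the entire–function realization of $\cH$ in the indeterminate case as the essential device. The hard part will be upgrading the coefficient bound coming from \eqref{eq:6.1h} to boundedness of all point evaluations on $\dC$ — not merely on the unit disc, where the crude power–series estimate stops — and this is precisely the content of indeterminacy; without \eqref{eq:6.1h} the matrices $S_n$ degenerate and $B_1$ genuinely fails to be bounded. Once that entire–function structure is in place, the boundedness of the difference–quotient operator $B_1$, and with it of $C_1$, follows formally from the reductions above.
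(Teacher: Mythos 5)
Your handling of $C_2$ and $C_1$ coincides with the paper's proof: the estimate $\|h_0\|_\cL^2\le\delta^{-1}K(h,h)$ coming from \eqref{eq:6.1h}, and the identity $(C_1h,u)_\cL=K(B_1h,u)$ combined with the Cauchy--Schwarz inequality for $K$, which reduces $C_1$ to $B_1$, are exactly the arguments given there. The gap is in the step you yourself call the substance of the proposition: the boundedness of $B_1$.

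The closed-graph mechanism you describe proves only that the closure of the graph of $B_1$ is \emph{single-valued}: if $h_n\to 0$ and $B_1h_n\to g$ in $\cH$, then continuity of point evaluations forces $g=0$. It does not prove that the domain of that closure is all of $\cH$. Everywhere-definedness is precisely the invariance statement $R_0\cH\subseteq\cH$, which you assert as ``a structural property of such spaces'', and that assertion does not follow from what you have established. Bounded point evaluations at every point of $\dC$ do \emph{not} imply difference-quotient invariance: for $E(z)=(z+i)(z-1)$ the de Branges space $\cH(E)$ is the one-dimensional space $\dC\cdot(z-1)$, in which every evaluation is trivially bounded, yet $R_0\bigl(c(z-1)\bigr)=c\notin\cH(E)$. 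So the chain ``indeterminacy $\Rightarrow$ entire-function realization with bounded evaluations $\Rightarrow$ invariance and boundedness of $R_0$'' breaks at the second arrow; you have also misplaced the difficulty, since upgrading \eqref{eq:6.1h} to bounded evaluations on all of $\dC$ is not what is missing. Invariance for the moment-problem space is true, but it uses its specific structure (not merely that it is a space of entire functions containing the polynomials densely), and given your closed-graph step it is \emph{equivalent} to the boundedness of $B_1$ being proved — the whole difficulty has been moved into an unproved claim.

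The paper fills exactly this hole with Kre\u{\i}n's theorem \cite{Kr49}: for an indeterminate problem $\rho(M,\cL)=\dC$, where $M$ is the closure of multiplication by $X$; in particular $0\in\rho(M,\cL)$, i.e. $\cH=\ran M\dotplus\cL$ with $\ran M$ closed. Since $\wt B_1^{-1}=\{\{h,Mh+u\}:h\in\dom M,\ u\in\cL\}$, this decomposition yields $\dom\wt B_1=\ran\wt B_1^{-1}=\cH$ and $\mul\wt B_1=\ker\wt B_1^{-1}=\ran M\cap\cL=\{0\}$, so $\wt B_1$ is a closed, everywhere defined, single-valued operator, hence bounded. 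Note that Kre\u{\i}n's decomposition is literally the invariance you need: if $F=Mg+u$ then $R_0F=g$. To repair your argument you must either prove difference-quotient invariance for the de Branges space of an indeterminate (here matrix-valued) moment problem or cite it precisely; it is a theorem of the same depth as Kre\u{\i}n's, not a formal consequence of bounded point evaluations.
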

\begin{proof}
Let $\wt B_1$ be the closure of the graph of the operator $B_1$.
Then
\[
\wt B_1^{-1}=\{\{h,Mh+u\}:\,h\in\dom M,\,u\in\cL\}.
\]
As was shown in~\cite{Kr49} $\rho(M,\cL)=\dC$ in the case of indeterminate moment
problem~\eqref{eq:6.1H}. In particular, $0\in\rho(M,\cL)$, that is
\[
\ran\wt B_1^{-1}=\ran M\dotplus\cL=\cH,\quad \ker\wt B_1^{-1}=\ran
M\cap\cL=\{0\}.
\]
Therefore $\wt B_1$ is the graph of a bounded  operator in $\cH$ for
which we will keep the same notation.

It follows from~\eqref{eq:6.1h} that for every polynomial $h\in\cX$
\[
\begin{split}
\|h\|_\cH^2&=K(h,h)=\sum_{j,k=0}^n h_k^*s_{j+k}h_j\\
&\ge\delta\sum_{j,k=0}^n \|h_j\|^2 \ge\delta\|h_0\|^2.
\end{split}
\]
Therefore,
\[
\|C_2h\|^2\le\frac{1}{\delta}\|h\|^2_\cH
\]
and, hence, the operator $C_2:\cX\subset\cH\to\cL$ is bounded. Let us note that the
boundedness of $C_2$ is implied also by the fact that $0\in\rho(M,\cL)$, since
$C_2h=\cP_{M,\cL}(0)h$, where $\cP_{M,\cL}(\lambda)$ is the skew projection onto $\cL$
in the decomposition $\cH\oplus\cL=\ran(M-\lambda)\dotplus \cL$.

The boundedness of $C_1:\cX\subset\cH\to\cL$ is implied by the
equality
\begin{equation}\label{C1}
    (C_1h,u)_\cL=K\left(\frac{h(X)-h(0)}{X},u\right)=K(B_1h,u).
\end{equation}
Indeed, it follows from~\eqref{C1} that
\[
\begin{split}
 |(C_1h,u)_\cL|&\le K(B_1h,B_1h)^{1/2}K(u,u)^{1/2}\\
 &=\|B_1h\|_\cH(s_0u,u)^{1/2}\\
 &\le\|B_1\|\|s_0^{1/2}\|\|h\|_\cH\|u\|_\cL
\end{split}
\]
and hence $C_1$ is bounded and
\[
\|C_1\|\le\|B_1\|\|s_0^{1/2}\|.
\]
\end{proof}
\begin{remark}
The definition \eqref{eq:6.3H} of $C_1$ can be rewritten as
\begin{equation}\label{C1A}
    C_1h=\wt h(0),
\end{equation}
where the adjacent polynomial $\wt h$ is defined by
\begin{equation}\label{Adj}
    (\wt
    h(\lambda),u)_\cL=K\left(\frac{h(X)-h(\lambda)}{X-\lambda},u\right),\quad
    u\in\cL.
\end{equation}
\end{remark}
Let us consider the $[\cH,\cL^2]$-valued operator function
\begin{equation}\label{eq:6.7H}
    G(\lambda)=\left[%
\begin{array}{cc}
  C_1 \\
  C_2 \\
\end{array}%
\right](I-\lambda B_1)^{-1},\quad\lambda\in\dC.
\end{equation}
Recall some useful formulas (see~\cite{Kh96})
\begin{equation}\label{eq:6.4H}
(I-\lambda B_1)^{-1}h=\frac{Xh(X)-\lambda h(\lambda)}{X-\lambda},\quad (h\in\cX).
\end{equation}
\begin{equation}\label{CG1}
    C_1(I-\lambda B_1)^{-1}h=\wt h(\lambda),\quad  C_2(I_m-\lambda B_1)^{-1}h=-
    h(\lambda).
\end{equation}
The corresponding abstract interpolation problem can be formulated
as follows. 
Find a Nevanlinna pair $\{\varphi,\psi\}\in\wt N(\dC^d)$, such that
\begin{equation}\label{eq:6.8H}
    Fh:=\begin{bmatrix} \varphi(\lambda) &
    -\psi(\lambda)\end{bmatrix}G(\lambda)h\in\cH(\varphi,\psi);
\end{equation}
\begin{equation}\label{eq:6.9H}
    \|Fh\|^2_{\cH(\varphi,\psi)}\le K(h,h)
\end{equation}
for all $h\in \cX$.

Since the operators $B_1$, $B_2=I$ satisfy the assumption (U) the mapping
$F:\cX\to\cH(\varphi,\psi)$ corresponding to the solution $\{\varphi,\psi\}$ of the AIP
is uniquely defined (see Proposition~\ref{FG}). Moreover, since $\ran C_2=\cL$ it
follows from Lemma~\ref{lm:1.1} that any solution $\{\varphi,\psi\}$ of the AIP is
equivalent to a pair $\{I_d, m(\lambda)\}$, where $m\in N^{d\times d}$.
\begin{theorem}\label{HamAIP}
    Let $m$ be a solution of the $AIP(B_1,B_2,C_1,C_2,K)$,
    which assumes the integral representation~\eqref{eq:6.2},
    and let $F:\cX\to\cH(\varphi,\psi)$ be the mapping corresponding to the
    solution $m$ and given by the formula~\eqref{eq:6.8H}. Then:
\begin{enumerate}
\item $\sigma$ is a solution of the Hamburger moment
problem~\eqref{eq:6.1H};
\item for every polynomial $h\in\cX$ one has
\begin{equation}\label{F1}
    (Fh)(\lambda)=\int_{-\infty}^\infty\frac{d\sigma(t)h(t)}{t-\lambda},
\end{equation}
\begin{equation}\label{F2}
    \|Fh\|^2_{\cH(m)}=K(h,h).
\end{equation}
\end{enumerate}
\end{theorem}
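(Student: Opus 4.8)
The plan is to read all three assertions off a single closed form for $F$. First I would combine the definition~\eqref{eq:6.8H} with the identities~\eqref{CG1}: since $C_1(I-\lambda B_1)^{-1}h=\wt h(\lambda)$ and $C_2(I-\lambda B_1)^{-1}h=-h(\lambda)$, and since the solution may be taken in the representative $\{\varphi,\psi\}=\{I_d,m\}$ (so that $\cH(\varphi,\psi)=\cH(m)$ and $Fh$ is transported to $\cH(m)$ by the unitary $\cU$ of Section~2), I obtain
\begin{equation*}
(Fh)(\lambda)=\wt h(\lambda)+m(\lambda)\,h(\lambda),
\end{equation*}
where $\wt h$ is the adjacent polynomial~\eqref{Adj}. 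A direct evaluation of~\eqref{Adj} against~\eqref{eq:6.0K} for a monomial $h(X)=eX^n$ ($e\in\cL$) then gives $\wt h(\lambda)=(s_0\lambda^{n-1}+s_1\lambda^{n-2}+\dots+s_{n-1})e$, so the coefficients of the adjacent polynomial are precisely the prescribed moments $s_j$.

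To prove~(1) I would exploit the membership $Fh\in\cH(m)$. The assumed integral representation $m(\lambda)=\int_{-\infty}^{\infty}d\sigma(t)/(t-\lambda)$ exhibits $m$ as a Cauchy transform, hence $m\in N_0[\cL]$, so Lemma~\ref{lm:2.}(ii) yields $(Fh)(\lambda)=O(1/\lambda)$ as $\lambda\to\infty$. Inserting the monomial computation and dividing by $\lambda^n$ produces
\begin{equation*}
m(\lambda)e=-\Big(\frac{s_0}{\lambda}+\frac{s_1}{\lambda^2}+\dots+\frac{s_{n-1}}{\lambda^n}\Big)e+O\!\Big(\frac{1}{\lambda^{n+1}}\Big),
\end{equation*}
which is exactly the asymptotics~\eqref{eq:6.2H}. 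As $n\in\dN$ is arbitrary, $m$ carries the full expansion, and the Hamburger--Nevanlinna theorem forces $\int t^j\,d\sigma(t)=s_j$ for every $j$, i.e. $\sigma$ solves~\eqref{eq:6.1H}.

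Once~(1) holds, the moments of $\sigma$ coincide with the data $s_j$, so $\wt h$ acquires the integral form $\wt h(\lambda)=\int\frac{h(t)-h(\lambda)}{t-\lambda}\,d\sigma(t)$; adding $m(\lambda)h(\lambda)=\int\frac{h(\lambda)}{t-\lambda}\,d\sigma(t)$ telescopes the numerator and yields~\eqref{F1}. For~\eqref{F2} I would invoke the de~Branges description of the inner product in $\cH(m)$ (\cite[Theorem~2.5]{AG04}): since ${\mathsf N}_\omega^{m}(\lambda)=\int\frac{d\sigma(t)}{(t-\lambda)(t-\bar\omega)}$, every $f(\lambda)=\int\frac{d\sigma(t)g(t)}{t-\lambda}$ satisfies $\|f\|_{\cH(m)}^2=\int g(t)^*\,d\sigma(t)\,g(t)$, as one checks against the reproducing property. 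Applying this with $f=Fh$, $g=h$ and expanding $h(t)=\sum_j h_jt^j$ gives $\|Fh\|_{\cH(m)}^2=\sum_{j,k}h_k^*s_{j+k}h_j=K(h,h)$, which is~\eqref{F2}.

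The main obstacle is part~(1): converting the single qualitative fact $Fh\in\cH(m)$ into the infinitely many moment identities. This rests on the sharp decay estimate of Lemma~\ref{lm:2.}(ii) — hence on recognising $m\in N_0[\cL]$ from its Cauchy representation — together with the bookkeeping that matches the coefficients of $\wt h$ with the successive terms in the asymptotic expansion of $m$. That matching is the precise point at which the abstract interpolation data $(B_1,B_2,C_1,C_2,K)$ is decoded as the moment sequence; the remaining steps~\eqref{F1}--\eqref{F2} are then essentially the telescoping identity and the standard norm formula in $\cH(m)$.
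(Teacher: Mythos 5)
Your proposal reproduces the paper's own proof almost step for step: the monomial computation $(Fh)(\lambda)=\bigl(\lambda^{n}m(\lambda)+\lambda^{n-1}s_0+\dots+s_{n-1}\bigr)e$ (the paper obtains it by evaluating $G(\lambda)h$ directly, you via the adjacent polynomial — the same computation, by~\eqref{CG1}), then Lemma~\ref{lm:2.}(ii) plus the Hamburger--Nevanlinna theorem for assertion (1), the telescoping of the Cauchy kernel for~\eqref{F1}, and \cite[Theorem 2.5]{AG04} for~\eqref{F2}. All of those steps are carried out correctly.

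The one point where you diverge, and where your version is weaker, is the licence to invoke Lemma~\ref{lm:2.}(ii). You assert that the Cauchy-transform representation of $m$ by itself gives $m\in N_0[\cL]$. It does not: the representation $m(\lambda)=\int_\dR d\sigma(t)/(t-\lambda)$ converges for any nondecreasing $\sigma$ with $\int_\dR d\sigma(t)/(1+|t|)<\infty$, and such $\sigma$ need not be finite (take $d\sigma(t)=|t|\,dt/(1+t^2)$, for which $\sigma(\dR)=\infty$ while the integral converges and yields a Nevanlinna function with $y\,\IM m(iy)\to\infty$). Finiteness of $\sigma$ is exactly what the proof of Lemma~\ref{lm:2.}(ii) uses, so under the literal reading of the hypothesis your first application of that lemma is unjustified. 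The paper closes this hole by a two-step bootstrap that you skip: first it applies part (i) of the lemma (valid for every $m\in N[\cL]$) to the monomials $j=0$ and $j=1$, obtaining $m(\lambda)=O(1)$ and then $\lambda m(\lambda)+s_0=O(1)$, hence $m(\lambda)=O(1/\lambda)$ nontangentially; this decay forces $\sigma(\dR)<\infty$, i.e.\ $m\in N_0[\cL]$, and only then is part (ii) applied for all $j$ to get the full asymptotic expansion. So either state explicitly that you read the hypothesis as ``Cauchy transform of a \emph{finite} measure'' (a defensible reading, since any moment-problem solution is finite), or insert the paper's $j=0,1$ bootstrap; with that repair your argument coincides with the paper's.
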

\begin{proof}
For a monic polynomial $h=uX^j$ $(u\in\cL)$ one obtains
from~\eqref{eq:6.3H}-\eqref{eq:6.7H}
\[
\begin{split}
(Gh)(\lambda)&=\left[%
\begin{array}{cc}
  C_1 \\
  C_2 \\
\end{array}%
\right](X^j+\lambda X^{j-1}+\dots +\lambda^j)u\\
&=\left[%
\begin{array}{cc}
  (\lambda^{j-1}s_0+\dots+\lambda s_{j-2}+s_{j-1})u\\
  -\lambda^ju \\
\end{array}%
\right]
\end{split}
\]
and hence
\begin{equation}\label{eq:6.10}
 \begin{split}
(Fh)(\lambda)&=\left[%
\begin{array}{cc}
  I_d & -m(\lambda)
\end{array}\right]%
G(\lambda)h \\
&=
  (\lambda^{j}m(\lambda)+\lambda^{j-1} s_{0}+\dots +s_{j-1})u\\
&=\int_{-\infty}^\infty\frac{t^j}{t-\lambda}d\sigma(t)u.
\end{split}
\end{equation}
When $j=0$ it follows from \eqref{eq:6.10} and Lemma~\ref{lm:2.} that $m(\lambda)=O(1)$
if $\lambda\wh\to\infty$. Setting $j=1$ one derives from \eqref{eq:6.10} that
\[
\lambda m(\lambda)+s_0=O(1)\quad (\lambda\wh\to\infty).
\]
Therefore, $m(\lambda)=O(\frac{1}{\lambda})$ and applying
Lemma~\ref{lm:2.}, (ii) gives
\[
m(\lambda)+\frac{s_0}{\lambda}=O(\frac{1}{\lambda^2}).
\]
And similarly, for $h=u X^n$ one obtains from (C1) $(\lambda^n
m(\lambda)+s_0\lambda^{n-1}+\dots+s_{n-1})u\in\cH(m)$, or by Lemma~\ref{lm:2.}, (ii)
\[
m(\lambda)+\frac{s_0}{\lambda}+\dots+\frac{s_{n-1}}{\lambda^n}=O(\frac{1}{\lambda^{n+1}})
\]
for arbitrary $n\in\dN$. In view of Hamburger-Nevanlinna theorem
this implies that $\sigma$ is a solution of the Hamburger moment
problem~\eqref{eq:6.1H}.

For arbitrary polynomial $h=\sum_{j=0}^nu_jX^j\in\cX$ the
formula~\eqref{eq:6.10} can be rewritten as
\[
(Fh)(\lambda)=\int_{-\infty}^\infty\frac{d\sigma(t)h(t)}{t-\lambda}.
\]
 Applying the formula for
the inner product in $\cH(m)$ (see~\cite[Theorem 2.5]{AG04}) one obtains
\[
\|Fh\|^2_{\cH(m)}=\int_{-\infty}^\infty (d\sigma(t)h(t),h(t))
=\sum_{j,k=0}^n(s_{j+k}u_j,u_k)_\cL=K(h,h).
\]
This proves~\eqref{F2}.

Conversely, let $\sigma$ be a solution of the Hamburger moment problem~\eqref{eq:6.1H}.
Then it follows from \eqref{F1} and Theorem 2.5 from~~\cite{AG04} that $Fh\in\cH(m)$ for
arbitrary polynomial $h\in\cX$. This proves (C1). (C2) is implied by the
equality~\eqref{F2}.
\end{proof}

To calculate the $\cL$-resolvent matrix let us introduce a system of
matrix polynomials $\{P_n(\lambda)\}_{n=0}^\infty$ orthogonal with
respect to the form $K$:
\[
K(P_ju,P_kv)=v^*u\delta_{jk},\quad j,k\in\dN
\]
and a system of adjacent polynomials $\{\wt
P_k(\lambda)\}_{n=0}^\infty$
\[
v^*\wt
P_k(\lambda)u=K\left(\frac{P_k(t)-P_k(\lambda)}{t-\lambda}u,v\right)\quad
k\in\dN.
\]
\begin{proposition}
For every $u\in\cL$ the following formulas hold
\begin{equation}\label{C1*}
    C_1^*u=\sum_{k=1}^\infty P_k(t)\wt P_k(0)^*u,\quad
    C_2^*u=-\sum_{k=0}^\infty P_k(t) P_k(0)^*u.
\end{equation}
\end{proposition}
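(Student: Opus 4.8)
The plan is to exploit that, by construction, $\cH$ is the closure of the vector polynomials, so that the orthonormal system $\{P_k(\cdot)u : k\in\dN,\ u\in\cL\}$ is an orthonormal basis of $\cH$. Since Proposition~\ref{B12C12} guarantees that $C_1$ and $C_2$ extend to bounded operators $\cH\to\cL$, the adjoints $C_1^*,C_2^*\colon\cL\to\cH$ are bounded, and each $C_j^*u$ has a norm-convergent Fourier expansion $C_j^*u=\sum_{k}P_k(\cdot)b_k^{(j)}$ with coefficients $b_k^{(j)}\in\cL$. The whole proof reduces to identifying these coefficients, and orthonormality reads them off through
\[
(b_k^{(j)},v)_\cL=(C_j^*u,P_k v)_\cH=\overline{(C_j(P_k v),u)_\cL},\qquad v\in\cL .
\]

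First I would treat $C_2$, which is immediate: from~\eqref{eq:6.3H} one has $C_2(P_k v)=-(P_k v)(0)=-P_k(0)v$, hence $(C_2(P_k v),u)_\cL=-u^*P_k(0)v$, and taking conjugates gives $b_k^{(2)}=-P_k(0)^*u$ for every $k\ge0$. Summing the Fourier series then yields $C_2^*u=-\sum_{k=0}^\infty P_k(\cdot)P_k(0)^*u$, the second asserted identity.

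Next I would treat $C_1$ through its description $C_1h=\wt h(0)$, where the adjacent polynomial $\wt h$ is defined in~\eqref{Adj}. The key observation is that $h\mapsto\wt h$ is linear and that the adjacent polynomial of $P_k(\cdot)v$ is exactly $\wt P_k(\cdot)v$; this follows by substituting $h=P_k v$ into~\eqref{Adj} and comparing with the defining relation for $\wt P_k$. Consequently $C_1(P_k v)=\wt P_k(0)v$, so that $b_k^{(1)}=\wt P_k(0)^*u$, and summing gives $C_1^*u=\sum_{k}P_k(\cdot)\wt P_k(0)^*u$. The term $k=0$ drops out because $P_0$ is a constant polynomial, whence $\wt P_0\equiv0$ by~\eqref{Adj}; this is why the sum starts at $k=1$, matching the statement.

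I expect that there is no single hard obstacle here, only points requiring bookkeeping care: justifying that the series converge in $\cH$ (Parseval together with the boundedness from Proposition~\ref{B12C12}), keeping the complex conjugations consistent when passing from $(C_j(P_kv),u)_\cL$ to the Fourier coefficient, and verifying the density claim that $\{P_k(\cdot)u\}$ spans $\cH$. The last is harmless, since $\cH$ is defined as the completion of the polynomials and the $P_k$ arise from orthonormalizing $\{X^j u\}$, so their closed span is all of $\cH$.
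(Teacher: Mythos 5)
Your proposal is correct and follows essentially the same route as the paper: both identify the Fourier coefficients of $C_j^*u$ with respect to the orthonormal system $\{P_k(\cdot)v\}$ via $(C_j^*u,P_kv)_\cH=(u,C_jP_kv)_\cL$, using $C_1P_kv=\wt P_k(0)v$ (from the adjacent-polynomial formula~\eqref{C1A}) and $C_2P_kv=-P_k(0)v$. Your extra remarks on norm convergence of the series and on $\wt P_0\equiv 0$ (explaining why the first sum starts at $k=1$) are sound refinements of details the paper leaves implicit.
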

\begin{proof}
Indeed, for every $j\in\dN\cup\{0\}$, $u,v\in\cL$ one obtains
from~\eqref{eq:6.3H}, \eqref{C1A}
\[
\begin{split}
(C_1^*u,P_jv)_\cH&=(u,C_1P_jv)_\cL=(u,\wt P_j(0)v)_\cL\\
&=(\wt P_j(0)^*u,v)_\cH=(\sum_{k=1}^\infty P_k(t)\wt
P_k(0)^*u,P_jv)_\cH,
\end{split}
\]
\[
\begin{split}
(C_2^*u,P_jv)_\cH&=(u,C_2P_jv)_\cL=-(u, P_j(0)v)_\cL\\
&=-( P_j(0)^*u,v)_\cH=-(\sum_{k=1}^\infty P_k(t)P_k(0)^*u,P_jv)_\cH,
\end{split}
\]
\end{proof}
Applying the formulas~\eqref{eq:4.24}, \eqref{CG1} and \eqref{C1*} one obtains the
resolvent matrix $\Theta(\lambda)$:
\[
\begin{split}
\theta_{11}(\lambda)u&=u-\lambda \wt C_1(I-\lambda \wt B_1)^{-1}(\sum_{k=0}^\infty
P_k(t)
P_k(0)^*u)=(I+\lambda\sum_{k=0}^\infty \wt P_k(\lambda) P_k(0)^*)u,\\
\theta_{12}(\lambda)u&=\lambda \wt C_1(I-\lambda \wt B_1)^{-1}(\sum_{k=0}^\infty P_k(t)
\wt P_k(0)^*u)=\lambda\sum_{k=0}^\infty \wt P_k(\lambda) \wt P_k(0)^*u,\\
\theta_{21}(\lambda)u&=-\lambda \wt C_2(I-\lambda \wt B_1)^{-1}(\sum_{k=0}^\infty P_k(t)
P_k(0)^*u)=-\lambda\sum_{k=0}^\infty  P_k(\lambda) P_k(0)^*u,\\
\theta_{22}(\lambda)u&=u+\lambda \wt C_2(I-\lambda \wt B_1)^{-1}(\sum_{k=1}^\infty
P_k(t) \wt P_k(0)^*u)=(I-\lambda\sum_{k=1}^\infty P_k(\lambda)\wt P_k(0)^*)u.
\end{split}
\]
Application of general result in Corollary~\ref{mdescr} gives the well known description
of solutions of the moment problem~\eqref{eq:6.1H}
\begin{equation}\label{MPdescr}
\int_\dR\frac{d\sigma(t)}{t-\lambda}=(\theta_{11}(\lambda)q(\lambda)+\theta_{12}(\lambda)p(\lambda))
(\theta_{21}(\lambda)q(\lambda)+\theta_{22}(\lambda)p(\lambda))^{-1},
\end{equation}
when the pair $\{p,q\}$ ranges over the class $N(\dC^d)$.
\subsection{Truncated Hamburger moment problem.}
Let $s_0$, $s_1,\ldots, s_{ 2n}\in\dC^{d\times d}$. A $\dC^{d\times d}$ -- valued
nondecreasing right continuous function $\sigma(\lambda)$ is called a solution of the
truncated Hamburger moment problem if
\begin{equation}\label{eq:6.1}
\int t^j d\sigma(t)=s_j \quad (j=0, 1, \ldots, 2n-1)
\end{equation}
\begin{equation}\label{eq:6.2}
 \int t^{2n} d \sigma(t)\leq s_{2n}.
\end{equation}
It is known that the problem~\eqref{eq:6.1}--~\eqref{eq:6.2} is solvable if and only if
$S_n\geq 0$. A solution $\sigma$ of the problem~\eqref{eq:6.1}--~\eqref{eq:6.2} is
called "exact" if
\begin{equation}\label{eq:6.3}
\int t^{2n} d\sigma(t)=s_{2n}.
\end{equation}
Singular truncated Hamburger moment problem has been studied in~\cite{CurtoF91},
\cite{Bol96}, \cite{AdTk00}.
\begin{theorem}\label{th:6.1}{\rm(\cite{CurtoF91}, \cite{Bol96})} Let
$S_n=(s_{i+j})_{i,j=0}^n$ -- be a nonnegative block Hankel matrix $(s_j\in\dC^{d\times
d})$. The following assertions are equivalent:
\begin{enumerate}
\item[ 1)] The problem~\eqref{eq:6.1}--~\eqref{eq:6.2} has an
"exact" solution;
\item[ 2)] $\ran\left[%
\begin{array}{c}
  s_{n+1} \\
  \vdots \\
  s_{2n} \\
\end{array}%
\right]\subseteq\ran S_{n-1}$; \item[ 3)] $S_n$ admits a nonnegative block Hankel
extension $\wt S_n$.
\end{enumerate}
\end{theorem}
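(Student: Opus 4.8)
The plan is to establish the equivalences through the implications $1)\Rightarrow 2)$, $3)\Rightarrow 2)$, and the converse construction $2)\Rightarrow 3)$ together with $2)\Rightarrow 1)$, combining a short measure-theoretic computation for the first two with a rank-preserving (flat) Hankel extension for the last. Throughout I write $E(t)$ for the $nd\times d$ block column with blocks $I_d,tI_d,\dots,t^{n-1}I_d$, and let $R$ denote the block column with entries $s_{n+1},\dots,s_{2n}$, so that assertion 2) reads $\ran R\subseteq\ran S_{n-1}$.

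For $1)\Rightarrow 2)$ I would use that an exact solution $\sigma$ realizes every moment up to order $2n$, i.e. $s_j=\int t^{j}\,d\sigma(t)$ for $j\le 2n$, the top index being precisely where exactness \eqref{eq:6.3} enters. Then $S_{n-1}=\int E(t)\,d\sigma(t)\,E(t)^{*}$ and $R=\int t^{n+1}E(t)\,d\sigma(t)$. If $c\in\ker S_{n-1}$, then $c^{*}S_{n-1}c=\int (E(t)^{*}c)^{*}\,d\sigma(t)\,(E(t)^{*}c)=0$ forces $d\sigma(t)^{1/2}E(t)^{*}c=0$ $\sigma$-a.e., and hence $R^{*}c=\int t^{n+1}\,d\sigma(t)^{1/2}\,\bigl(d\sigma(t)^{1/2}E(t)^{*}c\bigr)=0$. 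Thus $\ker S_{n-1}\subseteq\ker R^{*}$, which is exactly $\ran R\subseteq\ran S_{n-1}$. The implication $3)\Rightarrow 2)$ is then pure linear algebra: writing the extension as $\wt{S}_n=(s_{i+j})_{i,j=0}^{n+1}\ge 0$, I would take the principal submatrix on the block indices $\{0,1,\dots,n-1,n+1\}$, which by the Hankel pattern equals $\begin{bmatrix} S_{n-1} & R\\ R^{*} & s_{2n+2}\end{bmatrix}$ and is nonnegative as a principal submatrix of a nonnegative matrix; the range inclusion for nonnegative $2\times 2$ block matrices then yields $\ran R\subseteq\ran S_{n-1}$.

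The heart of the theorem is the converse construction, which gives both $2)\Rightarrow 3)$ and $2)\Rightarrow 1)$. I would realize the data inside the operator model used for the full problem: on $\cX$, the vector polynomials of degree $\le n$ equipped with the form $K=S_n$, one passes to the Hilbert space $\cH$, and lets $M$ be multiplication by the variable, defined on polynomials of degree $\le n-1$; its symmetry involves only $s_0,\dots,s_{2n-1}$. The range condition $\ran R\subseteq\ran S_{n-1}$ is exactly what permits extending the shift across the top degree compatibly with the Hankel (self-adjointness) constraint: it allows one to choose the free moment $s_{2n+1}$ so that the Hankel structure extends with the new columns remaining in the existing column span and nonnegativity preserved, which produces the extension in 3). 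Taking this extension rank-preserving and iterating forces all higher moments while maintaining nonnegativity, yielding an infinite nonnegative block Hankel matrix whose sequence, by the Hamburger--Nevanlinna theorem, is represented by a finitely atomic measure $\sigma$ with $\int t^{2n}\,d\sigma=s_{2n}$, i.e. an exact solution, giving 1).

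The hard part will be this flat-extension step. One must verify that the shift induced by $M$ on $\ran S_n$ is well defined and symmetric — equivalently, that $s_{2n+1}$ can be selected consistently with Hankel symmetry — and that genuine nonnegativity, not merely the rank bookkeeping, survives the extension. Both hinge on the range condition, and it is here that the operator-model bookkeeping (well-definedness modulo $\ker K$, symmetry via $s_k=s_k^{*}$) does the real work; the Hamburger--Nevanlinna asymptotics recalled earlier then convert the infinite nonnegative Hankel matrix into the desired exact solution.
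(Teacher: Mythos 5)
First, a point of comparison: the paper does not prove this theorem at all --- it is quoted from the literature, the sentence following the statement attributing $(1)\Leftrightarrow(2)$ to \cite{CurtoF91} and $(2)\Leftrightarrow(3)$ to \cite{Bol96} --- so your argument has to stand entirely on its own. The two implications you actually carry out are essentially correct. Your proof of $3)\Rightarrow 2)$, passing to the principal submatrix of $\wt S_n$ on the block indices $\{0,\dots,n-1,n+1\}$, which equals $\left[\begin{smallmatrix} S_{n-1} & R\\ R^{*} & s_{2n+2}\end{smallmatrix}\right]\ge 0$, and invoking the range inclusion $\ran B\subseteq\ran A$ valid for nonnegative block matrices $\left[\begin{smallmatrix} A & B\\ B^{*} & C\end{smallmatrix}\right]$, is clean and complete. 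In $1)\Rightarrow 2)$ the idea is right, and you correctly locate where exactness \eqref{eq:6.3} enters, but the Cauchy--Schwarz step needs a repair: $\sigma$ is only known to have finite moments up to order $2n$, so the natural bound $|u^{*}R^{*}c|^{2}\le\bigl(\int t^{2n+2}u^{*}d\sigma\,u\bigr)\bigl(\int (E(t)^{*}c)^{*}d\sigma\,(E(t)^{*}c)\bigr)$ is a priori of the form $\infty\cdot 0$. Writing $d\sigma=M(t)\,d\tau$ with a scalar measure $\tau$ and density $M(t)\ge 0$, the hypothesis $\int (E^{*}c)^{*}M(E^{*}c)\,d\tau=0$ gives $M(t)E(t)^{*}c=0$ $\tau$-a.e., hence $R^{*}c=\int t^{n+1}M(t)E(t)^{*}c\,d\tau=0$, every integrand here having degree at most $2n$; this fixes the informal ``$d\sigma^{1/2}$'' computation.

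The genuine gap is that the substantive half of the theorem --- $2)\Rightarrow 3)$ and $2)\Rightarrow 1)$, which is exactly the content of the cited results of Curto--Fialkow and Bolotnikov --- is announced rather than proved. You assert that the range condition ``allows one to choose the free moment $s_{2n+1}$ so that the Hankel structure extends \dots{} with nonnegativity preserved,'' and you yourself label this ``the hard part'' that ``must be verified''; it never is. Concretely: (a) from $\ran R\subseteq\ran S_{n-1}$ one must actually produce $s_{2n+1}$ and $s_{2n+2}$ so that the bordered Hankel matrix is nonnegative, which requires the new column $(s_{n+1},\dots,s_{2n},s_{2n+1})$ to lie in the column space of $S_n$ together with a Schur-complement inequality for $s_{2n+2}$; the tension between the Hankel constraint (the new column reuses the known blocks in shifted positions) and positivity is precisely where the difficulty sits, especially in the degenerate case $\det S_n=0$, $d>1$, which is the case this section of the paper cares about --- Bolotnikov resolves it with the structured pseudoinverse of Lemma~\ref{Pseudo} ($X=X^{*}$, $S_nXS_n=S_n$, $XS_nX=X$, $T\ran X\subseteq\ran X$), an ingredient absent from your sketch. (b) The iteration claim --- that the extension can be chosen rank-preserving and that flat nonnegative Hankel extensions propagate indefinitely --- is itself a theorem (Curto--Fialkow's flat extension theorem) and is not proved. (c) The final appeal should be to Hamburger's solvability theorem (an infinite sequence with all Hankel sections nonnegative has a representing measure), not to the Hamburger--Nevanlinna asymptotic theorem; this point is minor, since any representing measure of the extended infinite sequence is automatically an exact solution of \eqref{eq:6.1}--\eqref{eq:6.2}. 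With (a) and (b) unproved, your note establishes only $1)\Rightarrow 2)$ and $3)\Rightarrow 2)$, leaving the three conditions unlinked in the essential direction.
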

The equivalence $(1)\Leftrightarrow(2)$ and $(2)\Leftrightarrow(3)$ were proved
in~\cite{CurtoF91} and~\cite{Bol96}, respectively.

Moreover, it was shown in~\cite{Bol96} that if the conditions 1)--3) in
Theorem~\ref{th:6.1} fail to hold then one can replace the right lower block $s_{2n}$ in
the matrix $S_n$ by ${s}'_{2n}$ in such a way that the new matrix
${S_n}'=({s'_{i+j}})_{i, j=0}^n$ with ${s_i}'=s_i$ for $i\leq 2n-1$ satisfies 1)-3) in
Theorem~\ref{th:6.1} and the sets $\cZ(S_n)$ and $\cZ({S'_n})$ coincide.

In what follows it is supposed that $S_n$ satisfies the assumptions 1)-3) of
Theorem~\ref{th:6.1}. We will need also the following statement from~\cite{Bol96}.
\begin{lemma}\label{Pseudo}
    Let a block Hankel matrix $S_n=(s_{i+j})_{i,j=0}^n$ satisfy the assumptions of
    Theorem~\ref{th:6.1} and let the matrix $T\in\dC^{N\times N}$ $(N=(n+1)d)$ be given by
\[
T=\begin{bmatrix}
0_d & I_d & & \\
    & \ddots &\ddots & \\
    &     & \ddots & I_d \\
    &   &   & 0_d
\end{bmatrix}.
\]
Then there exists a matrix $X=X^*\in\dC^{N\times N}$ of $\rank X=\rank S_n$  such that
\begin{equation}\label{XQ}
    XS_nX=X,\,\,S_nXS_n=S_n,\,\, T\ran X\subseteq\ran X.
\end{equation}
\end{lemma}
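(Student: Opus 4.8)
The plan is to recognize the three requirements on $X$ as saying that $X$ is a Hermitian reflexive generalized inverse of $S_n$ whose range is $T$-invariant, and then to reduce everything to the single geometric problem of producing a $T$-invariant complement of $\ker S_n$.

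First I would record a purely algebraic reduction. The identities $S_nXS_n=S_n$ and $XS_nX=X$ are exactly the conditions that $X$ be a reflexive generalized inverse of $S_n$, and for any such $X$ the rank identity is automatic, since $\rank S_n=\rank(S_nXS_n)\le\rank(XS_n)\le\rank X=\rank(XS_nX)\le\rank(S_nX)\le\rank S_n$. Next I would show that a reflexive generalized inverse is produced by, and is essentially determined by, a choice of its range: if $\cR$ is any subspace with $\dC^{N}=\cR\dotplus\ker S_n$, then $S_n|_{\cR}\colon\cR\to\ran S_n$ is a bijection (as $\cR\cap\ker S_n=\{0\}$ and $\dim\cR=\rank S_n=\dim\ran S_n$), one also has $\dC^{N}=\ran S_n\dotplus\cR^{\perp}$, and the operator $X$ defined by $X=(S_n|_{\cR})^{-1}$ on $\ran S_n$ and $X=0$ on $\cR^{\perp}$ is a reflexive generalized inverse with $\ran X=\cR$. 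The key point, using only $S_n=S_n^{*}$, is that this $X$ is automatically Hermitian: writing $y_i=S_nr_i+w_i$ with $r_i\in\cR$, $w_i\in\cR^{\perp}$, one gets $(Xy_1,y_2)=(r_1,S_nr_2)=(S_nr_1,r_2)=(y_1,Xy_2)$ since $r_1\perp w_2$ and $r_2\perp w_1$. Thus the lemma is reduced to the assertion that $\ker S_n$ admits a $T$-invariant complement $\cR$; taking $X$ as above then yields all three identities together with $T\ran X=T\cR\subseteq\cR=\ran X$.

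The heart of the proof is therefore the construction of a $T$-invariant complement of $\ker S_n$, and this is where the hypotheses of Theorem~\ref{th:6.1} must enter; in general $\ran S_n$ itself will not serve, because it need not be $T$-invariant. Already for $n=1$, $d=1$ and a rank-one $S_1$ with $s_1\ne0$ one has $\ran S_1=\span\,\col(s_0,s_1)$ while $T\col(s_0,s_1)=\col(s_1,0)\notin\ran S_1$. The mechanism governing the failure is the Hankel commutator identity
\[
TS_n-S_nT^{*}=bE_n^{*}-E_nb^{*},\qquad b=\col(s_{n+1},\dots,s_{2n},0)\in\dC^{N\times d},
\]
where $E_n\colon\dC^{d}\to\dC^{N}$ embeds onto the last block; the entire obstruction to $T$-invariance is concentrated in the boundary vector $b$, which is exactly the object constrained by condition (2) of Theorem~\ref{th:6.1}. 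I would then use the equivalent condition (3): a nonnegative block Hankel extension $\widetilde S=(s_{i+j})_{i,j=0}^{n+1}$ realizes $S_n$ and $T$ as the compressions $P_N\widetilde S P_N^{*}$ and $P_N\widetilde T P_N^{*}$ of the one-size-larger data, and it provides the flat, minimal structure in which a $T$-invariant complement exists. Concretely I would build $\cR$ as a $T$-invariant staircase complement: in the scalar case it is the space $\ker T^{\,r}$ of polynomials of degree $<r$, $r=\rank S_n$, which is $T$-invariant and, by flatness, transverse to $\ker S_n$; in the matrix case one selects an analogous subspace from the chain $\ker T\subset\ker T^{2}\subset\cdots$ adapted to the block-column ranks of $S_n$.

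The main obstacle is precisely this construction: proving that the exactness hypotheses of Theorem~\ref{th:6.1} force the existence of a $T$-invariant subspace of dimension $\rank S_n$ meeting $\ker S_n$ trivially. Once such $\cR$ is in hand the remainder is routine: $S_nXS_n=S_n$, $XS_nX=X$, $X=X^{*}$ and $\rank X=\rank S_n$ all follow from the algebraic reduction of the second paragraph, and $T\ran X\subseteq\ran X$ is immediate from the $T$-invariance of $\cR$. The transversality $\cR\cap\ker S_n=\{0\}$ is the only point at which the moment-theoretic content enters, and I would verify it through the flat-extension rank count together with the commutator identity above.
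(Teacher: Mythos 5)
Your opening reduction is correct, and it checks out in full: the identities \eqref{XQ} together with $X=X^*$ and the rank condition are equivalent to exhibiting a subspace $\cR$ with $\dC^N=\cR\dotplus\ker S_n$ and $T\cR\subseteq\cR$; your recipe ($X=(S_n|_{\cR})^{-1}$ on $\ran S_n$, $X=0$ on $\cR^{\perp}$, using $\dC^N=\ran S_n\dotplus\cR^{\perp}$) does produce a Hermitian reflexive generalized inverse with $\ran X=\cR$, and the commutator identity $TS_n-S_nT^*=bE_n^*-E_nb^*$ is also right. Note that the paper itself gives no proof of this lemma at all --- it is quoted from \cite{Bol96} --- so there is no in-paper argument to match yours against; judged on its own terms, however, your proposal has a genuine gap.

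The gap sits exactly where you locate it, and it is not closed. Since the existence of a $T$-invariant complement of $\ker S_n$ is, by your own reduction, \emph{equivalent} to the lemma, everything hangs on your third paragraph, and there you prove only the scalar case: for $d=1$, exactness gives a representing measure, a nonzero polynomial of degree $<r=\rank S_n$ cannot vanish on the (at least $r$ points of the) support, hence $\ker T^r\cap\ker S_n=\{0\}$ --- fine. For $d>1$, the case the paper actually needs, the sentence about selecting ``an analogous subspace from the chain $\ker T\subset\ker T^2\subset\cdots$'' is not a proof: the members of that chain have dimensions $kd$, while $\rank S_n$ need not be a multiple of $d$, so one must pass to staircase subspaces $\cV_0\supseteq\cV_1\supseteq\cdots\supseteq\cV_n$ (which are $T$-invariant coordinatewise) or to still more general invariant subspaces of the block shift, and for any such candidate the transversality with $\ker S_n$ is a genuine claim, not an analogy. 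What it requires --- and what you never extract from the hypotheses --- is the recursive, shift-stability structure of $\ker S_n$ carried by condition 2) of Theorem~\ref{th:6.1}: for an exact representing measure, $h\in\ker S_n$ with vanishing top block implies $T^*h\in\ker S_n$ (multiplication of the vector polynomial by the variable stays in the kernel below top degree), which makes the leading-coefficient subspaces of $\ker S_n$ increase with degree and lets one take for $\cV_k$ their orthogonal complements, the dimension count and transversality then following from a leading-coefficient (echelon) argument. The commutator identity alone, which is the only way you propose to bring condition 2) into play, does not deliver this kernel invariance. As written, the block-matrix case of the lemma --- its actual content --- is asserted by analogy with $d=1$ rather than proved, so the proposal amounts to a correct equivalent reformulation plus a proof of its scalar instance.
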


Let $\cX$ be the space of vector polynomials $h(X)$ of the form~\eqref{eq:6.0} of formal
degree  $n$ and let the form $K(\cdot,\cdot)$ be given by~\eqref{eq:6.0K}. Define the
operators $B_1,B_2:\cX\to\cX$ and $C_1,C_2:\cX\to\cL$ by~\eqref{eq:6.3H}. Then the data
set $(B_1,B_2,C_1,C_2,K)$ satisfies the assumption (A1). Choosing the basis $1, X,\dots,
X^n$ in $\cX$ one can identify $\cX$ with $\dC^{N}$ and then the form $K(\cdot,\cdot)$
is given by
\[
K(h,g)=(S_nh,g),\quad h,g\in\dC^N,\quad N={(n+1)d}.
\]
The operators $B_1,B_2$ and $C_1,C_2$ can be identified with their matrix
representations in this basis
\begin{equation}\label{eq:6.5}
B_1=T, \quad B_2=I_{N},
\end{equation}
\begin{equation}\label{eq:6.4}
C_1=\begin{bmatrix}0 & s_0 &\ldots & s_{n-1}\end{bmatrix}, \quad C_2=\begin{bmatrix}-I_d
&0 &\ldots & 0\end{bmatrix}.
\end{equation}
Then $B_2-\lambda B_1=I_{N}-\lambda T$ is invertible for all
$\lambda\in\dC\setminus\{0\}$
\begin{equation}
G(\lambda)=
\begin{bmatrix}
C_1\\ C_2
\end{bmatrix}(I_{N}-\lambda T)^{-1}=
\begin{bmatrix}
C_1\\ C_2
\end{bmatrix}
\begin{bmatrix}
I_d &\dots & \lambda^n I_d\\
    & \ddots     & \vdots \\
    &            & I_d
\end{bmatrix}.
\end{equation}
Since the operators $B_1=T$, $B_2=I_N$ satisfy the assumption (U) and $\ran C_2=\cL$ the
mapping $F:\cX\to\cH(\varphi,\psi)$ corresponding to the solution $\{\varphi,\psi\}$ of
the  $AIP(B_1,B_2,C_1,C_2,K)$ is uniquely defined and any solution $\{\varphi,\psi\}$ of
the  AIP is equivalent to a pair $\{I_d, m(\lambda)\}$, where $m\in N^{d\times d}$. The
corresponding AIP can be formulated as follows:

Find a Nevanlinna mvf $m\in N^{d\times d}$ such that:
\begin{enumerate}
\item[(C1)] $F h=\begin{bmatrix} I_d
&-m(\lambda)\end{bmatrix}G(\lambda)h\in \cH(m)$ for all $h\in\cX$;
\item[(C2)] $\|Fh\|_{\cH(m)}^2\leq(S_n h,h) $ for all $h\in\cX$.
\end{enumerate}

\begin{proposition}\label{pr:6.2}
Let $m$ be a solution of the $AIP(B_1,B_2,C_1,C_2,S_n)$. Then $m$ admits the integral
representation
\begin{equation}\label{eq:6.11}
m(\lambda)=\int_{-\infty}^\infty\frac{d\sigma(t)}{t-\lambda}
\end{equation}
where $\sigma\in \cZ(S_n)$. Conversely, if $\sigma\in \cZ(S_n)$, then $m$ is a solution
of the AIP.
\end{proposition}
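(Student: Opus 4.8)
The plan is to follow closely the argument used for the full Hamburger problem in Theorem~\ref{HamAIP}: compute $Fh$ explicitly on monomials, read off the moment constraints, and then handle the two features special to the truncated case. The polynomial space now has finite formal degree $n$, so (C1) by itself will match only the lower moments, and the upper constraint must be extracted from (C2). Throughout I write $\widetilde s_k=\int t^k\,d\sigma(t)$ once the integral representation is available; the goal is to prove $\widetilde s_k=s_k$ for $k=0,\dots,2n-1$ and $\widetilde s_{2n}\le s_{2n}$, which is precisely the statement $\sigma\in\cZ(S_n)$.

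First I would establish the integral representation of $m$. Exactly as in \eqref{eq:6.10}, for a monomial $h=uX^j$ $(0\le j\le n)$ one computes $(Fh)(\lambda)=(\lambda^j m(\lambda)+\lambda^{j-1}s_0+\cdots+s_{j-1})u$. Taking $j=0$ and $j=1$ and invoking Lemma~\ref{lm:2.}\,(i), membership of $Fh$ in $\cH(m)$ forces $m(\lambda)=O(1)$ and then $\lambda m(\lambda)+s_0=O(1)$, whence $m(\lambda)=O(1/\lambda)$ $(\lambda\wh\to\infty)$. A Nevanlinna mvf with this decay lies in $N_0^{d\times d}$, so $m$ admits the representation \eqref{eq:6.11}, and Lemma~\ref{lm:2.}\,(ii) then gives $f(\lambda)=O(1/\lambda)$ for every $f\in\cH(m)$. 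Substituting the expansion $m(\lambda)\sim-\sum_{k\ge0}\widetilde s_k\lambda^{-k-1}$ into the formula for $(FuX^j)$ and imposing this $O(1/\lambda)$ decay for each $j\le n$ annihilates the nonnegative powers of $\lambda$ and yields $\widetilde s_k=s_k$ for $k=0,\dots,n-1$. With these equalities one checks the identity $(FuX^j)(\lambda)=\int t^j(t-\lambda)^{-1}\,d\sigma(t)\,u$ for all $j\le n$, so by linearity $(Fh)(\lambda)=\int(t-\lambda)^{-1}\,d\sigma(t)h(t)$ for every $h\in\cX$, and the inner-product formula of \cite[Theorem~2.5]{AG04} gives $\|Fh\|^2_{\cH(m)}=\int(d\sigma(t)h(t),h(t))=\sum_{j,k=0}^n(\widetilde s_{j+k}h_j,h_k)$.

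The heart of the matter is to promote these low-order equalities to the full set of truncated constraints using (C2). Comparing the last formula with \eqref{eq:6.0K}, condition (C2) reads $\sum_{j,k}(\widetilde s_{j+k}h_j,h_k)\le\sum_{j,k}(s_{j+k}h_j,h_k)$ for all $h$, i.e. the block Hankel matrix $D:=(s_{i+j}-\widetilde s_{i+j})_{i,j=0}^n$ is nonnegative. Writing $\Delta_k=s_k-\widetilde s_k$, we already know $\Delta_k=0$ for $k\le n-1$, and $D\ge0$ gives at once that its $(n,n)$ diagonal block $\Delta_{2n}\ge0$, i.e. the bound $\widetilde s_{2n}\le s_{2n}$ required in \eqref{eq:6.2}. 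To obtain $\Delta_k=0$ for $n\le k\le 2n-1$ I would argue by induction on $k$: assuming $\Delta_\ell=0$ for all $\ell<k$, choose $a=k-n$ and $b=n$, so that $a+b=k$, $0\le a,b\le n$, and $2a=2(k-n)\le k-1$; then the principal $2\times2$ block of $D$ indexed by $a,b$ equals $\left[\begin{smallmatrix}\Delta_{2a}&\Delta_k\\\Delta_k&\Delta_{2b}\end{smallmatrix}\right]=\left[\begin{smallmatrix}0&\Delta_k\\\Delta_k&\Delta_{2b}\end{smallmatrix}\right]\ge0$, and a nonnegative Hermitian block matrix with a vanishing diagonal block has vanishing off-diagonal block, so $\Delta_k=0$. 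This yields $\widetilde s_k=s_k$ for $k=0,\dots,2n-1$, i.e. \eqref{eq:6.1}, and hence $\sigma\in\cZ(S_n)$. I expect this Hankel positivity step to be the main obstacle, since it is exactly where the truncated structure departs from the full problem; one must verify that a zero diagonal block is always available (the choice $a=k-n$ does this uniformly, including the degenerate case $n=1$).

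For the converse, given $\sigma\in\cZ(S_n)$ put $m(\lambda)=\int(t-\lambda)^{-1}d\sigma(t)$. Since $\widetilde s_k=s_k$ for $k\le 2n-1$, the computation above again gives $(Fh)(\lambda)=\int(t-\lambda)^{-1}d\sigma(t)h(t)$, which lies in $\cH(m)$ by \cite[Theorem~2.5]{AG04}; this is (C1). The same inner-product formula gives $\|Fh\|^2_{\cH(m)}=\sum_{j,k}(\widetilde s_{j+k}h_j,h_k)$, so $(S_nh,h)-\|Fh\|^2_{\cH(m)}=\sum_{j,k}(\Delta_{j+k}h_j,h_k)=(\Delta_{2n}h_n,h_n)\ge0$, because $\Delta_k=0$ for $k\le2n-1$ and $\Delta_{2n}\ge0$; this is (C2). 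Hence $m$ solves the AIP, which completes the equivalence.
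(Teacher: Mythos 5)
Your proof is correct and follows essentially the same route as the paper's: the monomial computation of $(Fh)(\lambda)$ from \eqref{eq:6.10}, the asymptotic argument via Lemma~\ref{lm:2.} giving the integral representation and the lower moment equalities, the norm identity of \cite[Theorem~2.5]{AG04}, the reduction of (C2) to the block Hankel inequality $S_n^{(m)}\le S_n$, and the same converse. The only differences are minor: you spell out the $2\times 2$ principal-submatrix induction behind the implication that $S_n^{(m)}\le S_n$ together with $s_j^{(m)}=s_j$ $(j\le n-1)$ forces \eqref{eq:6.1} and \eqref{eq:6.2} (a step the paper merely asserts), while your phrase about substituting the expansion $m(\lambda)\sim-\sum_{k}\widetilde s_k\lambda^{-k-1}$ tacitly presumes that the moments of $\sigma$ exist---strictly one should deduce their existence and the equalities from the asymptotic \eqref{eq:6.12} via the Hamburger--Nevanlinna theorem, which is exactly what the paper does, with the same brevity.
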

\begin{proof}
{\it Necessity.} The same arguments as in the proof of Theorem~\ref{HamAIP} show that
(C1) implies
\begin{equation}\label{eq:6.12}
    m(\lambda)+\frac{s_0}{\lambda}+\dots+\frac{s_{n-1}}{\lambda^n}=O(\frac{1}{\lambda^{n+1}})\quad
(\lambda\wh\to\infty).
\end{equation}
Let $m(\lambda)$ have the integral representation \eqref{eq:6.11} and let us set
\begin{equation}\label{eq:6.1m}
s_j^{(m)}=\int_\dR t^j d\sigma(t) \quad (j=0, 1, \ldots, 2n).
\end{equation}
It follows from~\eqref{eq:6.12} that
\begin{equation}\label{eq:6.131}
    s_j^{(m)}=s_j
 \mbox{ for } j=0,1,\dots,n-1.
\end{equation}
The rest of the equalities~\eqref{eq:6.1} and the inequality~\eqref{eq:6.2} are implied
by the condition (C2).

Let us show that for every polynomial $ h(X)=\sum_{j=0}^n u_jX^j,\quad u_j\in\dC^{d}$,
the following equality holds
\begin{equation}\label{eq:6.14}
    \|F h\|^2_{\cH(m)}=\sum_{j,k=0}^n u_k^*s_{j+k}^{(m)}u_j.
\end{equation}
Indeed, it follows from~\eqref{eq:6.10} that
\begin{equation}\label{eq:6.141}
\begin{split}
(F h)(\lambda)&=
\begin{bmatrix}
m(\lambda)&\lambda m(\lambda)+s_0&\dots &\lambda^n
m(\lambda)+\lambda^{n-1}s_0+\dots+s_{n-1}
\end{bmatrix}u\\
&=\begin{bmatrix} {\displaystyle\int_\dR\frac{d\sigma(t)}{t-\lambda}}&
{\displaystyle\int_\dR\frac{td\sigma(t)}{t-\lambda}}&\dots &
{\displaystyle\int_\dR\frac{t^nd\sigma(t)}{t-\lambda}}
\end{bmatrix}u\in\cH(m),
\end{split}
\end{equation}
where $u=\col(u_0,u_1,\ldots,u_n)\in\dC^{(n+1)d}$. Due to~\cite[Theorem 2.5]{AG04}
\begin{equation}\label{eq:6.15}
    \|F h\|^2_{\cH(m)}=u^*S_n^{(m)}u,
\end{equation}
where
\begin{equation}\label{eq:6.16}
    S_n^{(m)}=\left[s_{i+j}^{(m)}\right]_{i,j=0}^n.
\end{equation}
Then the inequality $S_n^{(m)}\le S_n$ in (C2) and~\eqref{eq:6.131} imply
that~\eqref{eq:6.1} and~\eqref{eq:6.2} hold.

{\it Sufficiency.} Let $\sigma\in\cZ(S_n)$ and let $m$ be defined by~\eqref{eq:6.11}.
Then it follows from~\eqref{eq:6.141} that (C1) holds.

The condition (C2) is implied by~\eqref{eq:6.15}, \eqref{eq:6.1} and \eqref{eq:6.2},
since
\[
 \|Fh\|^2_{\cH(m)}=u^*S_n^{(m)}u\le u^*S_nu,\quad u\in\dC^{N}.
\]
\end{proof}
In the regular case (when $\det S_n\ne 0$) the solution matrix $\Theta(\lambda)$ can be
calculated by~\eqref{eq:Theta}. Since $C_1^+=S_n^{-1}C_1^*$ and $C_2^+=S_n^{-1}C_2^*$
one obtains from~\eqref{eq:Theta} that
\[
\Theta(\lambda)=I_{2d}-\lambda\left[%
\begin{array}{c}
  C_1 \\
  C_2 \\
\end{array}%
\right](I_\cH-\lambda T)^{-1}S_n^{-1}\begin{bmatrix} C_2^* & - C_1^*\end{bmatrix}.
\]
Then by Corollary~\ref{mdescr} the formula \eqref{MPdescr} establishes the 1-1
correspondence between the set of all solutions $\sigma\in\cZ(S_n)$ of the truncated
moment problem~\eqref{eq:6.1}-\eqref{eq:6.2} and the set of all equivalence classes of
Nevanlinna pairs $\{p,q\}\in\wt N^{d\times d}$.

In the singular case ($\det S_n= 0$) let us consider the matrix $X\in\dC^{N\times N}$
which satisfies~\eqref{XQ}. Then the decomposition
\[
\cX=\ran X\dotplus\ker S_n
\]
satisfies the assumptions (A2), (A3), since $T\ran X\subseteq \ran X$, and the solution
matrix $\Theta(\lambda)$ takes the form~\eqref{eq:Theta}. Now, let us calculate the
operators $C_1^+,C_2^+:\cL\to\cH$. For arbitrary $h=Xg\in\ran X$, $u\in\cL$ and $j=1,2$
one obtains
\[
(C_jXg,u)_{\dC^d}=(Xg,C_j^*u)_{\dC^N}=(Xg,S_nXC_j^*u)_{\dC^N}=(Xg,XC_j^*u)_{\cH}.
\]
Therefore, $C_1^+=XC_1^*,\,\,C_2^+=XC_2^*$, and the mvf $\Theta(\lambda)$ takes the form
\begin{equation}\label{eq:4.333}
\Theta(\lambda)=\left(I_{\cL\oplus\cL}-\lambda\left[%
\begin{array}{c}
   C_1 \\
   C_2 \\
\end{array}%
\right](I_\cH-\lambda T)^{-1}X\begin{bmatrix} C_2^* & -C_1^*\end{bmatrix}\right)V,
\end{equation}
where $V\in\dC^{2d\times 2d}$ is a unitary matrix such that
\[
V(\{0\}\times \dC^\nu)\!=\begin{bmatrix}C_1\\-C_2\end{bmatrix}\ker
S_n=\begin{bmatrix}0&s_1&\dots &s_{n-1}\\I_d &0 &\dots & 0\end{bmatrix}\ker S_n
\]
and
\[
\nu=\dim\begin{bmatrix}0&s_1&\dots &s_{n-1}\\-I_d &0 &\dots & 0\end{bmatrix}\ker S_n.
\]
Then by Corollary~\ref{mdescr} the formula \eqref{MPdescr} establishes the 1-1
correspondence between the set of all solutions $\sigma\in\cZ(S_n)$ of the truncated
moment problem~\eqref{eq:6.1}-\eqref{eq:6.2} and the set of all equivalence classes of
Nevanlinna pairs $\{p,q\}\in\wt N^{d\times d}$ of the form~\eqref{pq}.

\end{document}